\newtheorem{assumption}{Assumption}
\def\qed{ \ \vrule width.2cm height.2cm depth0cm\smallskip}
\newenvironment{proof}{\noindent {\bf Proof.\/}}{$\qed$\vskip 0.1in}
\newcommand{\we}{\wedge}
\newcommand{\ol}{\overline}
\newcommand{\ul}{\underline}
\newcommand{\ba}{\begin{array}}
\newcommand{\ea}{\end{array}}
\newcommand{\be}{\begin{equation}}
\newcommand{\ee}{\end{equation}}
\newcommand{\bea}{\begin{eqnarray}}
\newcommand{\eea}{\end{eqnarray}}
\newcommand{\beaa}{\begin{eqnarray*}}
\newcommand{\eeaa}{\end{eqnarray*}}
\def\dbE{\mathbb{E}}
\def\dbF{\mathbb{F}}
\def\dbH{\mathbb{H}}
\def\dbN{\mathbb{N}}
\def\dbP{\mathbb{P}}
\def\dbR{\mathbb{R}}
\def\dbS{\mathbb{S}}
\def\dbQ{\mathbb{Q}}
\def\a{\alpha}
\def\b{\beta}
\def\g{\gamma}
\def\d{\delta}
\def\e{\varepsilon}
\def\si{\sigma}
\def\t{\tau}
\def\f{\varphi}
\def\th{\theta}
\def\o{\omega}
\def\G{\Gamma}
\def\D{\Delta}
\def\L{\Lambda}
\def\O{\Omega}
\def\cA{{\cal A}}
\def\cC{{\cal C}}
\def\cD{{\cal D}}
\def\cE{{\cal E}}
\def\cF{{\cal F}}
\def\cH{{\cal H}}
\def\cI{{\cal I}}
\def\cJ{{\cal J}}
\def\cO{{\cal O}}
\def\cP{{\cal P}}
\def\cQ{{\cal Q}}
\def\cR{{\cal R}}
\def\cS{{\cal S}}
\def\cT{{\cal T}}
\def\ch{\textsc{h}}
\def\no{\noindent}
\def\ms{\medskip}
\def\q{\quad}
\def\pa{\partial}
\def\cd{\cdot}
\def\cds{\cdots}
\def\qed{ \hfill \vrule width.25cm height.25cm depth0cm\smallskip}
\newcommand{\basa}{\begin{assumption}}
\newcommand{\easa}{\end{assumption}}
\newcommand{\bas}{\begin{assum}}
\newcommand{\eas}{\end{assum}}
\def\limsup{\mathop{\overline{\rm lim}}}
\def\pa{\partial}
 \def\cd{\cdot}
\def\cds{\cdots}
\def\1{{\bf 1}}
\def\:{\!:\!}
\def\reff#1{{\rm(\ref{#1})}}
\def \proof{{\noindent \bf Proof\quad}}
\begin{document}

\newtheorem{thm}{Theorem}[section]
\newtheorem{lem}[thm]{Lemma}
\newtheorem{cor}[thm]{Corollary}
\newtheorem{prop}[thm]{Proposition}
\newtheorem{rem}[thm]{Remark}
\newtheorem{eg}[thm]{Example}
\newtheorem{defn}[thm]{Definition}
\newtheorem{assum}[thm]{Assumption}

\renewcommand {\theequation}{\arabic{section}.\arabic{equation}}
\def\thesection{\arabic{section}}

\numberwithin{equation}{section}
\numberwithin{thm}{section}

\title{Viscosity Solutions of Fully Nonlinear Elliptic Path Dependent Partial Differential Equations}

\author{Zhenjie {\sc Ren}\footnote{CMAP, Ecole Polytechnique, 91128 Palaiseau Cedex France. Email: ren@cmap.polytechnique.fr. The author is grateful for the support of the grant from RDM-IdF.}}

\maketitle

\begin{abstract}
This paper extends the recent work on path-dependent PDE's to elliptic equations with Dirichlet boundary conditions. We propose a notion of viscosity solution in the same spirit as \cite{ETZ2,ETZ3}, relying on the theory of optimal stopping under nonlinear expectation. We prove a comparison result implying the uniqueness of viscosity solution, and the existence follows from a Perron-type construction using path-frozen PDE's. We also provide an application to a time homogeneous stochastic control problem motivated by an application in finance.
\end{abstract}

\vspace{5mm}

\noindent{\bf Key words:} Viscosity solutions, optimal stopping, path-dependent PDE's, comparison principle, Perron's approach.

\vspace{5mm}

\noindent{\bf AMS 2000 subject classifications:}  35D40, 35K10, 60H10, 60H30.

\vfill\eject

\section{Introduction}

In this paper, we develop a theory of viscosity solutions of elliptic PDE's on the continuous path space, by extending the recent literature on path-dependent PDE's (PPDE) to this context. 

Nonlinear PPDE's appear in various applications, for example, non-Markovian stochastic control problems are naturally related to path-dependent Hamilton-Jacobi-Bellman equations (see \cite{ETZ2}), and non-Markovian stochastic differential games are related to path-dependent Isaacs equations (see \cite{PZ}). PPDE's are also intimately related to the backward stochastic differential equations introduced by Pardoux and
Peng \cite{PP}, and their extension to the second order in \cite{CSTV,STZ}. We refer to the survey paper \cite{RTZ-survey} as an introduction to this new topic. We also refer to the recent applications in \cite{HTT} to establish a representation of the solution of a class of PPDE's in terms of branching diffusions,
and to \cite{MRTZ} for the small noise large deviation results of path-dependent diffusions.

In the existing literature, the authors are all focus on developing the wellposedness theory for parabolic PPDE's. In this paper, we explorer the notion of elliptic PPDE. An elliptic PPDE on the continuous path space $\O$ is of the form:
\be\label{ppde-intro}
G(\cd, u, \pa_\o u, \pa^2_{\o\o}u)(\o)=0,~ \o\in\cQ \subset \O,
\q\mbox{and}\q
u(\o)=\xi(\o),~\o\in\pa \cQ.
\ee
Our notions of the derivatives $\pa_\o$ and $\pa^2_{\o\o}$
are inspired by the calculus developed in Dupire \cite{Dupire} as well as in Cont and Fournie \cite{CF}. Let 
\beaa
\O^e:=\{\o\in\O: \o=\o_{t\we\cd}~\mbox{for some}~t\in\dbR^+\}
\q\mbox{and}\q u:\O^e\rightarrow \dbR,
\eeaa
i.e. $\O^e$ is the subspace of all the paths with flat tails. Denote by $\{u_t\}_{t\in\dbR^+}$ the process $u_t(\o):=u(\o_{t\we\cd})$. According to \cite{Dupire, CF},  one may define the horizontal and vertical derivatives for the process
\be\label{derv Dupire}
\pa_t u_t(\o):=\lim_{h\rightarrow 0}\frac{u_{t+h}(\o_{t\we\cd})-u_t(\o)}{h}\q\mbox{and}\q
\pa_\o u_t(\o):=\lim_{h\rightarrow 0}\frac{u_t(\o)-u_t(\o_\cd+h1_{[t,\infty)})}{h}.
\ee
Also, in \cite{Dupire, CF} the authors proved that a {\it smooth} process satisfies the functional It\^o formula:
\be\label{formal Ito}
d u_t=\pa_t u~ dt + \pa_\o u ~d\o_t +\frac12\pa^2_{\o\o}u ~d\langle \o\rangle_t,~\dbP\mbox{-a.s. for all continuous semimartingale measures}~\dbP.
\ee
Note that in the definition \eqref{derv Dupire} one requires to extend the process $u$ to the set of c\`adl\`ag paths. Although this technical difficulty is addressed and solved in \cite{CF}, it was observed by Ekren, Touzi and Zhang \cite{ETZ1} that it is more convenient to define the derivatives by the It\^o decomposition \eqref{formal Ito}, namely, we call the continuous processes $\L,Z,\G$ the derivatives of the process $u$ if
\beaa
d u_t = \L_t~ dt + Z_t ~d\o_t +\frac12\G_t~d\langle \o\rangle_t,~\dbP\mbox{-a.s. for all continuous semimartingale measures}~\dbP.
\eeaa
In this paper, we follow this idea to define the path derivatives (see Definition \ref{def: derivatives for Omega e} below).
We next restrict our solution space so that all potential solutions $u$ of elliptic PPDE \eqref{ppde-intro} agree with the time-independence property, i.e. $\pa_t u=0$. A function $u:\O^e\rightarrow\dbR$ is called to be time-invariant, if
\beaa
u(\o)=u\big(\o_{\ell(\cd)}\big)\q\mbox{for all}~\o~\mbox{and all}~\mbox{increasing bijection}~\ell:\dbR^+\rightarrow\dbR^+,
\eeaa
i.e. the value of a time-invariant function $u$ is unchanged by any time scaling of path. 
It follows from the definition of the horizontal derivative in \eqref{derv Dupire} that $\pa_t u=0$. Therefore, the time-invariance implies the time-independence, and in this paper we will prove the wellposedness of time-invariant solutions to PPDE \eqref{ppde-intro}.

It is noteworthy that the elliptic PPDE \eqref{ppde-intro} can reduce to be an elliptic PDE (on the real space). Assume that the nonlinearity $G$ in \eqref{ppde-intro} has no dependence on $\o$, $u:\O^e\rightarrow\dbR$ is a smooth solution to \eqref{ppde-intro}, and that there is a function $v:\dbR^d\rightarrow\dbR$ such that $u(\o) = v(\o_\infty)$ for all $\o\in\O^e$. It follows that the path derivatives reduce to the normal derivatives in the real space, i.e. $\pa_\o u(\o) = \pa_x v(\o_\infty),~ \pa^2_{\o\o} u(\o) = \pa^2_{xx} v(\o_\infty)$. Then the function $v$ satisfies the corresponding elliptic PDE:
\be\label{pde-intro}
-G(v,\pa_x v,\pa^2_{xx} v) =0.
\ee
There is an enormously rich literature studying the elliptic PDE \eqref{pde-intro}. In particular, it is known that the solutions to the Dirichlet problem of the equation \eqref{pde-intro} are not always classical (i.e. smooth enough). For example, Nadirashvili and Vladut constructed in \cite{NV} a singular solution to an equation $-G(\pa^2_{xx}v)=0$, where $G$ satisfies the uniform ellipticity condition. A type of weak solutions, viscosity solutions, was introduced by Crandall and Lions \cite{CL} to study the equations like the one \eqref{pde-intro}, and turns out to be very useful. Since the PDE \eqref{pde-intro} is a special case of the PPDE \eqref{ppde-intro}, we are motivated to develop a theory of viscosity solutions to elliptic PPDE's. 

In this paper, we give a definition of viscosity solutions in the context of elliptic PPDE, and then prove the existence and uniqueness of bounded, uniformly continuous and time-invariant viscosity solutions to the PPDE \eqref{ppde-intro} under certain conditions. We try to keep the structure of the paper close to that of Ekren, Touzi and Zhang \cite{ETZ3}, in which the authors studied the viscosity solutions to parabolic PPDE's. As in \cite{ETZ3}, our main idea is to construct a viscosity solution to \eqref{ppde-intro} by an approximation of piecewise smooth solutions provided by the path-frozen PDE's. Further, we prove the viscosity solution we construct is the unique one through a partial comparison result (i.e. the comparison between a viscosity subsolution and a piecewise smooth supersolution). There are new difficulties in the elliptic context, for example, we need to handle the boundary of Dirichlet problem (in particular, the discontinuity of the hitting time of the boundary $\ch_Q$), and we are not allow to apply certain changes of variables (e.g. $\tilde u_t := e^{rt} u_t$), which are quite convenient in the parabolic context. In particular, our argument to verify the uniform continuity of the constructed viscosity solution is new, and quite different from the argument in \cite{ETZ3}.
Since the path-frozen PDE's do not conserve the uniform continuity of the data of the problem, in \cite{ETZ3} the authors requires additional uniform continuity assumptions (see their Assumption 3.5) to ensure the uniform continuity of the constructed viscosity solution. Curiously, we observe  in the elliptic case that the solutions $\th^{\o,\e}$ to the path-frozen PDE's are `almost' (with an error $\e$) uniform continuous in the parameter $\o$, i.e.
\beaa
\big|\theta^{\omega^{1},\e}-\theta^{\omega^{2},\e}\big|
~\leq~ \e+ \rho(2\e)+C_\e\rho \big(d^{e}(\omega^{1},\omega^{2})\big), \q\mbox{for some modulus of continuity $\rho$}
\eeaa
 (see \eqref{eq:theta uni} below for the more accurate result), and this intermediate result leads to the uniform continuity of the constructed viscosity solution without any extra assumptions. By comparing to the parabolic context, we think the above property is {\it intrinsically elliptic}.

We also provide an application of elliptic PPDE to the problem of superhedging a time invariant derivative security under uncertain volatility model. This is a classical time homogeneous stochastic control problem motivated by the application in financial mathematics.

The rest of paper is organized as follows. Section \ref{sec:preliminary} introduces the main notations, as well as the notion of time-invariance, and recalls the result of optimal stopping under non-dominated measures. Section \ref{sec:ppde} defines the viscosity solution of the elliptic PPDE's. Section \ref{sec: main} presents the main results of this paper. In Section \ref{sec: comparison}, we prove the comparison result which implies the uniqueness of viscosity solutions. In Section \ref{sec: Existence} we verify that a function constructed by a Perron-type approach is an viscosity solution, so the existence follows. We present in Section \ref{sec: app} an application of elliptic PPDE in the field of financial mathematics. Finally, we complete some proofs in the appendix, Section \ref{sec: some proofs}.

\section{Preliminary}\label{sec:preliminary}
Let $\Omega:=\left\{ \omega\in C(\mathbb{R}^{+},\mathbb{R}^{d}):\omega_{0}=0\right\} $
be the set of continuous paths starting from the origin, $B$ be the
canonical process, $\mathbb{F} = \{\cF_t\}_{t\in\dbR^+}$ be the filtration generated by $B$,
 $\cT$ be the set of all $\dbF$-stopping times, and $\mathbb{P}_{0}$ be the Wiener measure. 

\no Denote the $L_\infty$-norm on the continuous path space $\O$ by
$\left\Vert \omega\right\Vert _{\infty}~:=~\sup_{s\leq \infty} |\omega_{s}|$.
 Introduce the concatenation of the continuous paths:
\be\label{def:concate}
(\omega\otimes_{t}\omega')(s):=\omega_{s}1_{[0,t)}(s)+(\omega_{t}+\omega'_{s-t})1_{[t,\infty)}(s) \q \mbox{for $\o,\o'\in\O$ and $s,t\in\dbR^+$}.
\ee
Given a random variable $\xi:\Omega\rightarrow\mathbb{R}$ and a process $X:\dbR^+ \times \O \rightarrow \dbR$, we define the shifted random variable and  the shifted process:
\beaa
\xi^{t,\omega}(\omega^{'}) ~:=~ \xi(\omega\otimes_{t}\omega^{'}),\q
X^{t,\omega}(s, \omega^{'}) ~:=~ X(t+s,\omega\otimes_{t}\omega^{'}).
\eeaa
For a $\t\in \cT$, we often write $\xi^{\t,\o}$ (resp. $X^{\t,\o}$) instead of $\xi^{\t(\o),\o}$ (resp. $X^{\t(\o),\o}$) for simplicity.

In this paper, we focus on a subset of $\O$ denoted by $\O^e$, which will be considered as the solution space of elliptic PPDE's. Define
\begin{center}
 $\O^e ~:=~ \left\{ \omega\in\Omega:\omega=\omega_{t\wedge\cdot}~\mbox{for some}~t\geq0\right\}$,\q i.e. the set of all paths with flat tails.
\end{center}
We denote the starting of the flat fail of a path $\o\in \O^e$ by
\beaa
\bar{t}(\omega) ~:=~ \min\left\{ t:\omega=\omega_{t\wedge\cdot}\right\} \q \text{for all}\q \omega\in\Omega^{e}.
\eeaa
Recall the definition of the concatenation in \eqref{def:concate}. For $\o\in\O^e$, $\o'\in\O$ and $\xi: \O\rightarrow\dbR$, we define
\beaa
(\omega\bar{\otimes}\omega')(s) ~:=~ (\omega\otimes_{\bar{t}(\omega)}\omega')(s)
\q\mbox{and}\q
\xi^{\omega}(\omega') ~:=~ \xi^{\bar{t}(\omega),\omega}(\omega') ~=~ \xi(\o\bar\otimes\o').
\eeaa

In our arguments, we will be interested in the subsets in $\O^e$ of some particular form.  Denote by 
\begin{center}
$\mathcal{R}$ the set of all open, bounded and convex subsets of $\mathbb{R}^{d}$ containing $0$.
\end{center}
We are interested in the subsets in $\O^e$ corresponding to $D\in\cR$:
\be\label{Drond}
\cD:=\{\o\in\O^e: \o_t\in D~\mbox{for all}~t\geq 0\}.
\ee
By defining the stopping time
\beaa
\ch_D:=\inf\{t\geq 0:\o_t\notin D\},\q  \mbox{and the set}\q \cH:=\{\ch_D:D\in\cR\},
\eeaa
we may further define the boundary and the cloture of $\cD$:
\beaa
\pa \cD:=\left\{ \omega\in\Omega^{e}:\bar{t}(\omega)=H_{D}(\omega)\right\},
\q 
\mathrm{cl}(\mathcal{D}):=\mathcal{D}\cup\partial\mathcal{D}.
\eeaa

Elliptic equations are devoted to model time-invariant phenomena, and in the path space the time-invariance property can be formulated mathematically as follows.
\begin{defn}
Define the distance on $\O^e$:
\beaa
d^e(\o,\o'):=\inf_{\ell \in\cI}\sup_{t\in\dbR^+}|\o_{\ell (t)}-\o'_t|,\q \mbox{for}\q \o,\o'\in\O^e,
\eeaa
where $\mathcal{I}$ is the set of all increasing bijections from
$\dbR^+$ to $\dbR^+$. We say $\o$ is equivalent to $\o'$, if $d^e(\o,\o')=0$. A function $u$ on $\O^e$ is time-invariant, if $u$ is well defined on the equivalent class, i.e.
$$u(\o)=u(\o')\q \mbox{whenever}\q d^e(\o,\o')=0.$$

For a subset $\cD \subset \O^e$, $C(\cD)$ denotes the set of all functions $\f : \cD \rightarrow \dbR$ continuous with respect to
$d^{e}(\cdot,\cdot)$. The notations $C\left(\cD ; \mathbb{R}^{d}\right)$,
$C\left(\cD ; \mathbb{S}^{d}\right)$ ($\mathbb{S}^{d}$ denotes the set of $d\times d$ symmetric matrices) are also used when we need to emphasize the space in which the functions take values. 

Finally, we say $u\in \mbox{\rm BUC}(\cD)$ if $u:\cD \rightarrow \dbR$ is bounded and uniformly continuous with respect to $d^e(\cdot,\cdot)$, i.e. there exists a modulus of continuity $\rho$ such that
\be\label{BUC}
\big|u(\o^1)-u(\o^2)\big| ~\le ~ \rho(d^e(\o^1,\o^2))\q\mbox{for all}~\o^1,\o^2\in \cD.
\ee
\end{defn}

\begin{rem}\label{concave_rho}
For any modulus of continuity $\rho$, the concave envelop $\hat \rho := {\rm conc}[\rho]$ is still a modulus of continuity for the same function. Thus, without loss of generality, we may assume that moduli of continuity are concave.
\end{rem}

\begin{eg}
Let us show an example of two equivalent paths of which the $L_\infty$-distance is large. Let $(t_i,x_i)\in \dbR^+\times\dbR^d$ for each $1\leq i\leq n$. We denote by 
\be\label{linearintepolate}
\o ~:=~ {\rm Lin}\big\{(0,0),(t_1,x_1),\cdots,(t_n,x_n)\big\}
\ee
the linear interpolation of the points with a flat tail extending to $t=\infty$ ($\o_t = x_n$, for $t\ge t_n$). Then by defining another path
\beaa
\o' ~:=~ {\rm Lin}\big\{(0,0),(t'_1,x_1),\cdots,(t'_n,x_n)\big\},
\eeaa
we clearly have $d^e(\o,\o')=0$ regardless of the choice of $\{t'_i\}_{1\le i\le n}$. However, the $L_\infty$-distance $\|\o-\o'\|_\infty$ can reach $\max_{1\le i,j\le n}|x_i-x_j|$ by choosing a particular sequence $\{t'_i\}_{1\le i\le n}$.
\end{eg}

\begin{eg}\label{eg:timeinv-func}
We show some examples of time-invariant functions:
\begin{itemize}
\item Markovian case: Assume that there exists $\bar u:\dbR^d\rightarrow\dbR$ such that $u(\o)=\bar u(\o_{\bar t(\o)})$. Since $\big|\o^1_{\bar t(\o^1)}-\o^2_{\bar t(\o^2)}\big|\leq d^e(\o^1,\o^2)$ for all $\o^1,\o^2\in\O^e$, $u$ is time-invariant.

\item Maximum dependent case: Assume that there exists $\bar u: \dbR\rightarrow\dbR$ such that $u(\o)=\bar u(\|\o\|_\infty)$. Note that $\|\o\|_\infty=d^e(\o,0)$ and $d^e(\o^1,0)-d^e(\o^2,0)\leq d^e(\o^1,\o^2)$. Thus, $\|\o^1\|_\infty=\|\o^2\|_\infty$ whenever $d^e(\o^1,\o^2)=0$. Consequently, $u$ is time-invariant.

\end{itemize}
\end{eg}

\no Here are some notations useful below:
\begin{itemize}
\item $O_L~:=~ \left\{ x\in\mathbb{R}^{d}:|x|< L\right\}$, and $\ol O_L ~:=~  \left\{ x\in\mathbb{R}^{d}:|x| \le L\right\}$;

\item  $\big[aI_d,bI_d\big]~:=~\big\{ \g \in \mathbb{S}_d : aI_d\leq \g \leq bI_d\big\}$;
\item $\mathbb{H}^{0}\left(E\right)$ denotes the set of all $\mathbb{F}$-progressively
measurable processes taking values in the set $E$, and in particular  $\mathbb{H}^{0}_L:=\mathbb{H}^0\Big(\big[\sqrt{2/L}I_d,\sqrt{2L}I_d\big]\Big)$ for $L>0$;

\item Denote the quadratic variation of the path $\o$ by $\langle \o \rangle_t := |\o_t|^2 - 2\int_0^t \o_s d\o_s$, where $\int_0^\cd \o_s d\o_s$ is the pathwise stochastic integral defined in Karandikar \cite{Kar};

\item Given $\g,\eta \in \dbS^d$, we define $\g:\eta ~:=~ {\rm Trace}[\g\eta] $.

\item Given a function $\f: \O\rightarrow\dbR^d$, we may define the corresponding process 
\be\label{rv2proc}
\f_t(\o) ~:=~ \f(\o_{t\we\cdot}).
\ee
\end{itemize}

\no We next introduce the {\it smooth functions} on the space $\O^e$. First, for every constant $L>0$, we denote by $\cP^L$ the collection of all continuous semimartingale measures $\dbP$ on $\O$ whose drift and diffusion belong to $\dbH^0(\ol O_L)$ and $\dbH^0_L$, respectively. More precisely, let $\tilde\O:=\O\times\O\times\O$ be an enlarged canonical space and $\tilde B:=(B,A,M)$ be the canonical process. A probability measure $\dbP\in\cP^L$ if there exists an extension $\dbQ^{\a,\b}$ of $\dbP$ on $\tilde \O$ such that:
\be\label{defn:PL}
\left.\ba{lll}
&B=A+M,\q A~\mbox{is absolutely continuous,}~M~\mbox{is a martingale,}&\\
&\|\a^\dbP\|_\infty\le L,~\b^\dbP\in \dbH^0_L,\q
\mbox{where}~\a^\dbP_t:=\frac{dA_t}{dt},~\b^\dbP_t:=\sqrt{\frac{d\langle M\rangle_t}{dt}},&
\ea\right.\dbQ^{\a,\b}\mbox{-a.s.}
\ee

\begin{rem}
The definition of $\cP^L$ is slightly different from the one in \cite{ETZ3}, since we urge that the coefficient of diffusion $\b^\dbP \ge \sqrt{\frac{2}{L}}I_d$.
\end{rem}

\no Further, denote $\cP^\infty:=\cup_{L>0}\cP^L$.

\begin{defn}[Smooth time-invariant processes]\label{def: derivatives for Omega e}
Let $D\in\cR$, and recall $\cD\subset \O^e$ defined in \eqref{Drond}.  We say $\f \in C^{2}(\cD)$,
if $\f \in C(\cD)$ and there exist $Z \in C\left(\cD;\mathbb{R}^{d}\right)$,
$\Gamma\in C\left(\cD;\mathbb{S}^{d}\right)$
such that 
\beaa
d \f_t ~=~ Z_t\cdot dB_{t}+\frac{1}{2}\Gamma_t:\left\langle B\right\rangle _{t}\q \mbox{for}~~t\leq \ch_D,\q
\cP^\infty\mbox{-q.s.}
\eeaa
($\f_t$ is defined in \eqref{rv2proc}), where $\cP^\infty$-q.s. means $\mathbb{P}$-a.s. for all $\dbP\in\cP^\infty$. By a direct localization argument, we see that the above $Z$
and $\Gamma$, if they exist, are unique. Denote $\partial_{\omega}u:=Z$ and $\partial_{\omega\omega}^{2}u:=\Gamma$.
\end{defn}

\begin{rem}
In the Markovian case mentioned in Example \ref{eg:timeinv-func}, if the function $\bar u:\dbR^d\rightarrow\dbR$ is in $C^2(D)$, then it follows from the It\^o's formula that $u\in C^2(\cD)$.
\end{rem}
\begin{rem}
In the path-dependent case, Dupire \cite{Dupire} defined derivatives, $\pa_t u$ and $\pa_\o u$, for process $u:\dbR^+\times \O\rightarrow\dbR^d$. In particular, the $t$-derivative is defined as:
$$\pa_t u(s,\o):=\lim_{h\rightarrow 0^+}\frac{u(s+h,\o_{s\we\cdot})-u(s,\o)}{h}.$$
Also, Dupire and other authors, for example \cite{CF}, proved the functional It\^o formula for the processes regular in Dupire's sense:
\beaa
du_s=\pa_t u_sds + \pa_\o u_s \cdot dB_s+\frac{1}{2}\pa^2_{\o\o}u_s:\left\langle B\right\rangle _s,\q \cP^\infty\text{-q.s.}
\eeaa
Note that in the time-invariant case it always holds that $\pa_t u=0$. Consequently, the processes with Dupire's derivatives in $C(\cD)$ are also smooth according to our definition.
\end{rem}

We next introduce the notations of nonlinear expectations.  For  a family of probabilities $\cP$, a measurable set $A\in\cF_\infty$, a random variable $\xi$, we define the capacity $\cC$, the sub-linear expectation $\ol\cE$ and the super-linear expectation $\ul\cE$:
\beaa
\cC^\cP[A]:=\sup_{\dbP\in\cP}\dbP[A], \q \ol\cE^\cP[\xi]:=\sup_{\dbP\in\cP}\dbE^\dbP[\xi], \q \ul\cE^\cP[\xi]:=\inf_{\dbP\in\cP}\dbE^\dbP[\xi].
\eeaa

\no We also define the optimal stopping operator (in other words, the Snell envelop) $\ul\cS$ and $\ol\cS$:
\beaa
\ol \cS^\cP_{t}\left[X\right](\omega):=\sup_{\t \in \cT} \ol\cE^\cP \left[X^{t,\o}_\t \right], \q 
\ul \cS^\cP_t \left[X\right](\omega):=\inf_{\tau\in \cT}\ul\cE^\cP\left[X^{t,\o}_\t\right],
\q\mbox{with the barrier process $X$}.
\eeaa
Recall the family of probabilities $\cP^L$ defined above. For simplicity, we denote
\beaa
\cC^L := \cC^{\cP^L},\q \ol\cE^L := \ol\cE^{\cP^L}, \q \ul\cE^L := \ul\cE^{\cP^L},\q
\ol\cS^L := \ol\cS^{\cP^L},\q \ul\cS^L := \ul\cS^{\cP^L}.
\eeaa
The existing literature gives the following results.

\begin{lem}[Tower property, {\rm Nutz and van Handel \cite{NvH}}]\label{lem: tower property}
For a bounded random variable $\xi$, we have
\beaa
\overline{\mathcal{E}}^{L}\left[\xi\right]
~=~\overline{\mathcal{E}}^{L}\left[\overline{\mathcal{E}}^{L}\big[\xi^{\t(\cd),\cd}\big]\right]\q\text{for all}~~ \tau\in\cT.
\eeaa
\end{lem}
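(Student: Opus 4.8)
The plan is to read the identity as the dynamic programming (tower) property of the sublinear expectation $\ol\cE^L$ and to run the standard argument for non-dominated families, which rests on two structural facts about $\cP^L$. \emph{Stability under conditioning}: for $\dbP\in\cP^L$ and a stopping time $\tau$, the regular conditional probabilities $\dbP^{\tau,\o}$, transported to the shifted canonical space, lie in $\cP^L$ for $\dbP$-a.e.\ $\o$. \emph{Stability under pasting}: if $\dbP\in\cP^L$, $\tau\in\cT$, and $\o\mapsto\dbQ_\o\in\cP^L$ is suitably $\cF_\tau$-measurable, then the concatenation $\dbP\otimes_\tau\dbQ_\cd\in\cP^L$. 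Both are inherited from the characterization \reff{defn:PL}, since the constraints $\|\a^\dbP\|_\infty\le L$ and $\b^\dbP\in\dbH^0_L$ are pathwise and local in time, hence preserved under conditioning at, or re-initialising from, a stopping time. I will also use that $\cP^L$ is an analytic set of measures, so that $\ol\cE^L_\tau[X]$ is universally measurable and measurable selections are available.

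Granting this, I would first reduce to $\sigma\equiv 0$: evaluating both sides at $\o$ and writing $\tau^{\sigma(\o),\o}\ge 0$ for the shifted stopping time (using $\sigma\le\tau$), the claim becomes $\ol\cE^L[X^{\sigma(\o),\o}]=\ol\cE^L\big[\ol\cE^L_{\tau^{\sigma(\o),\o}}[X^{\sigma(\o),\o}]\big]$ by the shift-invariance of $\cP^L$, so it suffices to prove $\ol\cE^L[X]=\ol\cE^L\big[\ol\cE^L_\tau[X]\big]$. For the inequality ``$\le$'', fix $\dbP\in\cP^L$; regular conditioning gives $\dbE^\dbP[X\,|\,\cF_\tau](\o)=\dbE^{\dbP^{\tau,\o}}[X^{\tau,\o}]$ for $\dbP$-a.e.\ $\o$, and since $\dbP^{\tau,\o}\in\cP^L$ this is $\le\ol\cE^L_\tau[X](\o)$; taking $\dbE^\dbP$ and then $\sup_{\dbP}$ yields $\ol\cE^L[X]\le\ol\cE^L\big[\ol\cE^L_\tau[X]\big]$.

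For the substantive inequality ``$\ge$'', fix $\eps>0$. Since $(\o,\dbP)\mapsto\dbE^\dbP[X^{\tau,\o}]$ is measurable and $\cP^L$ is analytic, a Jankov--von Neumann selection produces a universally measurable $\o\mapsto\dbQ_\o\in\cP^L$ with $\dbE^{\dbQ_\o}[X^{\tau,\o}]\ge\ol\cE^L_\tau[X](\o)-\eps$ for every $\o$. For any $\dbP\in\cP^L$, stability under pasting gives $\hP:=\dbP\otimes_\tau\dbQ_\cd\in\cP^L$, so
\beaa
\ol\cE^L[X]\ \ge\ \dbE^{\hP}[X]\ =\ \dbE^\dbP\big[\dbE^{\dbQ_\cd}[X^{\tau,\cd}]\big]\ \ge\ \dbE^\dbP\big[\ol\cE^L_\tau[X]\big]-\eps .
\eeaa
Taking $\sup_{\dbP}$ and then $\eps\da 0$ gives $\ol\cE^L[X]\ge\ol\cE^L\big[\ol\cE^L_\tau[X]\big]$, and the two inequalities finish the proof.

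The hard part is not this bookkeeping but establishing the structural properties for \emph{this} family: $\cP^L$ is defined through extensions $\dbQ^{\a,\b}$ on the enlarged space $\tilde\O=\O\times\O\times\O$, so one must verify that conditioning and pasting can be carried out compatibly with the enlargement --- or, equivalently, descend to $\O$ and argue at the level of the martingale problems with time-homogeneous bounds on drift and diffusion --- and that measurable selections of the control pair $(\a,\b)$ exist. This is exactly what Nutz and van Handel \cite{NvH} supply, and it is the reason the statement is quoted rather than reproved here.
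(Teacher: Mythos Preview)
Your proposal is a correct and well-organized sketch of the standard dynamic programming argument for non-dominated sublinear expectations, but you should be aware that the paper does not prove this lemma at all: it is stated with attribution to Nutz and van Handel \cite{NvH} and invoked as a black box throughout (see the proofs of Proposition \ref{prop: partial comparison}, Lemma \ref{lem:frozen construct}, and Lemma \ref{lem: optimal prob}). So there is no ``paper's own proof'' to compare against; the lemma is imported wholesale from the cited reference.

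That said, your outline matches the architecture of the argument in \cite{NvH}: reduce to $\sigma\equiv 0$ by shift-invariance, get ``$\le$'' from stability under conditioning, and get ``$\ge$'' from stability under pasting combined with an $\eps$-optimal measurable selection of near-optimizers $\o\mapsto\dbQ_\o$. You are also right that the genuine work lies in verifying analyticity of $\cP^L$ and the two stability properties for this particular family --- defined via the enlargement \reff{defn:PL} --- and in handling the universal measurability of $\ol\cE^L_\tau[X]$ so that the outer expectation is well-defined. Your closing paragraph correctly flags these as the load-bearing technicalities supplied by \cite{NvH}. In short: nothing is wrong with what you wrote, and you have accurately diagnosed why the paper quotes the result rather than reproving it.
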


\begin{lem}[Snell envelop characterization, {\rm Ekren, Touzi and Zhang \cite{ETZ4}}]\label{thm:optimal stopping}
Let ~$T\in \dbR^+$, $\ch_{D}\in\mathcal{H}$  and $X\in\mathrm{BUC}(\mathcal{D})$. Denote $\ch := \ch_D \we T$.
Define the Snell envelope and the corresponding first hitting time
of the obstacles:
\[
Y:=\overline{\mathcal{S}}^{L}\left[X_{ \ch\wedge\cdot}\right],\q \tau^{*}:=\inf\left\{ t\geq0:Y_{t}=X_{t}\right\} .
\]
Then $Y\ge X$, $Y_{\tau^{*}}=X_{\tau^{*}}$ and $\tau^{*}$ is an optimal stopping
time, i.e. $Y_0 = \ol\cE^L [X_{\t^*}] $.
\end{lem}

\no It is also important to have the following result, of which the proof can be found in Appendix.

\begin{prop}\label{prop: c1 stopping}
Let $D\in\mathcal{R}$, and denote 
\be\label{Dx}
D^x ~:=~ \{y:x+y\in D\}\q \mbox{for} \q x\in D.
\ee
Assume that $O$ is also in $\mathcal{R}$. 
 Define a sequence of stopping times $\{\ch_{n}\}_{n\in\dbN}$:
\begin{equation}\label{def: Hn}
\ch_{0}=0,\q \ch_{n}:=\inf\left\{ s\geq \ch_{n-1}:B_{s}-B_{\ch_{n-1}}\notin O\right\} ,\q n\geq 1.
\end{equation}
Then we have
\beaa
\left.\ba{lll}
&{\rm (i)}\q \lim_{n\rightarrow\infty}\mathcal{C}^{L}\left[\ch_{n}<T\right]=0~~\mbox{for all}~~T\in\dbR^+,\q 
{\rm (ii)}\q \mathcal{\overline{E}}^{L}\left[\ch_{D}\right]<\infty, &\\
& {\rm (iii)}\q \lim_{T\rightarrow\infty} \sup_{x\in D} \cC^L[\ch_{D^x} > T] =0,\q
{\rm (iv)}\q \lim_{n\rightarrow\infty} \sup_{x\in D}  \cC^{L}\left[\ch_{n}<\ch_{D^x}\right]=0.&
 \ea\right.
\eeaa
\end{prop}

\section{Fully nonlinear elliptic PPDE's}\label{sec:ppde}

\subsection{Definition of viscosity solutions of uniformly elliptic PPDE's}

Let $Q\in\cR$ and consider $\cQ$ ($:=\{\o\in\O^e: \o_t\in Q~\mbox{for all}~t\geq 0\}$) as the domain of Dirichlet problem of the PPDE:
\be\label{PPDE}
\mathcal{L}u(\omega):=-G(\omega,u,\partial_{\omega}u,\partial_{\omega\omega}^{2}u)=0~~\mbox{for}~~ \omega\in\mathcal{Q},\q
u=\xi~~ \mbox{on} ~~\partial\mathcal{Q},
\ee
with nonlinearity $G$ and boundary condition by
$\xi$.

\begin{assum}\label{assump: generator} 
The nonlinearity $G:\Omega\times\mathbb{R}\times\mathbb{R}^{d}\times\mathbb{S}^{d} \rightarrow \dbR$
satisfies:\\
{\rm (i)}\quad $\left|G(\cdot,0,0,0)\right|\leq C_{0}$;\\
{\rm (ii)}\quad $G$ is uniformly elliptic, i.e., there exists $L_{0}>0$ such that for all $(\omega,y,z)$
\[
G(\omega,y,z,\gamma_{1})-G(\omega,y,z,\gamma_{2}) ~\geq~ \frac{1}{L_{0}}I_{d}:(\gamma_{1}-\gamma_{2})\q\text{for all}\q \gamma_{1}\geq\gamma_{2}.
\]
{\rm (iii)}\quad $G$ is uniformly continuous on $\Omega^{e}$ with respect to
$d^{e}(\cdot,\cdot)$, and is uniformly Lipschitz continuous in $(y,z,\gamma)$ with a Lipschitz constant $L_{0}$;\\
{\rm (iv)}\quad $G$ is uniformly decreasing in $y$, i.e. there exists a function
$\lambda:\mathbb{R}\rightarrow\mathbb{R}$ strictly increasing and
continuous, $\lambda(0)=0$, and
\beaa
G(\omega,y_{1},z,\gamma)-G(\omega,y_{2},z,\gamma) ~\geq~ \lambda(y_{2}-y_{1}),~\mbox{for all}~y_2\geq y_1,(\o,z,\g)\in \O^e\times\dbR^d\times\dbS^d.
\eeaa
\end{assum}

For any time-invariant function $u$ on $\O^e$ and $\omega\in\mathcal{Q}$, we define the set of test functions:
\beaa
\left.\ba{lll}
\underline{\mathcal{A}}^{\cP} u(\omega)
:=
\Big\{\f: \varphi\in C^{2}(\mathcal{O}_\e)~\mbox{and}~(\varphi-u^{\omega})_{0}=\underline{\mathcal{S}}^{\cP}_0\left[(\varphi-u^{\omega})_{\ch_{\e}\wedge\cdot}\right]~\mbox{for some}~\e>0\Big\},\\
\mathcal{\overline{A}}^{\cP} u(\omega)
:=
\Big\{\f: \varphi\in C^{2}(\mathcal{O}_\e)~\mbox{and}~(\varphi-u^{\omega})_{0}=\overline{\mathcal{S}}^{\cP}_0\left[(\varphi-u^{\omega})_{\ch_{\e} \wedge\cdot}\right]~\mbox{for some}~\e>0\Big\},
\ea\right.
\mbox{with}~~\ch_\e := \ch_{O_\e}\we\e.
\eeaa
We call $\ch_\e$ a localization of test function $\f$. In particular, we denote $\ol\cA^L:= \ol\cA^{\cP^L}$, $\ul\cA^L:= \ul\cA^{\cP^L}$, as we choose $\cP^L$ as the family of probabilities.
Now, we define the viscosity solutions to the elliptic PPDE (\ref{PPDE}).
\begin{defn}
Let  $\{u_t\}_{t\in\dbR^+}$ be a time-invariant progressively measurable process.\\
{\rm (i)}\quad $u$ is a $\cP$-viscosity subsolution (resp. supersolution) of PPDE (\ref{PPDE}),
if we have for all $\omega\in\mathcal{Q}$ and $\varphi\in\underline{\mathcal{A}}^{\cP}u(\omega)$
(resp. $\varphi\in\mathcal{\overline{A}}^{\cP}u(\omega)$):
\[
-G(\omega,u(\o),\partial_{\omega}\f_0,\partial_{\omega\omega}^{2}\varphi_0)\leq\ (resp.\ \geq)\ 0.
\]
{\rm (ii)}\quad $u$ is a $\cP$-viscosity solution of PPDE (\ref{PPDE}), if $u$ is both a $\cP$-viscosity
subsolution and a $\cP$-viscosity supersolution of PPDE (\ref{PPDE}).
\end{defn}

By very similar arguments as in the proof of Theorem 3.16 and Theorem 5.1 in \cite{ETZ2}, we may easily prove that:

\begin{thm}[Consistency with classical solution]
Let Assumption \ref{assump: generator} hold true and $L>0$. Given a function $u\in C^{2}(\mathcal{Q})$,
then $u$ is a $\cP^L$-viscosity supersolution (resp. subsolution, solution) to PPDE (\ref{PPDE}) if and only
if $u$ is a classical supersolution (resp. subsolution, solution).
\end{thm}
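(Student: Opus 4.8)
The plan is to prove the two implications separately and reduce, in each direction, the viscosity inequality to the classical one by an explicit choice of test function. Suppose first that $u\in C^2(\cQ)$ is a classical supersolution, i.e. $-G(\o,u(\o),\pa_\o u_0^\o,\pa^2_{\o\o}u_0^\o)\ge 0$ for all $\o\in\cQ$ (where I write $u^\o$ for the shifted process and note that $\pa_\o u^\o_0=\pa_\o u(\o)$, $\pa^2_{\o\o}u^\o_0=\pa^2_{\o\o}u(\o)$ by the localization-uniqueness in Definition \ref{def: derivatives for Omega e}). Fix $\o\in\cQ$, $L>0$ and $\f\in\ol\cA^L u(\o)$ with localization $\ch_{O_\e}$. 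I would apply the functional It\^o formula of Definition \ref{def: derivatives for Omega e} to $\psi:=\f-u^\o$ on $[0,\ch_{O_\e}]$, so that $d\psi_t = (\pa_\o\f_t-\pa_\o u^\o_t)\cdot dB_t + \tfrac12(\pa^2_{\o\o}\f_t-\pa^2_{\o\o}u^\o_t):d\la B\ra_t$, and then argue by contradiction: if $-G(\o,u(\o),\pa_\o\f_0,\pa^2_{\o\o}\f_0)<0$, then by the uniform continuity of $G$ and the continuity of $\pa_\o\f,\pa^2_{\o\o}\f,\pa_\o u^\o,\pa^2_{\o\o}u^\o$ in $t$ and in $\o'$, together with the ellipticity (Assumption \ref{assump: generator}(ii)) and Lipschitz continuity (Assumption \ref{assump: generator}(iii)) of $G$, one can shrink $\e$ so that on a small ball the drift-plus-generator combination forces $\psi$ to have a strictly positive $\ol\cE^L$-drift; choosing the controls $(\a,\b)$ appropriately (using ellipticity to dominate the $\G$-term) yields an admissible $\dbP\in\cP^L$ and a stopping time $\t$ with $\ol\cE^L_0[\psi_{\t\we\ch_{O_\e}}]>\psi_0$, contradicting $\psi_0=\ol\cS^L_0[\psi_{\ch_{O_\e}\we\cdot}]$.

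For the converse, suppose $u\in C^2(\cQ)$ is a viscosity supersolution. Here the natural candidate test function at a point $\o\in\cQ$ is $\f:=u^\o$ itself, localized by $\ch_{O_\e}$ for $\e$ small enough that $O_\e\subset Q^\o$; then $(\f-u^\o)_0=0=\ol\cS^L_0[(\f-u^\o)_{\ch_{O_\e}^\o\we\cdot}]$ trivially, so $\f\in\ol\cA^Lu(\o)$, and the definition of viscosity supersolution gives $-G(\o,u(\o),\pa_\o\f_0,\pa^2_{\o\o}\f_0)=-G(\o,u(\o),\pa_\o u(\o),\pa^2_{\o\o}u(\o))\ge0$, which is exactly the classical inequality at $\o$. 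The subsolution case is symmetric, replacing $\ol\cS^L,\ol\cA^L$ by $\ul\cS^L,\ul\cA^L$ and reversing inequalities; for a solution one combines the two.

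The main obstacle is the first (classical $\Rightarrow$ viscosity) direction, specifically making the contradiction argument rigorous: one must show that a strictly signed generator, after translating through the functional It\^o expansion, produces a genuine violation of the $\ol\cE^L$-submartingale (resp.\ supermartingale) property of $\f-u^\o$ at time $0$ over the localization. This requires: (a) controlling the time-dependence of the derivatives near $t=0$ so that the ``frozen'' inequality at $\o$ persists on $[0,\ch_{O_\e}]$ for small $\e$ (uniform continuity of $G$ on $\O^e$ and continuity of the derivative processes); (b) using uniform ellipticity to select a diffusion coefficient $\b\in\dbH^0_L$ and drift $\a$ with $\|\a\|_\infty\le L$ realizing the bad drift, so that the relevant $\dbP$ actually lies in $\cP^L$ — this is where the constant $L$ in the definition of the test-function classes matters and why one only needs the inequality to fail for \emph{some} $L$; and (c) the localization estimates of Proposition \ref{prop: c1 stopping}, which guarantee $\ch_{O_\e}<\infty$ quasi-surely and that the stopped process is well-behaved. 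The statement asserts this can be done ``by very similar arguments'' to \cite{ETZ2}, so in the write-up I would isolate these three points, invoke the cited theorems for the technical heart of (a)--(b), and keep the presentation at the level of indicating the translation rather than reproving the optimal-stopping machinery.
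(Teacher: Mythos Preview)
Your proposal is correct and follows precisely the approach the paper defers to: the paper gives no proof of its own, citing instead Theorems~3.16 and~5.1 of \cite{ETZ2}, and your two directions (test with $\varphi=u^\omega$ for viscosity $\Rightarrow$ classical; contradiction via the functional It\^o decomposition and a suitable choice of $(\alpha,\beta)\in\cP^{L}$ for classical $\Rightarrow$ viscosity, with $L\ge L_0$) are exactly the ETZ2 argument adapted to the elliptic setting. Your identification of the key technical ingredients (a)--(b) is accurate; point~(c) is harmless but unnecessary here, since $\ch_{O_\varepsilon}<\infty$ $\cP^L$-q.s.\ is immediate without Proposition~\ref{prop: c1 stopping}.
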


\begin{thm}[Stability]
Let $L>0$, $G$ satisfy Assumption \ref{assump: generator}, and $u\in\mbox{\rm BUC}(\cQ)$. Assume that

\no {\rm (i)}\q for any $\e>0$, there exist $G^\e$ and $u^\e\in\mbox{\rm BUC}(\cQ)$ such that $G^\e$ satisfies Assumption \ref{assump: generator} and $u^\e$ is a $\cP^L$-viscosity subsolution (resp. supersolution) of PPDE \eqref{PPDE} with generator $G^\e$;

\no {\rm (ii)}\q as $\e\rightarrow 0$, $(G^\e,u^\e)$ converge to $(G,u)$ locally uniformly in the following sense: for any $(\o,y,z,\g)\in\O^e\times\dbR\times\dbR^d\times\dbS^d$, there exits $\d>0$ such that
\beaa
\lim_{\e\rightarrow 0}\sup_{(\tilde\o,\tilde y,\tilde z,\tilde\g)\in O_\d(\o,y,z,\g)}\Big[|(G^\e-G)^\o(\tilde\o,\tilde y,\tilde z,\tilde\g)|+|(u^\e-u)^\o(\tilde\o)|\Big]=0,
\eeaa
where we abuse the notation $O_\d$ to denote the $\d$-ball in the corresponding space.

\no Then $u$ is a $\cP^L$-viscosity solution (resp. supersolution) of PPDE \eqref{PPDE} with generator $G$. 
\end{thm}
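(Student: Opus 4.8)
The plan is to argue by contradiction against the definition of viscosity subsolution (the supersolution case being symmetric, and the solution case following by combining the two). Suppose $u$ is not an $L$-viscosity subsolution of PPDE \eqref{PPDE} with generator $G$. Then there exist $\o\in\cQ$ and a test function $\f\in\underline{\cA}^L u(\o)$ such that $-G(\o,u(\o),\partial_\o\f_0,\partial^2_{\o\o}\f_0)>0$. The idea is to perturb $\f$ slightly so that it becomes a test function for $u^\e$ at (roughly) the same point, for $\e$ small, and to conclude that the subsolution property of $u^\e$ with generator $G^\e$ is violated in the limit.

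First I would recall that $\f\in\underline{\cA}^L u(\o)$ means $\f\in C^2(\cO_{\e_0})$ for some $\e_0>0$ and $(\f-u^\o)_0=\underline\cS^L_0[(\f-u^\o)_{\ch_{\cO_{\e_0}}^\o\wedge\cdot}]$, i.e. $\f-u^\o$ attains a "minimum in the nonlinear-expectation sense" at time $0$. Since the strict inequality $-G(\o,u(\o),\partial_\o\f_0,\partial^2_{\o\o}\f_0)>0$ is open, by continuity of $G$ and by uniform continuity of the data one can find a small constant $c>0$ and, after replacing $\f$ by $\f+c\,\ch_{\cO_{\e_0}}$ restricted to a possibly smaller ball $\cO_{\e_1}$ (so that the derivatives $Z,\Gamma$ at $0$ are unchanged while $\f-u^\o$ now has a \emph{strict} local minimum), still have $-G(\o,u(\o),\partial_\o\f_0,\partial^2_{\o\o}\f_0)>0$. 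Next I would use hypothesis (ii): fix the ball $O_\d(\o,u(\o),\partial_\o\f_0,\partial^2_{\o\o}\f_0)$ on which $(G^\e,u^\e)\to(G,u)$ uniformly. For $\e$ small, $|u^\e(\o)-u(\o)|$ is tiny, so the shifted function $\f_\e:=\f+(u^\e(\o)-u(\o))$ — or, more carefully, $\f$ itself together with the observation that the minimum point of $\f-(u^\e)^\o$ is close to $0$ — still satisfies $(\f_\e-(u^\e)^\o)_0=\underline\cS^L_0[(\f_\e-(u^\e)^\o)_{\ch_{\cO_{\e_1}}^\o\wedge\cdot}]$, possibly after localizing to a smaller ball, by the standard argument that a strict minimum persists under small uniform perturbations. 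Hence $\f_\e\in\underline\cA^L u^\e(\o)$ (or at a nearby point $\o_\e$), and the subsolution property of $u^\e$ gives $-G^\e(\o,u^\e(\o),\partial_\o(\f_\e)_0,\partial^2_{\o\o}(\f_\e)_0)\le 0$. Since $\partial_\o(\f_\e)_0=\partial_\o\f_0$ and $\partial^2_{\o\o}(\f_\e)_0=\partial^2_{\o\o}\f_0$, letting $\e\to 0$ and invoking the uniform convergence $G^\e\to G$ and $u^\e(\o)\to u(\o)$ on $O_\d$ yields $-G(\o,u(\o),\partial_\o\f_0,\partial^2_{\o\o}\f_0)\le 0$, contradicting the strict inequality. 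This proves $u$ is an $L$-viscosity subsolution.

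The main obstacle I anticipate is the bookkeeping around the localization and the "persistence of a strict minimum" step: the test-function condition is phrased through the optimal-stopping functional $\underline\cS^L_0$ rather than a pointwise minimum, so I cannot simply quote the classical lemma. I would handle this using Lemma \ref{thm:optimal stopping} (the Snell-envelope characterization): the condition $(\f-u^\o)_0=\underline\cS^L_0[(\f-u^\o)_{\ch_{\cO_{\e_1}}^\o\wedge\cdot}]$ is stable under adding a term of the form $c\,\ch$ with $c>0$ (which only helps the submartingale inequality) and under a uniform perturbation of size $o(1)$ of the obstacle, provided we simultaneously shrink the localizing ball; one checks that $\sup_{\P\in\cP^L}\dbE^\P[\,\cdot\,]$ of the perturbed obstacle differs from that of the original by $O(\|(u^\e-u)^\o\|_\infty)\to 0$ uniformly on $\cO_{\e_1}$, and that the strict margin $c$ absorbs this error for $\e$ small. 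Combined with the uniform continuity of $G$ in all its arguments (Assumption \ref{assump: generator}(iii)) and the locally-uniform convergence in (ii), this closes the argument. The supersolution statement is identical with $\overline\cS^L$, $\overline\cA^L$ and reversed inequalities, and the last assertion (''$u$ is a viscosity solution'') requires the hypotheses for both sub- and supersolutions, which is how I would read the mixed wording of the statement.
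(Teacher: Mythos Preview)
The paper does not actually prove this theorem: it merely states that the result follows ``by very similar arguments as in the proof of Theorem~3.16 and Theorem~5.1 in \cite{ETZ2}''. Your overall strategy---assume a test function $\f\in\underline{\cA}^L u(\o)$ violates the subsolution inequality, perturb to make the tangency strict, transfer $\f$ to $\underline{\cA}^L u^\e$ at a nearby point using the uniform convergence, apply the subsolution property of $u^\e$ and pass to the limit---is precisely the template of the ETZ2 stability proof, so in spirit you are doing exactly what the paper intends.

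There is, however, one concrete technical issue. Your device for forcing a strict minimum, ``replace $\f$ by $\f+c\,\ch_{\cO_{\e_0}}$'', does not make sense in the present elliptic framework. The quantity $\ch_{\cO_{\e_0}}$ is a stopping time, not an adapted process, so $\f+c\,\ch_{\cO_{\e_0}}$ is not a process to which the Snell-envelope condition can be applied; and if you instead mean the running process $t\mapsto \f_t+ct$, this has a nonzero $dt$-component and therefore falls outside the class $C^2(\cO_\e)$ of Definition~\ref{def: derivatives for Omega e}, which requires $du_t=Z_t\cdot dB_t+\frac12\Gamma_t:d\langle B\rangle_t$ with no drift term. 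In the parabolic setting of \cite{ETZ2} one can add $\delta(s-t)$ because parabolic test functions are allowed a $\pa_t$-term; here that trick is unavailable.

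Two clean fixes are available and either would complete your argument. First, you can perturb in the second-order slot instead: replace $\f$ by the paraboloid $Q^{\alpha,\beta+\eta I}$ with $(\alpha,\beta)=(\pa_\o\f_0,\pa^2_{\o\o}\f_0)$ and small $\eta>0$, using the semijet characterization (the Proposition immediately following the definition of $\ul\cJ^L,\ol\cJ^L$); this is a genuine $C^2(\cO_\e)$ function, it makes the $\ul\cS^L$-minimum strict by a margin proportional to $\eta\,\ul\cE^L[\ch_{\cO_{\e_1}}]>0$, and the change in $\Gamma_0$ by $\eta I_d$ is harmless since $-G(\o,u(\o),\alpha,\beta)>0$ is strict and $G$ is continuous. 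Second---and this is closer to how ETZ2 actually argues---you can bypass the ``strict minimum'' perturbation entirely: apply Lemma~\ref{thm:optimal stopping} directly to the obstacle $(\f-(u^\e)^\o)_{\ch\wedge\cdot}$, obtain an optimal stopping time $\tau^*_\e$ and a path $\o^*_\e$ at which $\f^{\o^*_\e}\in\underline{\cA}^L u^\e(\o\bar\otimes\o^*_\e)$, and then use the local uniform convergence $u^\e\to u$ together with $(\f-u^\o)_0=\ul\cS^L_0[\cdots]$ to force $\tau^*_\e\to 0$; the limit then yields the contradiction. With either repair the rest of your outline goes through.
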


\subsection{Equivalent definition by semijets}

Following the standard theory of viscosity solutions for PDE's, we may also define viscosity solutions
via semijets. Similar to \cite{RTZ-survey} and \cite{RTZ}, we introduce the notion of semijets in the context of PPDE. First, denote functions:
$$\psi^{\a,\b}(\o)
= \a \cd \o_{\bar t(\o)}
  + \frac12 \b: \o_{\bar t(\o)}\o^\intercal_{\bar t(\o)} .
$$
We next define the sub- and super-jets:
\beaa
\ul\cJ^L u(\o):=\Big\{(\a,\b): ~\psi^{\a,\b} \in \ul\cA^L u(\o)\Big\}\q \mbox{and}\q
\ol\cJ^L u(\o):=\Big\{(\a,\b): ~\psi^{\a,\b} \in \ol\cA^L u(\o)\Big\}.
\eeaa

\begin{prop}\label{prop:equiv-def}
Let $u\in \mbox{\rm BUC}(\cQ)$. Then $u$ is an $\cP^L$-viscosity subsolution (resp. supersolution) of PPDE \eqref{PPDE}, if and only if for any $\o\in\cQ$,
\beaa
-G\big(\o,u(\o),\a,\b\big)\le (\mbox{resp.}~\geq)~0,\q\mbox{for all}~ (\a,\b)\in\ul\cJ^L u(\o)~ (\mbox{resp.}~\ol\cJ^L u(\o)).
\eeaa
\end{prop}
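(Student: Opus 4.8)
The plan is to observe that the semijet condition is exactly the viscosity property restricted to the sub-class of quadratic test functions $Q^{\a,\b,\o}$, and that conversely any $C^2$ test function can be traded, at the price of an arbitrarily small perturbation of its second-order jet, for a quadratic one. For the easy direction, note first that $Q^{\a,\b,\o}\in C^2(\cO_\e)$ for every $\e>0$: its functional It\^o decomposition is $d(Q^{\a,\b,\o})^\o_t=(\a+\b B_t)\cd dB_t+\frac12\b:d\langle B\rangle_t$, valid for all $\dbP\in\cP^\infty$ with continuous coefficients, so $\pa_\o(Q^{\a,\b,\o})_0=\a$ and $\pa^2_{\o\o}(Q^{\a,\b,\o})_0=\b$. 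Hence, if $u$ is an $L$-viscosity subsolution and $(\a,\b)\in\ul\cJ^Lu(\o)$, then $Q^{\a,\b,\o}\in\ul\cA^Lu(\o)$ by definition, and applying the subsolution property to $\f=Q^{\a,\b,\o}$ yields $-G(\o,u(\o),\a,\b)\le 0$; the supersolution case is identical.

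For the converse, fix $\o\in\cQ$ and $\f\in\ul\cA^Lu(\o)$ with localization $\ch^\o_{O_\e}$, and write $Z_0:=\pa_\o\f_0$, $\Gamma_0:=\pa^2_{\o\o}\f_0$. Since adding a constant to a test function preserves membership in $\ul\cA^Lu(\o)$, we may assume $\f_0=0$. For $\eta>0$ set $\psi:=Q^{Z_0,\Gamma_0+\eta I_d,\o}$; the claim is that for $\e'\in(0,\e]$ small enough one has $\psi\in\ul\cA^Lu(\o)$ with localization $\ch^\o_{O_{\e'}}$. Granting this, $(Z_0,\Gamma_0+\eta I_d)\in\ul\cJ^Lu(\o)$, so by hypothesis $-G(\o,u(\o),Z_0,\Gamma_0+\eta I_d)\le 0$; letting $\eta\downarrow 0$ and using the continuity of $G$ in $(y,z,\g)$ from Assumption \ref{assump: generator}(iii) (or, alternatively, the ellipticity in Assumption \ref{assump: generator}(ii)) gives $-G(\o,u(\o),\pa_\o\f_0,\pa^2_{\o\o}\f_0)\le 0$, which is the subsolution property.

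It remains to prove the claim, which is the crux. Two points enter. First, shrinking the localization is harmless: since $\ch^\o_{O_{\e'}}\le\ch^\o_{O_\e}$, for every $\t\in\cT^0$ one has $\t\we\ch^\o_{O_{\e'}}=(\t\we\ch^\o_{O_{\e'}})\we\ch^\o_{O_\e}$, so $\ul\cE^L_0\big[(\f-u^\o)_{\t\we\ch^\o_{O_{\e'}}}\big]\ge\ul\cS^L_0\big[(\f-u^\o)_{\ch^\o_{O_\e}\we\cd}\big]=(\f-u^\o)_0$, i.e.\ the defining identity of $\f$ persists with $\ch^\o_{O_{\e'}}$. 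Second, set $X:=\psi-\f\in C^2(\cO_{\e'})$, with $X_0=0$, $\pa_\o X_t=\b B_t-(Z_t-Z_0)$ and $\pa^2_{\o\o}X_t=\eta I_d-(\Gamma_t-\Gamma_0)$, where $\b=\Gamma_0+\eta I_d$; by continuity of $Z$ and $\Gamma$ we may pick $\e'$ so small that $|\pa_\o X_t|\le C\e'$ and $\pa^2_{\o\o}X_t\ge\frac\eta2 I_d$ on $\{t\le\ch^\o_{O_{\e'}}\}$. For any $\dbP\in\cP^L$, passing to the extension and writing $B=A+M$, the $dM$-integral is a martingale (its integrand is bounded and $\dbE^\dbP[\ch^\o_{O_{\e'}}]<\infty$ by Proposition \ref{prop: c1 stopping}), so taking expectations in the It\^o formula and using $|\a^\dbP|\le L$ and $\b^\dbP(\b^\dbP)^\top\ge\frac2L I_d$,
\[
\dbE^\dbP\big[X_{\t\we\ch^\o_{O_{\e'}}}\big]
=\dbE^\dbP\Big[\int_0^{\t\we\ch^\o_{O_{\e'}}}\pa_\o X_t\cd dA_t+\tfrac12\int_0^{\t\we\ch^\o_{O_{\e'}}}\pa^2_{\o\o}X_t:d\langle B\rangle_t\Big]
\ge\Big(\frac{\eta d}{2L}-C\e' L\Big)\,\dbE^\dbP\big[\t\we\ch^\o_{O_{\e'}}\big].
\]
Thus for $\e'$ small (depending only on $\eta$, $L$) this is $\ge 0$, so $\ul\cE^L_0\big[X_{\t\we\ch^\o_{O_{\e'}}}\big]\ge 0$ for all $\t$. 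Combining superadditivity of $\ul\cE^L_0$ with the persistence of $\f$'s identity, $\ul\cE^L_0\big[(\psi-u^\o)_{\t\we\ch^\o_{O_{\e'}}}\big]\ge\ul\cE^L_0\big[X_{\t\we\ch^\o_{O_{\e'}}}\big]+\ul\cE^L_0\big[(\f-u^\o)_{\t\we\ch^\o_{O_{\e'}}}\big]\ge(\f-u^\o)_0=(\psi-u^\o)_0$ for all $\t$, i.e.\ $\psi\in\ul\cA^Lu(\o)$, proving the claim. The supersolution statement follows by the same argument run with $\b=\Gamma_0-\eta I_d$ and all inequalities reversed. The main obstacle is precisely the displayed estimate: dominating the sign-indefinite drift term $\int\pa_\o X\cd dA$ by the strictly positive, uniformly elliptic quadratic-variation term $\frac12\int\pa^2_{\o\o}X:d\langle B\rangle$, which is what forces the localization radius $\e'$ to be chosen small and uses the full structure of $\cP^L$ (bounded drift, non-degenerate diffusion).
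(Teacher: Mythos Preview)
Your argument is correct and follows essentially the same route as the paper: perturb the Hessian of the quadratic test function, split $\psi-u^\o=(\psi-\f)+(\f-u^\o)$, control the first piece by It\^o together with the drift/diffusion bounds defining $\cP^L$, and invoke the test-function property of $\f$ for the second (the paper phrases this via the duality $\ul\cE^L=-\ol\cE^L[-\cdot]$ and subadditivity of $\ol\cE^L$, but the content is identical). One minor slip: continuity of $Z$ only yields $|\pa_\o X_t|\le|\b|\e'+\rho_Z(\e')$ for some modulus $\rho_Z$, not $\le C\e'$; since this quantity still tends to $0$ with $\e'$, your displayed lower bound survives (with $C\e'L$ replaced by $L(|\b|\e'+\rho_Z(\e'))$) and the conclusion is unaffected.
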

\begin{proof}
The `only if' part is trivial by the definitions. It remains to prove the `if' part. We only show the result for $\cP^L$-viscosity subsolutions, while the result for the supersolution can be proved similarly. Let $\f\in\ul\cA^L u(\o)$ and $\ch_\d(:=\ch_{O_{\d}}\we\d)$ be the corresponding localization. Without loss of generality, we may assume that $\o={\bf 0}$ (i.e. $\o_t=0$ for all $t\in \dbR^+$) and $\f_0=u_0$.  Define:
\beaa
\a:=\pa_\o \f_0~~\mbox{and}~~\b:=\pa^2_{\o\o}\f_0.
\eeaa
Let $\e>0$. Since the processes $\pa_\o\f$ and $\pa^2_{\o\o}\f$ are both continuous, there exists $\d'\le \d$ such that
\beaa
|\pa_\o\f_t -\a|\le \e~~\mbox{and}~~|\pa^2_{\o\o}\f_t - \b|\le \e,
\q\mbox{for}~~ t\le \ch_{O_{\d'}}.
\eeaa
Denote $\b_\e:=\b+(1+2L)\e$. Then, for all $\t\in \cT$ such that $\t\le \ch_{\d'}$, we have
\begin{multline*}
u_0-\ul\cE^L\big[(\psi^{\a,\b_\e}-u)_\t\big]
~=~ \ol\cE^L\big[(u-u_0 - \psi^{\a,\b_\e})_\t\big]
~\le ~ \ol\cE^L\big[(u-\f)_\t\big]+\ol\cE^L\big[(\f-\f_0 - \psi^{\a,\b_\e})_\t\big]\\
~\le~ \ol\cE^L\Big[\int_0^\t (\pa_\o\f_s-\a)dB_s+\frac12\int_0^\t (\pa^2_{\o\o}\f_s-\b_\e)ds\Big]
~\le~ \ol\cE^L\Big[\int_0^\t \big(L|\pa_\o\f_s-\a|+\frac12(\pa^2_{\o\o}\f_s-\b_\e)\big)ds\Big] ~\le~ 0, 
\end{multline*}
where we used the fact that $\f\in\ul\cA^Lu({\bf 0})$ and the definition of $\cP^L$ in \eqref{defn:PL}. Consequently, we obtain $(\a,\b_\e)\in \ul\cJ^Lu({\bf 0})$, and thus
$$-G(0,u(0),\a,\b_\e)\le 0.$$
Finally, thanks to the continuity of $G$, we obtain the desired result by sending $\e\rightarrow 0$.
\end{proof}

\section{Main results}\label{sec: main}

Following Ekren, Touzi and Zhang \cite{ETZ3}, we introduce the path-frozen PDE's:
\be\label{defn:Oepsilon}
(E)_{\epsilon}^{\omega}\ \ \ {\bf L}^{\omega}v:=-G(\o,v,\pa_x v,\pa^{2}_{xx} v)=0
~\mbox{on}~
\ O_\e(\o) := O_\e\cap Q^\o, \q\mbox{with}~~Q^\o:=Q^{\o_{\bar t(\o)}}
\ee
(Recall the notation in \eqref{Dx}).
Note that $\o$ is a parameter rather than a variable in the above PDE. Similar to \cite{ETZ3}, our wellposedness result relies on the following condition on the PDE $(E)_\e^\o$.

\begin{assum}\label{assmp: approx PDE} 
For $\epsilon>0$, $\omega\in\mathcal{Q}$ and $h\in C\big(\partial O_\e(\o)\big)$, we have $\overline{v}=\underline{v}$,
where
\beaa
&\overline{v}(x):=\inf\left\{ w(x):w\in C_0^2(O_\e(\o)),\ {\bf L}^{\omega}w\geq0~\mbox{on}~O_\e(\o),\ w\geq h\text{ on }\partial O_\e(\o)\right\},&\\
&\underline{v}(x):=\sup\left\{ w(x):w\in C_0^2(O_\e(\o)),\ {\bf L}^{\omega}w\leq0~\mbox{on}~O_\e(\o),\ w\leq h\text{ on }\partial O_\e(\o)\right\},&
\eeaa
and $C^2_0(O_\e(\o)):=C^2(O_\e(\o))\cap C\big({\rm cl}(O_\e(\o))\big)$.
\end{assum}

\no In this paper, we call the classical notion of viscosity solution to PDE (see for example \cite{CL}) as Crandall-Lions (C-L) viscosity solution, in order to distinguish the one to PPDE.

\begin{eg}
Assume that $g:\dbS^d\rightarrow\dbR$ is convex, and that the corresponding uniformly elliptic PDE
$${\bf L}w = -g(\pa^2_{xx} w) = 0~\mbox{on}~O,~~w=h~\mbox{on}~\pa O$$
has a C-L viscosity solution. Then according to Caffareli and Cabre \cite{CC} (Theorem 6.6 on page 54), the C-L viscosity solution has the interior $C^2$-regularity.
In particular, this equation satisfies Assumption \ref{assmp: approx PDE}. 
\end{eg}

The rest of the paper is devoted to prove the following two main results.
\begin{thm}[Comparison result]\label{comparison}
Let Assumptions \ref{assump: generator}
and \ref{assmp: approx PDE} hold true, and $u,v\in {\rm BUC}(\cQ)$ be a $\cP^L$-viscosity sub- and super-solution to the PPDE (\ref{PPDE}) for some $L>0$, respectively.  If $u \leq v$ on $\pa\cQ$, then we have $u\leq v$ on $\cQ$.
\end{thm}

\begin{thm}[Wellposedness]\label{wellposedness}
Let Assumptions \ref{assump: generator}
and \ref{assmp: approx PDE} hold true, and  $\xi\in \mbox{\rm BUC}(\pa \cQ)$. Then the PPDE (\ref{PPDE}) has a unique $\cP^L$-viscosity solution in {\rm BUC}($\cQ$) for $L\ge L_0$.
\end{thm}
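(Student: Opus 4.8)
The plan is to split the statement into uniqueness and existence. Uniqueness is immediate: if $u_1, u_2$ are two {\rm BUC} viscosity solutions with the same boundary data $\xi$, then $u_1$ is a subsolution and $u_2$ a supersolution with $u_1 = \xi = u_2$ on $\partial\mathcal{Q}$, so Theorem \ref{comparison} gives $u_1 \le u_2$; exchanging the roles gives $u_2 \le u_1$, hence $u_1 = u_2$ on $\mathcal{Q}$. So the entire content of the theorem, beyond the comparison result, is the \emph{existence} of at least one viscosity solution, which I would establish by a Perron-type construction in the spirit of \cite{ETZ3}.

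First I would define the Perron envelope
\beaa
u(\o) := \inf\big\{ w(\o) : w \in \mbox{\rm BUC}(\mathcal{Q})~\mbox{is a viscosity supersolution of \eqref{PPDE} with}~w_{\ch_Q}\ge \xi\big\}.
\eeaa
The first task is to check that the admissible family is nonempty and that $u$ is well defined and bounded: using Assumption \ref{assump: generator}(i),(iv) one constructs an explicit constant (or a function built from $\|\o\|_\infty$, as in Example \ref{eg:timeinv-func}) that is a supersolution dominating $\xi$, and similarly a subsolution below $\xi$, which by comparison bounds $u$ from both sides. One must also verify that $u$ is time-invariant — this follows because the whole setup ($G$, $\xi$, the domain $Q$, the class $\cP^L$) is invariant under time reparametrization, so the defining family is stable under $\o\mapsto\o_{\ell(\cd)}$. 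The next task is the classical Perron machinery adapted to the path-dependent framework: (a) the infimum of supersolutions is a supersolution (an inf-stability / stability-type argument, here one uses the semijet characterization Proposition~2.x together with the fact that the localized optimal-stopping problems behave well under pointwise infima), so $u$ itself, or rather its lower semicontinuous envelope $u_*$, is a supersolution; and (b) the upper semicontinuous envelope $u^*$ is a subsolution, proved by the standard bump-construction contradiction argument: if $u^*$ failed the subsolution property at some $\o$, one could perturb a supersolution in the admissible family slightly below it near $\o$, contradicting minimality.

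The genuinely hard part, flagged already in the introduction, is \emph{continuity of $u$ up to the boundary}, i.e. showing $u \in \mbox{\rm BUC}(\mathcal{Q})$ and $u_{\ch_Q} = \xi$. The obstruction is that the hitting time $\ch_Q(\o)$ is not continuous in $\o$, so one cannot simply read off boundary regularity from that of $\xi$. Here I would use the path-frozen PDEs $(E)_\e^\o$ and Assumption \ref{assmp: approx PDE}: on small balls $O_\e(\o)$ one replaces the PPDE by the frozen PDE, whose solution $\overline v = \underline v$ provides, via the optimal-stopping representation (Lemma \ref{thm:optimal stopping}) and Proposition \ref{prop: c1 stopping} (which controls $\ol\cE^L[\ch_D]$ and the probability that the localizing hitting times escape before $\ch_{D}$), both a barrier above $u$ and below $u$ near the boundary. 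Combining the interior regularity coming from these PDE barriers with the convexity of the modulus $\rho$ assumed in the definition of {\rm BUC}, one propagates the modulus of $\xi$ to a modulus for $u$ on all of $\mathrm{cl}(\mathcal{Q})$, and in particular gets $u_{\ch_Q}=\xi$. Once $u\in\mbox{\rm BUC}(\mathcal{Q})$ with the right boundary values and $u_*$ is a supersolution while $u^*$ is a subsolution, the comparison Theorem \ref{comparison} forces $u^*\le u_*$, hence $u=u^*=u_*$ is continuous and is simultaneously a sub- and supersolution, i.e. the desired viscosity solution. I expect the barrier construction and the careful bookkeeping of moduli near the irregular boundary to be where essentially all the work lies; the Perron stability steps and the uniqueness step are routine given the comparison principle and the semijet reformulation.
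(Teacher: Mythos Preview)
Your uniqueness argument is fine, but your existence scheme diverges from the paper in a way that leaves real gaps. You take the Perron envelope over \emph{BUC viscosity} supersolutions and then invoke the classical machinery: inf-stability of supersolutions and the bump-construction for the subsolution property. Neither of these steps is known to go through in the path-dependent framework of this paper. The test functions in $\ol\cA^L u(\o)$ are defined through a nonlinear optimal-stopping problem $\ol\cS^L$, not through pointwise tangency, so the usual localization argument ``if $\f$ touches $\inf_\alpha w_\alpha$ from below then $\f$ touches some $w_\alpha$ nearby'' does not transfer; the semijet reformulation does not rescue this, because membership in $\ol\cJ^L$ is itself a nonlocal condition. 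Likewise, the bump construction requires producing a strictly smaller supersolution by a local modification, and in the path space a perturbation at one $\o$ affects the optimal-stopping values along all nearby paths in a way that need not preserve the supersolution property. These are precisely the obstacles that lead \cite{ETZ3} and the present paper to abandon the textbook Perron route.

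What the paper actually does is quite different: it defines $\ol u,\ul u$ as inf/sup over \emph{piecewise classical} functions in $\overline{C}^2(\cQ)$ (Definition \ref{Def: overbar C2} and \eqref{eq:bar u under u}), proves $\ol u=\ul u$ by constructing explicit $\overline{C}^2$ approximants from solutions $\theta_n^{\o,\e}$ of the path-frozen PDEs $(E)^\o_\e$ (Lemmas \ref{lem:frozen construct} and \ref{lem: construct psi}), and then verifies directly that $u:=\ol u=\ul u$ is BUC (via the uniform estimate \eqref{eq:theta uni} on $\theta_0^{\o,\e}$) and satisfies the viscosity property (Section \ref{subsec: visco}). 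The regularity of $u$ is not obtained from abstract barriers but from the quantitative estimate $|\theta_0^{\o^1,\e}-\theta_0^{\o^2,\e}|\le\rho_\e(d^e(\o^1,\o^2))$ combined with \eqref{estim: u and theta}; your sketch ``propagate the modulus of $\xi$ to a modulus for $u$'' has no mechanism to produce such an estimate without first building the $\theta_n^{\o,\e}$. In short, the path-frozen construction is not a technical detail used only near the boundary---it is the engine for both $\ol u=\ul u$ and the BUC regularity, and your proposal does not supply a substitute for it.
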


\section{Comparison result}\label{sec: comparison}

\subsection{Partial comparison}\label{subsec:partial comparison}

Similar to \cite{ETZ3}, we introduce the class of piecewise smooth processes in our time-invariant context.

\begin{defn}\label{Def: overbar C2}{\rm
Let $u:\mathcal{Q}\rightarrow\mathbb{R}$.
We say $u\in\overline{C}^{2}(\mathcal{Q})$, if $u$ is bounded, 
process $\{u_t\}_{t\in\dbR^+}$ is continuous in $t$, and there exists
an increasing sequence of $\mathbb{F}$-stopping times $\left\{ \ch_{n}\right\}_{n\ge 0} $ ($\ch_0=0$) such that

\no {\rm (i)}\quad for each $i \ge 0$ and $\o\in \cQ$, $\D\ch_{i,\o}:=\ch_{i+1}^{\ch_i,\o}-\ch_i(\o)$ is a stopping time in $\cH$
whenever $\ch_{i}(\omega)<\ch_{Q}(\omega)<\infty$, i.e. there is a set $O_{i,\o}\in\cR$ such that $\D \ch_{i,\o} (\o') = \inf\{t:\o'_t\notin O_{i,\o}\}$; 

\no {\rm (ii)}\quad for each $i\ge 0$ and $\o\in\cQ$, we have 
$$u^{\o_{\ch_i\we\cdot}}\in\mbox{\rm BUC}(\cO_{i,\o})\cap C^2(\cO_{i,\o});$$

\no {\rm (iii)}\quad $\left\{ i:\ch_{i}(\omega)<\ch_{Q}(\omega)\right\} $ is finite $\mathcal{P}^\infty$-q.s.
and $\lim_{i\rightarrow\infty}\mathcal{C}_{0}^{L}\left[\ch_{i}^{\omega}<\ch_{Q}^{\omega}\right]=0$
for all $\omega\in\mathcal{Q}$ and $L>0$.

}
\end{defn}

The rest of the subsection is devoted to the proof of the following partial comparison result.

\begin{prop}\label{prop: partial comparison}
Let Assumption \ref{assump: generator}
hold true. Let $u \in\overline{C}^{2}(\mathcal{Q})$, $v \in\mbox{\rm BUC}(\cQ)$ be a $\cP^L$-viscosity
sub- and supersolution of PPDE (\ref{PPDE}) for some $L>0$, respectively.
If $u \leq v $ on $\pa Q$, then $u\leq v$
in ${\rm cl}(\cQ)$. A similar result holds if we exchange the roles of
$u$ and $v$.
\end{prop}

In preparation to the proof of Proposition \ref{prop: partial comparison}, we prove the following lemma.

\begin{lem}\label{lem:omega^*}
Let $T>0$, $D\in\cR$ and $X\in\mbox{\rm BUC}(\cD)$ and non-negative. Denote $\ch:=\ch_D \we T$. Assume that $X_0>\ol\cE^L[X_{\ch}]$, then there exists $\o^*\in\cD$ and $t^*:=\bar t(\o^*)$ such that 
$$X(\o^*)~=~\ol\cS^L_{t^*}\big[X_{\ch\we\cd}\big](\o^*) 
\q\mbox{and}\q 
X(\o^*)~>~0.$$
\end{lem}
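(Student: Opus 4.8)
The plan is to exploit the Snell-envelope characterization of Lemma~\ref{thm:optimal stopping} together with the strict inequality $X_0 > \ol\cE^L[X_{\ch_D}]$ to locate a path at which the obstacle is touched by its Snell envelope, and then argue that at such a point the obstacle value must be strictly positive. First I would set $Y := \ol\cS^L[X_{\ch_D\we\cd}]$ and $\t^* := \inf\{t\ge 0: Y_t = X_t\}$, so that by Lemma~\ref{thm:optimal stopping} we have $Y_{\t^*} = X_{\t^*}$, with $Y$ an $\ol\cE^L$-supermartingale on $[0,\ch_D]$ and an $\ol\cE^L$-martingale on $[0,\t^*]$. Since $X$ is $\mbox{\rm BUC}(\cD)$ and hence bounded, $\t^*$ is a genuine stopping time taking values in $[0,\ch_D]$; moreover $\ch_D<\infty$ $\cP^L$-q.s.\ by Proposition~\ref{prop: c1 stopping}, so $\t^*$ is finite q.s.

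Next I would show that $\t^* > 0$ does \emph{not} hold identically, or rather the reverse: the strict inequality is used to rule out $\t^*\equiv 0$ being useless and to get positivity. Concretely, if $\t^* = 0$ on a full $\cP^L$-null complement we would have $Y_0 = X_0$; but by the martingale property on $[0,\t^*]$ combined with the supermartingale property on $[0,\ch_D]$ and the tower property (Lemma~\ref{lem: tower property}), $Y_0 = \ol\cE^L[Y_{\t^*}] = \ol\cE^L[X_{\t^*}]$, and if this equals $X_0$ while $X_0 > \ol\cE^L[X_{\ch_D}]$, one extracts a near-optimal $\dbP$ and a path along which $\t^*$ is positive on a set of positive $\dbP$-measure. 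The key mechanism is: on $\{\t^*>0\}$, continuity of the path of $X$ and of $Y$ forces $Y_{\t^*} = X_{\t^*}$ at a first touching time with $\bar t(\o)>0$ interpretable as $t^*$, and because $Y$ is a (nonlinear) martingale on $[0,\t^*]$ starting from $Y_0 = X_0 > 0$ — here we use $X_0 > \ol\cE^L[X_{\ch_D}] \ge \inf_\dbP \dbE^\dbP[X_{\ch_D}] \ge 0$ since $X\ge 0$, hence $X_0>0$ — the value $Y_{\t^*}(\o^*) = X_{t^*}(\o^*)$ stays strictly positive along a suitably chosen path: one picks $\dbP_n$ with $\dbE^{\dbP_n}[X_{\t^*}] \to Y_0 > 0$ and then uses that $X_{\t^*} \ge 0$ together with $\dbE^{\dbP_n}[X_{\t^*}]>0$ to find a path $\o^*$ in the support with $X_{t^*}(\o^*)>0$ and $X_{t^*}(\o^*) = \ol\cS^L_{t^*}[X_{\ch_D\we\cd}](\o^*)$, the last identity coming from the dynamic programming / tower property at the stopping time $\t^*$ together with $Y_{\t^*}=X_{\t^*}$.

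To make the selection of $\o^*$ rigorous I would use the tower property of $\ol\cE^L$ at $\t^*$: $X_0 = Y_0 = \ol\cE^L_0\big[Y_{\t^*}\big] = \ol\cE^L_0\big[X_{\t^*}\big] = \sup_{\dbP\in\cP^L}\dbE^\dbP\big[X_{\t^*}\big]$, so for each $n$ there is $\dbP_n$ with $\dbE^{\dbP_n}[X_{\t^*}] \ge X_0 - 1/n > 0$ for $n$ large, forcing $\dbP_n\big[X_{\t^*}>0\big] > 0$. On this event we have $\t^*>0$ (since $X_0>0$ means $Y_0 = X_0$ could a priori give $\t^*=0$, so here I would instead invoke $X_0 > \ol\cE^L[X_{\ch_D}]$: if $\t^*=0$ on the whole event then $X_{\t^*} = X_0$ a.s.\ and we would not gain a nontrivial path, so the more careful route is to note that the event $\{\t^*>0, X_{\t^*}>0\}$ has positive $\dbP_n$-measure — otherwise $\dbE^{\dbP_n}[X_{\t^*}\mathbf{1}_{\t^*=0}] = X_0\,\dbP_n[\t^*=0]$ and combining with the bound forces $\dbP_n[\t^*=0]$ close to $1$, which together with the supermartingale inequality $X_0 = Y_0 \ge \ol\cE^L[X_{\ch_D}]$ contradicts strictness). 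On any path $\o^*$ in that event, setting $t^* := \bar t(\o^*) = \t^*(\o^*) > 0$, the identity $Y_{\t^*}=X_{\t^*}$ gives $X_{t^*}(\o^*) = Y_{t^*}(\o^*) = \ol\cS^L_{t^*}[X_{\ch_D\we\cd}](\o^*)$, since $Y = \ol\cS^L[X_{\ch_D\we\cd}]$ by definition, and $X_{t^*}(\o^*)>0$ by the choice of the event, with $\o^*\in\cD$ because $\t^*\le\ch_D$. This yields both required properties.

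\textbf{Main obstacle.} The delicate point is the selection step: passing from "$\sup_{\dbP}\dbE^\dbP[X_{\t^*}] = X_0 > \ol\cE^L[X_{\ch_D}]$" to the existence of an \emph{actual path} $\o^*$ with $\t^*(\o^*)>0$ and $X_{\t^*}(\o^*)>0$ simultaneously. One has to be careful that the near-optimizing measures $\dbP_n$ do not concentrate their mass on $\{\t^*=0\}$; this is precisely where the strict gap $X_0 > \ol\cE^L[X_{\ch_D}]$ (rather than just $X_0>0$) is essential, via the supermartingale estimate $\dbE^{\dbP}[X_{\t^*}] \ge \dbE^{\dbP}[X_{\ch_D}]$ on $[\t^*,\ch_D]$ combined with $X_0 = \dbE^\dbP$-limit, to conclude that a positive fraction of the mass sits on $\{\t^*>0\}$. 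I expect the BUC/pathwise continuity hypotheses on $X$ to ensure $Y$ is pathwise continuous so that $\{\t^*>0\}$ is well-behaved and $X_{\t^*}$ depends only on $\o_{\t^*\we\cd}\in\cD$, legitimizing the notation $\o^*$, $t^*=\bar t(\o^*)$.
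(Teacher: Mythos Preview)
Your starting point is right: set $Y:=\ol\cS^L[X_{\ch_D\we\cd}]$, $\t^*:=\inf\{t:Y_t=X_t\}$, and invoke the Snell-envelope characterization so that $Y_0=\ol\cE^L[X_{\t^*}]\ge X_0>\ol\cE^L[X_{\ch_D}]$. From there, however, you chase the wrong dichotomy. You spend most of the argument trying to secure $\t^*(\o^*)>0$ and worrying about mass concentrating on $\{\t^*=0\}$. But $t^*>0$ is \emph{not} part of the conclusion: if $\t^*=0$ at the zero path then $Y_0=X_0$, the path $\o^*=0$ lies in $\cD$, and $X_0>0$ already gives both required properties. The genuine constraint hidden in ``$\o^*\in\cD$'' is that the stopped path must remain \emph{inside} $D$, i.e.\ one needs $\t^*(\o')<\ch_D(\o')$; your claim ``$\o^*\in\cD$ because $\t^*\le\ch_D$'' is not enough, since equality would place $\o^*$ on $\pa\cD$.

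Consequently your selection step has a real gap: from $\dbE^{\dbP_n}[X_{\t^*}]>0$ you only get a path with $X_{\t^*}>0$, but nothing prevents that path from satisfying $\t^*=\ch_D$. What is missing is exactly the argument that the paper supplies: suppose $X_{\t^*}=0$ on the whole event $\{\t^*<\ch_D\}$; then $Y_{\t^*}=0$ there, and since $Y_{\t^*}\ge \ol\cE^L_{\t^*}[X_{\ch_D}]\ge 0$ with $X\ge 0$, one gets $X_{\ch_D}=0$ on $\{\t^*<\ch_D\}$, hence $X_{\t^*}=X_{\ch_D}$ identically, forcing $Y_0=\ol\cE^L[X_{\t^*}]=\ol\cE^L[X_{\ch_D}]<X_0\le Y_0$, a contradiction. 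This is precisely where the strict inequality $X_0>\ol\cE^L[X_{\ch_D}]$ enters --- not to rule out $\t^*=0$, but to rule out $X_{\t^*}\equiv 0$ on $\{\t^*<\ch_D\}$. Once you reorient the argument around $\{\t^*<\ch_D\}$ instead of $\{\t^*>0\}$, the rest of your outline (pick $\o'$ in that event, set $\o^*:=\o'_{\t^*(\o')\we\cd}$) goes through.
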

\begin{proof}
Denote $Y$ as the Snell envelop of $X_{\ch\we\cd}$, i.e. $Y_t:=\ol\cS_t^L\big[X_{\ch\we\cdot}\big]$. By Lemma \ref{thm:optimal stopping}, the stopping time $\t^*:=\inf\{t:X_t=Y_t\}$ defines an optimal stopping rule. So, we have
$$\ol\cE^L[X_{\t^*}]=Y_0\geq X_0>\ol\cE^L[X_{\ch}].$$
Hence $\{\t^*<\ch\}\neq \phi$. Suppose that $X_{\t^*}=0$ on $\{\t^*<\ch\}$. Then, 
$$
0=X_{\t^*}1_{\{\t^*<\ch\}}(\o)=Y_{\t^*}1_{\{\t^*<\ch\}}(\o) \geq \ol\cE^L\big[(X_\ch)^{\t^*(\o),\o}\big]1_{\{\t^*<\ch\}}(\o)\ge 0.
$$
The last inequality is due to the fact $X\ge 0$. Therefore $X_{\ch}1_{\{\t^*<\ch\}}=0$. It follows that $X_{\t^*}=X_{\ch}$ on $\{\t^*<\ch\}$. Thus, we conclude that
\beaa
X_0\leq Y_0=\ol\cE^L[X_{\t^*}]=\ol\cE^L[X_{\ch}]<X_0.
\eeaa
This contradiction implies that $\{\t^*<\ch,X_{\t^*}>0\}\neq \phi$. Finally, take $\o \in \{\t^*<\ch,X_{\t^*}>0\}$, and then $\o^*:=\o_{\t^*(\o)\we\cdot}$ is a path satisfying the requirements.
\end{proof}

\vspace{3mm}

\no {\bf Proof of Proposition \ref{prop: partial comparison}}\quad
Recall the notation $\ch_{i}$, $\D \ch_{i,\o}$ and $O_{i,\omega}$ in Definition
\ref{Def: overbar C2}. We devide the proof in two steps.

\no \underline{\it Step 1.}\q We first show that
\beaa
(u - v)_{\ch_{i}}^{+}(\omega)
~\leq~\ol{\cE}^{L}\left[\left(u^{\ch_{i},\omega} - v^{\ch_{i},\omega}\right)_{\D \ch_{i,\o}}^{+}\right]
~=~  \ol{\cE}^{L}\left[\left(\left(u_{\ch_{i+1}} - v_{\ch_{i+1}}\right)^+\right)^{\ch_{i},\omega}\right],
\q\mbox{for all}~~i\ge 0, ~\o \in \cQ.
\eeaa
Without loss of generality, we set $i=0$. Assume the contrary, i.e.
\[
(u-v)^{+}(\boldsymbol{0})-\overline{\mathcal{E}}^{L}\left[\left(u-v\right)_{\ch_{1}}^{+}\right]>0.
\]
Denote $X:=(u-v)^{+}$. Since $\lim_{T\rightarrow\infty} \cC^L [\ch_1 \ge T]=0$ (Proposition \ref{prop: c1 stopping}) and $u,v$ are both bounded, there exists $T>0$ such that
\beaa
X_0-\overline{\mathcal{E}}^{L}\left[X_\ch\right]>0,
\q
\mbox{with}~~\ch:=\ch_1\we T.
\eeaa
 Then, by Lemma \ref{lem:omega^*},  there exists $\o^*\in \cO_{0,\bf 0}$ and $t^*:=\bar t(\o^*)$ such that
\be\label{Xpartial}
X(\o^*)=\ol\cS^L_{t^*}[X_{\ch\we\cdot}](\o^*)
\q\mbox{and}\q
X(\o^*)>0.
\ee
Since $u \in\overline{C}^{2}(\mathcal{Q})$, in particular $u \in C^2(\cO_{0,\bf 0})$, we have $\f:=u^{\o^*}\in C^2\big(\cO_{0,\bf 0}^{\o^*}\big)$ (Recall that for a set $D\in\cR$ and $\o\in\O^e$, we define $D^{\o}:=D^{\o_{\bar t(\o)}}$ and correspondingly we have the definition of $\cD^\o$). Together with \eqref{Xpartial}, we get $\f\in \overline{\mathcal{A}}^{L} v(\omega^{*})$. By the $\cP^L$-viscosity supersolution property of $v$ and Assumption \ref{assump: generator}, this implies that
\begin{eqnarray*}
0  \leq  -G\left(\cdot,v,\partial_{\omega}\varphi_0,\partial_{\omega\omega}^{2}\varphi_0 \right)\left( \omega^{*}\right)
  \leq  -G\left(\cdot,u,\partial_{\omega}u,\partial_{\omega\omega}^{2}u\right)\left(\omega^{*}\right)-\lambda\big(X(\omega^*)\big)
  <  -G\left(\cdot,u,\partial_{\omega}u,\partial_{\omega\omega}^{2}u\right)\left(\omega^{*}\right).
\end{eqnarray*}
This is in contradiction with the classical subsolution property of $u$.

\vspace{3mm}
\no \underline{\it Step 2.}\q By the result of Step 1 and the tower property of $\overline{\mathcal{E}}^{L}$ stated in Lemma \ref{lem: tower property}, we have
\beaa
\ol{\cE}^{L}\left[\left(u-v\right)_{\ch_{i}}^{+}\right]
&\le & \ol{\cE}^{L}\left[\left(u-v\right)_{\ch_{i+1}}^{+}\right]
\q\mbox{for all}\q i\ge 0.
\eeaa
It follows by induction that
\beaa
(u - v)^{+}(\boldsymbol{0})~\leq~\overline{\mathcal{E}}^{L}\left[\left(u-v\right)_{\ch_{i}}^{+}\right]
\q \mbox{for all}~~i\geq 1.
\eeaa
Then we obtain
\beaa
(u - v)^{+}(\boldsymbol{0}) 
~\le~ 
\overline{\mathcal{E}}^{L}\left[\left(u-v\right)_{\ch_{Q}}^{+}\right]+\overline{\mathcal{E}}^{L}\left[\left(u-v\right)_{\ch_{i}}^{+}-\left(u-v\right)_{\ch_{Q}}^{+}\right].
\eeaa
By Proposition \ref{prop: c1 stopping}, we have $\lim_{i\rightarrow\infty} \mathcal{C}^{L}\left[\ch_{i}<\ch_{Q}\right] = 0$. Since $u,v$ are both bounded, we have
\beaa
(u -v)^{+}(\boldsymbol{0}) 
~\leq ~\overline{\mathcal{E}}^{L}\left[\left(u -v\right)_{\ch_{Q}}^{+}\right]
~=~0.
\eeaa
\qed

\subsection{The Perron type construction}
Define the following two functions:
\begin{equation}\label{eq:bar u under u}
\overline{u}(\omega):=\inf\left\{ \psi(\omega):\psi\in\overline{\mathcal{D}}_{Q}^{\xi}(\omega)\right\} ,\q \underline{u}(\omega):=\sup\left\{ \psi(\omega):\psi\in\underline{\mathcal{D}}_{Q}^{\xi}(\omega)\right\} ,
\end{equation}
where
\[
\overline{\mathcal{D}}_{Q}^{\xi}(\omega):=\left\{ \psi\in\overline{C}^{2}(\mathcal{Q}^{\omega}):\mathcal{L}^{\omega}\psi\geq0\ \text{on}\ \mathcal{Q},\ \psi\geq\xi^{\omega}\ \text{on}\ \partial\mathcal{Q}\right\} ,
\]
\[
\mathcal{\underline{D}}_{Q}^{\xi}(\omega):=\left\{ \psi\in\overline{C}^{2}(\mathcal{Q}^{\omega}):\mathcal{L}^{\omega}\psi\leq0\ \text{on}\ \mathcal{Q},\ \psi\leq\xi^{\omega}\ \text{on}\ \partial\mathcal{Q}\right\} .
\]
As a direct corollary of Proposition \ref{prop: partial comparison}, we have:
\begin{cor}
Let $L>0$ be constant. Under Assumption \ref{assump: generator}, for all $\cP^L$-viscosity supersolutions (resp. subsolution) $u\in {\rm BUC}(\cQ)$ such that $u\geq \xi$ (resp. $u\leq \xi$) on $\pa\cQ$, we have $u\geq \ul u$ (resp. $u\leq \ol u$) on $\cQ$.
\end{cor}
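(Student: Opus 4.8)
The plan is to deduce both assertions directly from the partial comparison principle, Proposition~\ref{prop: partial comparison}, applied separately at each path $\o\in\cQ$ against each competitor entering the definitions \eqref{eq:bar u under u}, after passing to the shifted equation.

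I first prove the supersolution statement; the other is symmetric. Fix $\o\in\cQ$ and an arbitrary $\psi\in\ul\cD_Q^\xi(\o)$, so that $\psi\in\ol C^2(\cQ^\o)$, $\cL^\o\psi\le 0$ on $\cQ^\o$, and $\psi\le\xi^\o$ on $\pa\cQ^\o$. The first step is the routine shifting observation: if $u$ is a {\rm BUC} viscosity supersolution of \eqref{PPDE} with $u\ge\xi$ on $\pa\cQ$, then $u^\o:=u(\o\bar{\otimes}\cdot)$ is a {\rm BUC} viscosity supersolution of the shifted path-dependent PDE on $\cQ^\o$ with generator $G^\o$; here $G^\o$ still satisfies Assumption~\ref{assump: generator} with the same constants (by the uniform continuity of $G$ on $\O^e$ and the invariance of the structural conditions under the shift), and $u^\o\ge\xi^\o$ on $\pa\cQ^\o$ since concatenation maps $\pa\cQ^\o$ into $\pa\cQ$. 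The second step is to apply Proposition~\ref{prop: partial comparison}, which holds verbatim on $\cQ^\o$ with generator $G^\o$ by the very same proof, to the pair $u^1:=\psi$ and $u^2:=u^\o$: indeed $\psi\in\ol C^2(\cQ^\o)$ with $\cL^\o\psi\le 0$, $u^\o$ is a {\rm BUC} viscosity supersolution, and on $\pa\cQ^\o$ one has $\psi\le\xi^\o\le u^\o$. Hence $\psi\le u^\o$ on ${\rm cl}(\cQ^\o)$; evaluating at the null path yields $\psi(\o)\le u(\o)$. Taking the supremum over $\psi\in\ul\cD_Q^\xi(\o)$ and then over $\o\in\cQ$ gives $\ul u\le u$ on $\cQ$.

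For the subsolution case, let $u$ be a {\rm BUC} viscosity subsolution with $u\le\xi$ on $\pa\cQ$ and fix $\psi\in\ol\cD_Q^\xi(\o)$, so that $\cL^\o\psi\ge 0$ on $\cQ^\o$ and $\psi\ge\xi^\o$ on $\pa\cQ^\o$. Arguing as above with the version of Proposition~\ref{prop: partial comparison} in which the roles of $u^1$ and $u^2$ are exchanged (so $u^1:=\psi$ is the piecewise-smooth supersolution and $u^2:=u^\o$ the viscosity subsolution, with $\psi\ge\xi^\o\ge u^\o$ on $\pa\cQ^\o$), one obtains $\psi\ge u^\o$ on ${\rm cl}(\cQ^\o)$, hence $u(\o)\le\psi(\o)$, and taking the infimum over $\psi$ gives $u\le\ol u$ on $\cQ$.

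The only point that is not completely immediate — and the main, rather mild, obstacle — is the shifting step: one must verify that $u^\o$ genuinely inherits the {\rm BUC} viscosity supersolution property on $\cQ^\o$, that $G^\o$ still meets Assumption~\ref{assump: generator}, and that membership in $\ol C^2$ is preserved, so that Proposition~\ref{prop: partial comparison} is legitimately applicable to the pair $(\psi,u^\o)$. Once this is granted, the Corollary is a one-line consequence of partial comparison together with a supremum (resp.\ infimum) over the competitors appearing in \eqref{eq:bar u under u}.
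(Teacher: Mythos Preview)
Your proof is correct and follows exactly the route the paper intends: the paper merely states the result as ``a direct corollary of Proposition~\ref{prop: partial comparison}'' without further argument, and your write-up spells out the shifting to $\cQ^\o$, the application of partial comparison to the pair $(\psi,u^\o)$, and the supremum/infimum over competitors in $\ul\cD_Q^\xi(\o)$ (resp.\ $\ol\cD_Q^\xi(\o)$), which is precisely what the paper is taking for granted. Your identification of the only non-immediate point --- that the viscosity property, the BUC regularity, the $\ol C^2$ membership, and Assumption~\ref{assump: generator} all survive the shift --- is accurate and is the only thing one needs to check; the paper treats this as routine.
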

In order to prove the comparison result of Theorem \ref{comparison}, it remains to show the following result.

\begin{prop}\label{prop:u=u}
Let $\xi\in \mbox{\rm BUC}(\pa \cQ)$.
Under Assumptions \ref{assump: generator} and \ref{assmp: approx PDE}, we have $\ol u=\ul u$.
\end{prop}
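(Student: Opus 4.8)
The plan is to show $\underline u \le \overline u$ (the reverse inequality $\overline u \ge \underline u$ being either trivial from the definitions, or obtained symmetrically). The strategy mirrors the Perron argument in \cite{ETZ3}: one shows that $\overline u$ is a BUC viscosity subsolution and $\underline u$ is a BUC viscosity supersolution of the PPDE \eqref{PPDE} satisfying the correct boundary data on $\partial\mathcal{Q}$, and then invokes the partial comparison Proposition \ref{prop: partial comparison} — applied with one of the two functions being approximated by elements of $\overline C^2(\mathcal Q)$ — to conclude $\underline u \le \overline u$. More precisely, for any $\psi \in \underline{\mathcal D}_Q^\xi(0)$ one has $\psi \in \overline C^2(\mathcal Q)$ with $\mathcal L\psi \le 0$ and $\psi \le \xi$ on $\partial\mathcal Q$; since $\overline u$ will be shown to be a supersolution with $\overline u \ge \xi$ on $\partial\mathcal Q$, Proposition \ref{prop: partial comparison} gives $\psi \le \overline u$ on $\mathrm{cl}(\mathcal Q)$, and taking the supremum over $\psi$ yields $\underline u \le \overline u$. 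Symmetrically $\overline u \le \underline u$, so $\overline u = \underline u$.

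The substantive work is therefore to establish three things about $\overline u$ (and, symmetrically, $\underline u$): first, that $\overline u \in \mbox{BUC}(\mathcal Q)$ and attains the boundary value $\xi$ continuously on $\partial\mathcal Q$; second, that the family $\overline{\mathcal D}_Q^\xi$ is nonempty and stable under the relevant operations so that $\overline u$ is well-defined and finite; and third, the viscosity property. For the viscosity subsolution property of $\overline u$, I would argue by contradiction in the usual Perron fashion: if at some $\omega_0 \in \mathcal Q$ there were a test function $\varphi \in \underline{\mathcal A}^L \overline u(\omega_0)$ violating the subsolution inequality, then using the strict ellipticity and the structure of $G$ (Assumption \ref{assump: generator}), together with Assumption \ref{assmp: approx PDE} on the path-frozen PDE $(E)^{\omega_0}_\varepsilon$, one can locally "bump" $\overline u$ upward by solving (or sandwiching via $\overline v = \underline v$) the frozen PDE on $O_\varepsilon(\omega_0)$ to produce a strictly larger member of $\overline{\mathcal D}_Q^\xi$, contradicting the definition of $\overline u$ as an infimum. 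This is where Assumption \ref{assmp: approx PDE} enters decisively: it supplies the smooth sub/supersolutions of the frozen equation needed to paste a $\overline C^2$ modification, and it is what replaces the classical-solvability of the local Dirichlet problem in the Markovian Perron method.

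The main obstacle, as flagged in the introduction, is the continuity of $\overline u$ up to and on the boundary $\partial\mathcal Q$, because the hitting time $\mathrm h_Q(\omega)$ is not continuous in $\omega$; establishing \eqref{BUC} for $\overline u$ with a (convex) modulus, and checking $\overline u_{\mathrm h_Q} = \xi$, requires care. I expect this to be handled by constructing explicit barrier functions in $\overline C^2(\mathcal Q)$ near the boundary — exploiting that $Q$ is open, bounded and convex with $0 \in Q$ so that the frozen elliptic operators are uniformly nondegenerate — and by using Proposition \ref{prop: c1 stopping} (finiteness of $\overline{\mathcal E}^L[\mathrm h_Q]$ and the vanishing of $\mathcal C^L[\mathrm h_n < \mathrm h^x_D]$ uniformly in $x$) to control the behavior of candidate functions as the path approaches $\partial\mathcal Q$. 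The remaining steps — nonemptiness of $\overline{\mathcal D}_Q^\xi$ via large constants or quadratics, and the BUC bound in the interior via the modulus of $\xi$ and $G$ — are routine once the boundary barriers are in place.
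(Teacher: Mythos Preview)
Your approach is circular. Proposition \ref{prop:u=u} is precisely the missing ingredient that turns the partial comparison (Proposition \ref{prop: partial comparison}) into the full comparison (Theorem \ref{comparison}); you cannot invoke a comparison between a viscosity subsolution $\overline u$ and a viscosity supersolution $\underline u$ to prove it. Partial comparison only compares a $\overline C^2$ classical sub/supersolution against a viscosity one, and neither $\overline u$ nor $\underline u$ lies in $\overline C^2$. Concretely, both of your ``symmetric'' applications yield only the easy direction $\underline u \le \overline u$: if $\overline u$ is a supersolution then $\psi\le\overline u$ for every $\psi\in\underline{\mathcal D}_Q^\xi$, hence $\underline u\le\overline u$; and if $\underline u$ is a subsolution then $\underline u\le\psi$ for every $\psi\in\overline{\mathcal D}_Q^\xi$, hence again $\underline u\le\overline u$. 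To get $\overline u\le\underline u$ from viscosity properties alone you would need to compare two non-smooth functions, which is exactly Theorem \ref{comparison}. (Incidentally, your bump argument is also backwards: $\overline u$ is an infimum over $\overline{\mathcal D}_Q^\xi$, so to derive a contradiction you must produce a \emph{smaller} element of $\overline{\mathcal D}_Q^\xi$, not a larger one.)

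The paper's proof proceeds by an entirely different mechanism and never uses any viscosity property of $\overline u$ or $\underline u$ at this stage (those are established only afterwards, in Section \ref{subsec: visco}). Instead, one builds explicit near-optimal elements of $\overline{\mathcal D}_Q^\xi$ and $\underline{\mathcal D}_Q^\xi$ by hand. Lemma \ref{lem:frozen construct} solves a cascade of path-frozen PDEs $(E)^{\omega\bar\otimes\pi_n}_\epsilon$ on small balls to produce functions $\theta_n^{\omega,\epsilon}$; then Lemma \ref{lem: construct psi} uses Assumption \ref{assmp: approx PDE} to replace each piece by a smooth classical supersolution, yielding $\psi^\epsilon\in\overline C^2(\mathcal Q)$ with $\mathcal L^{\hat\pi_n}\psi^\epsilon\ge 0$ and $\psi^\epsilon(0)=\theta_0^\epsilon(0)+\epsilon$. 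A small shift by $\rho(2\epsilon)+\lambda^{-1}(\rho(2\epsilon))$ absorbs the error between $G(\omega,\cdot)$ and $G(\hat\pi_n,\cdot)$, making $\overline\psi^\epsilon\in\overline{\mathcal D}_Q^\xi$, so $\overline u(0)\le\theta_0^\epsilon(0)+\epsilon+\rho(2\epsilon)+\lambda^{-1}(\rho(2\epsilon))$. The mirror construction gives $\underline u(0)\ge\theta_0^\epsilon(0)-\epsilon-\rho(2\epsilon)-\lambda^{-1}(\rho(2\epsilon))$, and sending $\epsilon\to 0$ yields $\overline u(0)=\underline u(0)$. The key idea you are missing is this explicit $\overline C^2$ construction via the path-frozen cascade; Assumption \ref{assmp: approx PDE} is used there, not in a local Perron bump.
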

The proof of this proposition is reported in Subsection \ref{sect:proof u=u}, and requires the preparations in Subsection \ref{subsection: represent}.

\subsection{Preliminary: HJB equations}\label{subsection: represent}
In this subsection, we recall the relation between HJB equations and stochastic control problems. Recall the constants $L_{0}$ and $C_{0}$ in Assumption \ref{assump: generator}
and consider two functions:
\be\label{eq:over g}
\left.\ba{lll}
\overline{g}(y,z,\gamma)
:=
C_0 + L_{0}\left|z\right|+L_0y^-+\sup_{\beta\in [\sqrt{2/L_{0}}I_{d},\sqrt{2L_{0}}I_{d}]}\frac12\beta^{2}:\gamma,\\
\underline{g}(y,z,\gamma):=-C_0-L_{0}\left|z\right|-L_0y^++\inf_{\beta\in [\sqrt{2/L_{0}}I_{d},\sqrt{2L_{0}}I_{d}]}\frac12\beta^{2}:\gamma.
\ea\right.
\ee
Then for all nonlinearities $G$ satisfying Assumption \ref{assump: generator},
it holds $\underline{g}\leq G\leq\overline{g}$. Consider the HJB equations:
$$
\overline{\bf L}u:=-\overline{g}(u,\pa_x u,\pa^{2}_{xx} u)=0
\q\text{and}\q
\underline{\bf L}u:=-\underline{g}(u,\pa_x u,\pa^{2}_{xx} u)=0.
$$
In the next lemma, we will show that the solutions to the PDE's above with the boundary condition $h_D$ have the stochastic representations: 
\be\label{def:wbar}
\left.\ba{lll}
&\overline{w}(x)
 := 
\sup_{b\in\mathbb{H}^{0}\left(\left[0,L_{0}\right]\right)}\ol{\mathcal{E}}^{L_{0}}\Big[h_{D}(B_{\ch_{D}^{x}})e^{-\int_{0}^{\ch_{D}^{x}}b_{r}dr}+C_{0}\int_0^{\ch^x_D}e^{-\int_{0}^{t}b_{r}dr}dt\Big],&\\
&\underline{w}(x)
 := 
\inf_{b\in\mathbb{H}^{0}\left(\left[0,L_{0}\right]\right)}\underline{\mathcal{E}}^{L_{0}}\Big[h_{D}(B_{\ch_{D}^{x}})e^{-\int_{0}^{\ch_{D}^{x}}b_{r}dr}+C_{0}\int_0^{\ch^x_D}e^{-\int_{0}^{t}b_{r}dr}dt\Big],&
\ea\right.
\ee
where we use the new notation $$\ch^x_D ~:= ~ \ch_{D^x}$$ so as to shorten the formulas.

\begin{lem}\label{lem: s.t. uc} 

Let $h_{D}(x):=\mathcal{\overline{E}}^{L_{0}}\big[v(\ch_D^x,B_{\ch_{D}^x\wedge\cdot})\big]$ for some $v\in\text{\rm BUC}(\mathbb{R}^{+}\times\Omega^{e})$. Then $\ol w$ and $\ul w$ are the unique C-L viscosity solutions in $\mbox{\rm BUC}({\rm cl}(D))$ to the equations $\ol{\bf L} u=0$ and $\ul{\bf L} u=0$, respectively, with the boundary condition $u=h_D$ on $\pa D$.
\end{lem}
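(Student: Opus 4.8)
The plan is to identify $\overline w$ and $\underline w$ as value functions of stochastic control problems and then verify both the PDE interior property (in the viscosity sense) and the boundary condition, concluding by the uniqueness statement for the Dirichlet problem of the HJB equations $\overline{\mathbf L}u=0$, $\underline{\mathbf L}u=0$ on $\mathrm{cl}(D)$. I will only treat $\overline w$; the argument for $\underline w$ is symmetric. First I would rewrite $\overline g(y,z,\gamma)=C_0+L_0|z|+\sup_{b\in[0,L_0]}\bigl(-b y\bigr)+\sup_{\beta}\frac12\beta^2{:}\gamma$, so that the discounted/running-cost structure in the definition of $\overline w$ matches exactly a Bellman operator: the $\sup$ over $b\in\mathbb H^0([0,L_0])$ corresponds to the $\sup_{b\in[0,L_0]}(-by + C_0)$ term, the $\overline{\mathcal E}^{L_0}$ corresponds to $\sup_{\beta\in[\sqrt{2/L_0}I_d,\sqrt{2L_0}I_d]}$ (together with the drift $\alpha$, which here is absorbed into the $L_0|z|$ term, i.e.\ $\sup_{|\alpha|\le L_0}\alpha\cdot z = L_0|z|$), and the exit from $D$ at $\mathrm H_D^x$ gives the Dirichlet datum $h_D$.

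Next I would establish the three required properties. (1) \emph{BUC regularity.} Since $h_D$ is bounded and uniformly continuous (it is itself a nonlinear expectation of a BUC function $v$, and by Proposition \ref{prop: c1 stopping} the exit time $\mathrm H_D^x$ has finite nonlinear expectation and depends continuously in an $L^1$-sense on $x$, uniformly over $D$), and since the running cost is bounded by $C_0\overline{\mathcal E}^{L_0}[\mathrm H_D^x]<\infty$, standard estimates on the dependence of $B_{\mathrm H_D^x}$ and $\mathrm H_D^x$ on the starting point $x$ give that $\overline w\in\mathrm{BUC}(\mathrm{cl}(D))$; I would use the explicit modulus coming from $\rho$ (assumed convex) together with the scaling bounds $\beta\in[\sqrt{2/L_0}I_d,\sqrt{2L_0}I_d]$. (2) \emph{Viscosity solution property.} This is the dynamic programming principle for $\overline w$: using the tower property (Lemma \ref{lem: tower property}) and a localization argument on the stopping times $\mathrm H_n$ from Proposition \ref{prop: c1 stopping}, I would show $\overline w(x)=\sup_b\overline{\mathcal E}^{L_0}\bigl[\overline w(B_{\mathrm H\wedge\cdot})e^{-\int_0^{\mathrm H}b_r dr}+C_0\int_0^{\mathrm H}e^{-\int_0^t b_r dr}dt\bigr]$ for small localizing exit times $\mathrm H$, and then the usual test-function argument converts this into the statement that $\overline w$ is a viscosity sub- and supersolution of $-\overline g(u,Du,D^2u)=0$ in the interior of $D$. (3) \emph{Boundary condition.} For $x\in\partial D$ one has $\mathrm H_D^x=0$, so $\overline w(x)=h_D(x)$ trivially, and continuity up to the boundary follows from step (1); since $D$ is convex (hence satisfies an exterior-cone/regular-boundary condition), $\mathrm H_D^x\to 0$ in the relevant sense as $x\to\partial D$, so the boundary datum is attained continuously.

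Finally, having shown $\overline w\in\mathrm{BUC}(\mathrm{cl}(D))$ is a viscosity solution of $\overline{\mathbf L}u=0$ on $D$ with $u=h_D$ on $\partial D$, uniqueness in this class follows from the classical comparison principle for uniformly elliptic Dirichlet problems (the operator $\overline g$ is proper: degenerate elliptic and nonincreasing in $u$ because of the $L_0 y^-$ term once one notes $-L_0 y^-$ is nonincreasing, or more carefully one uses that the associated Bellman operator has a genuine discount $b\ge 0$), e.g.\ via \cite{CL}; this gives that $\overline w$ is \emph{the} viscosity solution, and symmetrically for $\underline w$.

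\textbf{Main obstacle.} I expect the delicate point to be the regularity and boundary behaviour in steps (1) and (3): because the exit time $\mathrm H_D^x$ is not continuous in $x$ pathwise (only in an averaged sense) and the set of controls is non-dominated, proving that $\overline w$ is uniformly continuous up to $\partial D$ — rather than merely bounded and measurable — requires careful use of the convexity of $D$ and of the uniform ellipticity bounds to control how long trajectories started near $\partial D$ stay inside. The dynamic programming principle in step (2) is also somewhat technical in the non-dominated setting, but it is by now standard given the tower property and the stopping-time approximation in Proposition \ref{prop: c1 stopping}.
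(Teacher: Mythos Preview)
Your proposal is correct and follows essentially the same approach as the paper: establish that $h_D$ and $\overline w$ are in $\mathrm{BUC}(\mathrm{cl}(D))$ via estimates on the exit time, then invoke standard arguments for the viscosity property and comparison for uniqueness. The paper's proof is in fact terser than yours --- it simply writes ``by standard argument, one can easily verify that $\overline w$ is the viscosity solution'' and does not spell out the dynamic programming principle at all.

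One small correction: the $L^1$-continuity of $\ch_D^x$ in $x$ that you need for step (1) is \emph{not} contained in Proposition \ref{prop: c1 stopping}; that proposition only gives finiteness of $\overline{\mathcal E}^{L_0}[\ch_D]$ and the tail estimate $\mathcal C^L[\ch_n<\ch_D^x]\to 0$. The modulus-of-continuity estimate $\overline{\mathcal E}^{L_0}[|\ch_D^{x_1}-\ch_D^{x_2}|]\le\rho(|x_1-x_2|)$ is a separate result, stated and proved in the Appendix as Proposition \ref{prop: estimate hitting}. The paper then chains this with the convexity of the modulus $\rho$ of $v$ and Jensen's inequality to control $|h_D(x_1)-h_D(x_2)|$ and $|\overline w(x_1)-\overline w(x_2)|$ through $\overline{\mathcal E}^{L_0}[|B_{\ch_D^{x_1}}-B_{\ch_D^{x_2}}|]\le (2L_0\,\overline{\mathcal E}^{L_0}[|\ch_D^{x_1}-\ch_D^{x_2}|])^{1/2}$. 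Your anticipated ``main obstacle'' is exactly this point, and the paper resolves it via Proposition \ref{prop: estimate hitting} rather than Proposition \ref{prop: c1 stopping}.
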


\begin{proof}
We claim and will prove in Proposition \ref{prop: estimate hitting} in Appendix that there exists a modulus of continuity $\rho$
such that
\be\label{estimate-hittingtime}
\overline{\mathcal{E}}^{L_{0}}\big[\left|\ch_D^{x_{1}}-\ch_D^{x_{2}}\right|\big]\leq\rho(|x_{1}-x_{2}|).
\ee
Since $v\in\mbox{BUC}(\dbR^+\times\O^e)$, we obtain that
\bea\label{estimateBUC}
|h_D(x_1)-h_D(x_2)|
&\le & 
\ol\cE^{L_0}\Big[|v(\ch_D^{x_1},B_{\ch^{x_1}_D\we\cdot})-v(\ch_D^{x_2},B_{\ch^{x_2}_D\we\cdot})|\Big]\notag \\
&\le & 
\rho\Big(\ol\cE^{L_0}\big[ |\ch_D^{x_1}-\ch_D^{x_2}|\big]  +  \ol\cE^{L_0}\big[ \|B_{{\ch^{x_1}_D}\we\cd}-B_{{\ch^{x_2}_D}\we\cd}\|_\infty\big]\Big),
\eea
where we used the concavity of $\rho$ (recall Remark \ref{concave_rho}) and the Jensen's inequality. Recall the definition of $\cP^L$ (each $\dbP\in\cP^L$  corresponds to a measure $\dbQ^{\a,\b}$ in an extended probability space). We have
\bea\label{estimatemoment}
\dbE^\dbP\big[\|B_{\ch^{x_1}_D\we\cd}-B_{\ch^{x_2}_D\we\cd}\|_{\infty}\big]
&\le &  \dbE^{\dbQ^{\a,\b}} \Big[\big\|\int_0^{\ch^{x_1}_D\we\cd}\a_t dt -\int_0^{\ch^{x_2}_D\we\cd}\a_t dt \big\|_\infty\Big]
+ \dbE^{\dbQ^{\a,\b}} \Big[ \| M_{\ch^{x_1}_D\we\cd}- M_{\ch^{x_2}_D\we\cd} \|_\infty^2\Big]^\frac12 \notag \\
&\le & L_0\ol\cE^{L_0}\big[|\ch^{x_1}_D-\ch^{x_2}_D|\big]
+ \Big(2L_0\ol\cE^{L_0}\big[|\ch^{x_1}_D-\ch^{x_2}_D|\big]\Big)^\frac12,~~\mbox{for all}~\dbP\in\cP^{L_0}.
\eea
In view of \eqref{estimate-hittingtime}, we conclude that $h_D\in {\rm BUC}(\dbR^d)$. Further, since $h_{D}$ is bounded and the control processes $b$ in \eqref{def:wbar} only takes non-negative values, it follows that for $x_{1},x_{2}\in D$,
\begin{eqnarray*}
\left|\overline{w}(x_{1})-\overline{w}(x_{2})\right| 
 &\leq & 
\overline{\mathcal{E}}^{L_{0}}\big[|h_{D}(B_{\ch_{D}^{x_{1}}})-h_{D}(B_{\ch_{D}^{x_{2}}})|\big]+C\overline{\mathcal{E}}^{L_{0}}\big[\left|\ch_{D}^{x_{1}}-\ch_{D}^{x_{2}}\right|\big].
\end{eqnarray*}
Since $h_D\in \mbox{BUC}(\dbR^d)$, by the same arguments in \eqref{estimateBUC} and \eqref{estimatemoment}, we conclude that $\ol w\in \mbox{BUC}({\rm cl}(D))$. Then, by a verification argument, one can easily show that $\ol w$ is the unique C-L viscosity solution to $\overline{\bf L}u=0$ with the boundary condition $h_{D}$ on $\partial D$. Similarly, we may prove the corresponding result for $\ul w$.
\end{proof}

\subsection{Proof of $\ol u=\ul u$}\label{sect:proof u=u}

Recall the two functions $\ol u, \ul u$ defined in (\ref{eq:bar u under u}). In the next lemma, we will use the path-frozen PDE's to construct the functions $\theta_{n}^{\epsilon}$, which will be needed to construct the approximations of $\overline{u}$ and $\underline{u}$ defined in (\ref{eq:bar u under u}). Recall the notation of linear interpolation in \eqref{linearintepolate}. Then 
\begin{itemize}
\item let $\big(x_1, x_2,\cds,x_n \big)\in (\ol O_\e)^n$, ${\rm x}_i:= \sum_{j=1}^i x_j$ and then denote 
\be\label{pi_n}
\pi_n:={\rm Lin}\big\{(0,0),(1,{\rm x}_1),\cdots,(n,{\rm x}_n)\big\}
\ee
 (in particular, note that $\pi_n\in\O^e$);

\item denote $\pi_{n}^{x}:={\rm Lin}\big\{\pi_{n},(n+1, {\rm x}_n+x)\big\}$ for all $x\in \ol O_\e$ (clearly, we have $\pi^x_n\in\O^e$), where we slightly abuse the notation: ${\rm Lin}\big\{\pi_{n},(n+1, {\rm x}_n+x)\big\} = {\rm Lin}\big\{(0,0),(1,{\rm x}_1),\cdots,(n,{\rm x}_n),(n+1, {\rm x}_n+x)\big\} $;

\item define a sequence of stopping times: $\ch_{0}^{x}:=0$,
\bea\label{def:seq_stop}
&\ch_{1}^{x}:=\inf\left\{ t\geq 0:x+B_{t}\notin O_{\epsilon}\right\},\q 
\ch_{i+1}^{x}:=\inf\left\{ t\geq \ch_{i}^{x}:B_{t}-B_{\ch_{i}^x}\notin O_{\epsilon}\right\} \q \mbox{for}~i\geq 1,&\\
&\mbox{and}\q \ch_i^{\o, \pi_n,x} := \ch_i^x \we \ch_{Q^{\o\bar\otimes\pi_{n}^{x}}}.\notag &
\eea
(Recall that $Q^\o$ is defined in \eqref{defn:Oepsilon});

\item given $\o\in\O$, we define
\beaa
\pi^m_n(x,\o):={\rm Lin}\Big\{\pi_n,\big(n+1, {\rm x}_n+x+\o_{\ch_{1}^{x}}\big),\cds, \big(n+m, {\rm x}_n+x+\o_{\ch_{m}^{x}}\big)\Big\} \q\mbox{for all} ~m\ge 1.
\eeaa
\end{itemize}

\no The following lemma plays an essential role in our arguments.

\begin{lem}
\label{lem:frozen construct}
Let Assumption \ref{assump: generator}
hold, and assume that $\left|\xi\right|\leq C_{0}$.
Let $\o\in\cQ$, $|x_{i}|=\epsilon$ for all $i\geq1$, $\pi_{n}$ be defined as in \eqref{pi_n}, and $\o\bar\otimes\pi_{n}^{x}\in\mathcal{Q}$. Then
 
\no {\rm (i)}\q there exist continuous functions $(\pi_{n},x)\mapsto\theta_{n}^{\o,\e}(\pi_{n},x)$, bounded uniformly in $(\epsilon,n)$, such that
\[
\theta_{n}^{\omega,\e}(\pi_{n};\cdot)\text{ is a C-L viscosity solution of }(E)_{\epsilon}^{\omega\bar{\otimes}\pi_{n}},
\]
with boundary conditions:
\[
\begin{cases}
\theta_{n}^{\omega,\e}(\pi_{n};x)=\xi(\omega\bar{\otimes}\pi_{n}^{x}), & |x|<\epsilon \text{ and }x\in\partial Q^{\omega\bar{\otimes}\pi_{n}},\\
\theta_{n}^{\omega,\e}(\pi_{n};x)=\theta_{n+1}^{\omega,\e}(\pi_{n}^{x};0), & |x|=\epsilon \text{ and }x\in Q^{\omega\bar{\otimes}\pi_{n}};
\end{cases}
\]

\no {\rm (ii)}\q moreover, there is a modulus of
continuity $\rho$ and a constant $C_\e>0$ such that for any $\omega^{1},\omega^{2}\in\mathcal{Q}$
\begin{equation}\label{eq:theta uni}
\left|\theta_{0}^{\omega^{1},\e}(0;0)-\theta_{0}^{\omega^{2},\e}(0;0)\right|
~\leq~ \e+ \rho(2\e)+C_\e\rho \big(d^{e}(\omega^{1},\omega^{2})\big)  .
\end{equation}
\end{lem}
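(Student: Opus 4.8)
\textbf{Proof proposal for Lemma (on the construction of $\theta_n^{\omega,\e}$).}

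The plan is to construct the family $\theta_n^{\omega,\e}(\pi_n;\cdot)$ recursively in a decreasing induction on the ``depth'' $n$, using the explicit representation of the Dirichlet problem of the path-frozen HJB equations provided in Lemma \ref{lem: s.t. uc}. Fix $\o\in\cQ$ and $\e>0$. For $\pi_n$ with $|x_i|=\e$ I would define $\theta_n^{\o,\e}(\pi_n;\cdot)$ as the viscosity solution of the frozen equation $(E)^{\o\bar\otimes\pi_n}_\e$ on $O_\e(\o\bar\otimes\pi_n)$ with the prescribed boundary data on $\partial O_\e(\o\bar\otimes\pi_n)$: the datum equals $\xi$ on the portion $\{|x|<\e\}\cap\partial Q^{\o\bar\otimes\pi_n}$ (here $\o\bar\otimes\pi_n^x$ has actually exited $Q$), and equals $\theta_{n+1}^{\o,\e}(\pi_n^x;0)$ on the sphere $\{|x|=\e\}$. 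The subtlety is that this is a coupled (infinite) system: $\theta_n$ on the sphere is defined by $\theta_{n+1}$ at its center. To break the loop I would first use Proposition \ref{prop: c1 stopping}: the hitting times $\ch_i$ satisfy $\ch_n\to\ch_Q$ quasi-surely and $\sup_x\cC^L[\ch_n<\ch^x_Q]\to 0$, so after finitely many steps (quasi-surely) one has exited $Q$ and the boundary datum reduces to $\xi$. Concretely, I would first define $\theta_n^{\o,\e}$ for the ``truncated'' problem where after $N$ steps we force the value to be $0$ (or any bounded value), obtaining $\theta_n^{\o,\e,N}$, then let $N\to\infty$. The uniform-in-$(\e,n,N)$ boundedness follows from Assumption \ref{assump: generator}(i),(ii): since $\underline g\le G\le\overline g$ and $|\xi|\le C_0$, each $\theta$ is sandwiched between the explicit sub/supersolutions $\underline w,\overline w$ of the HJB equations from \eqref{eq:over g}, which are bounded by a constant depending only on $C_0,L_0$ and $\mathrm{diam}(Q)$ via $\overline\cE^{L_0}[\ch_Q]<\infty$.

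For the continuity in $(\pi_n,x)\mapsto\theta_n^{\o,\e}(\pi_n;x)$ asserted in (i), I would proceed by a downward induction combined with the stability/comparison of viscosity solutions of the frozen PDEs. At each level the map $(\pi_n,x)\mapsto$ (frozen PDE coefficient $G(\o\bar\otimes\pi_n,\cdot)$, boundary datum) is continuous: $G$ is uniformly continuous in $\o$ with respect to $d^e$ by Assumption \ref{assump: generator}(iii), and $d^e(\o\bar\otimes\pi_n^1,\o\bar\otimes\pi_n^2)$ is controlled by $|\pi_n^1-\pi_n^2|$ in view of the linear-interpolation structure (this is exactly the point illustrated in Example \ref{eg:timeinv-func}). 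Together with the continuous dependence of the (unique, by Lemma \ref{lem: s.t. uc}) viscosity solution on data and coefficients --- which is where the crucial Assumption \ref{assmp: approx PDE} guaranteeing $\overline v=\underline v$ enters, giving uniqueness and hence stability --- one propagates continuity from level $n+1$ to level $n$, and the limit $N\to\infty$ preserves it by the uniform contraction coming from $\lambda(\cdot)>0$ in Assumption \ref{assump: generator}(iv): the difference of two such families satisfies a strict comparison whose ``discount'' is governed by $\lambda$, so the recursion is a genuine contraction and the finite-$N$ approximations converge uniformly.

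For part (ii), the uniform modulus $\rho_\e$, the idea is that $\theta_0^{\o,\e}(0;0)$ is obtained from $\o$ through exactly the same finite recursion, and the dependence on $\o$ enters only through the frozen coefficients $G(\o\bar\otimes\pi_j^x,\cdot)$ and the boundary data $\xi(\o\bar\otimes\pi_n^x)$ on the exit portions. Since $d^e(\o^1\bar\otimes\pi,\o^2\bar\otimes\pi)\le d^e(\o^1,\o^2)$, both $G$ and $\xi$ depend on $\o^1$ versus $\o^2$ through a modulus evaluated at $d^e(\o^1,\o^2)$ (using Assumption \ref{assump: generator}(iii) and $\xi\in\mathrm{BUC}$). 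Propagating this perturbation through the frozen-PDE comparison at each level, and summing the (geometrically decaying, thanks to $\lambda$) contributions over levels with the help of the tail estimate $\sup_x\cC^L[\ch_n<\ch_Q^x]\to 0$ from Proposition \ref{prop: c1 stopping}, yields $|\theta_0^{\o^1,\e}(0;0)-\theta_0^{\o^2,\e}(0;0)|\le\rho_\e(d^e(\o^1,\o^2))$ for a modulus $\rho_\e$ that may depend on $\e$ (through the number of effective levels and the size of $O_\e$) but not on $\o^1,\o^2$.

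\textbf{Main obstacle.} The heart of the difficulty is handling the coupled, a priori infinite recursion defining $\{\theta_n\}_n$ and showing it is well-posed and produces continuous objects. This is where one must simultaneously invoke Assumption \ref{assmp: approx PDE} (to get existence/uniqueness and hence stability of the frozen-PDE Dirichlet problems), Assumption \ref{assump: generator}(iv) with the strictly increasing $\lambda$ (to turn the level-by-level recursion into a contraction, so that the finite-$N$ truncations converge and the limit is independent of the truncation value), and Proposition \ref{prop: c1 stopping} (to ensure the recursion effectively terminates after finitely many steps quasi-surely, so that boundary data genuinely reduce to $\xi$ and tail sums converge). Establishing the contraction estimate uniformly in $\e$ and in the depth $n$ --- i.e. quantifying how a perturbation of the boundary datum on the $\e$-sphere at level $n+1$ affects $\theta_n$ at the center, via comparison for $(E)^{\o\bar\otimes\pi_n}_\e$ and the uniform ellipticity constant $L_0$ --- is the technically delicate point; once that estimate is in hand, assertions (i) and (ii) follow by the inductive propagation sketched above.
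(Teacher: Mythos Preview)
Your overall framework---truncate the recursion at depth $N$, use Proposition \ref{prop: c1 stopping} for the tail, and pass to the limit---is the right skeleton, and matches the paper's strategy. But two points in your proposal are genuine gaps, and one of them is exactly the technical heart of the lemma.

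First, your convergence mechanism is misidentified. You repeatedly assert that the strict monotonicity $\lambda$ from Assumption \ref{assump: generator}(iv) makes the level-by-level recursion a \emph{contraction}, and that the contributions in part (ii) are ``geometrically decaying, thanks to $\lambda$''. This is not justified: on the small ball $O_\e$, the effective discount over one step is of order $\lambda\cdot\overline\cE^{L_0}[\ch_{O_\e}]$, which vanishes as $\e\to 0$, so there is no uniform contraction. The paper does not use $\lambda$ at all in this lemma. Instead, it builds explicit sub- and supersolutions $\underline\theta_n^{\o,m}$ and $\overline\theta_n^{\o,m}$ by first solving the HJB equations for $\underline g$ and $\overline g$ (where the stochastic-control representation of Lemma \ref{lem: s.t. uc} gives everything explicitly), and then squeezes: by Lemma \ref{lem: comp on ball} plus the tower property, $|\overline\theta_n^{\o,m}-\underline\theta_n^{\o,m}|\le C\,\cC^{L_0}[\ch_{m-n}<\ch_Q]\to 0$, so $\overline\theta_n^{\o,m}\downarrow\theta_n^\o$ and $\underline\theta_n^{\o,m}\uparrow\theta_n^\o$ monotonically. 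This sandwich is what replaces your contraction, and it is essential for part (ii): since the $\o$-perturbation error accumulates \emph{linearly} in the number of levels (the paper's estimate has a factor $C(m-n)\rho(d^e(\o^1,\o^2))$), one cannot sum an infinite series. The squeeze lets you bound $(\theta_n^{\o^1}-\theta_n^{\o^2})\le(\overline\theta_n^{\o^1,m}-\underline\theta_n^{\o^2,m})$ for \emph{each fixed} $m$, then optimize over $m$ to get a modulus. Without the squeeze, your ``summing the contributions'' step diverges.

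Second, for part (ii) you do not address the fact that the domains $O_\e(\o^1\bar\otimes\pi_n)=O_\e\cap Q^{\o^1\bar\otimes\pi_n}$ and $O_\e(\o^2\bar\otimes\pi_n)$ are \emph{different} sets (since $Q^{\o^1}$ and $Q^{\o^2}$ are different translates of $Q$). Standard continuous-dependence-on-coefficients for viscosity solutions assumes a common domain, so ``propagating the perturbation through frozen-PDE comparison'' does not apply directly. The paper handles this by working on the intersection $O_\e\cap Q^{\o^1}\cap Q^{\o^2}$, decomposing its boundary into three pieces (on $\partial O_\e$, on $\partial Q^{\o^1}$, on $\partial Q^{\o^2}$), and treating each piece separately; the cross terms are controlled using the explicit HJB bounds from the $\overline g,\underline g$ case. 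This boundary decomposition is the most delicate step and is entirely absent from your sketch.

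A minor point: you invoke Assumption \ref{assmp: approx PDE}, but the lemma only assumes Assumption \ref{assump: generator}. Existence and uniqueness for the frozen Dirichlet problems come from standard viscosity theory for uniformly elliptic PDEs on bounded Euclidean domains, not from Assumption \ref{assmp: approx PDE} (which is used later, in Lemma \ref{lem: construct psi}, to produce smooth approximants).
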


\begin{rem}
For the domain $O_\e(\o)$ defined in \eqref{defn:Oepsilon}, a part of its boundary belongs to $\pa Q^{\o}$, while the rest belongs to $\pa O_{\epsilon}$. On $\partial Q^{\o}\cap \pa O_\e(\o)$, we should set the solution to be equal to the boundary condition of
the PPDE. Otherwise, on $\partial O_{\epsilon} \cap \pa O_\e(\o)$, the
value of the solution should be consistent with that of the next piece
of the path-frozen PDE's. The proof of Lemma \ref{lem:frozen construct} is similar to that of Lemma 6.2 in \cite{ETZ3}. However, the stochastic representations and the estimates that we will use are all in the context of the elliptic equations. So it is necessary to present the proof in detail.
\end{rem}

\no In preparation of the proof of Lemma \ref{lem:frozen construct}, we give the following estimate on the C-L viscosity solutions to the path-frozen PDE's. The proof is reported in Appendix.
\ms


\begin{lem}\label{lem: comp on ball}
Fix $D\in\mathcal{R}$. Let $h^{i}:\ \partial D\rightarrow\mathbb{R}$
be continuous ($i=1,2$), $G$ satisfy Assumption \ref{assump: generator},
and $v^{i}$ be the C-L viscosity solutions to the following PDE's:
\[
G(\o^i,v^i,\pa_x v^i,\pa^{2}_{xx} v^i)=0\ \text{on}\ D,\ v^{i}=h^{i}\ \text{on}\ \partial D.
\]
Then we have
\[
(v^1-v^2)(x) ~\leq~ \ol\cE^{L_{0}}\left[\left(h^1-h^2\right)^{+}(x+B_{\ch^x_{D}})\right]+C\rho \Big(d^e(\o^1,\o^2)\Big),
\]
where $\rho$ is a modulus of continuity in $\o$ of the function $G$.
In particular, if $\o^1=\o^2$, then we have 
$$(v^1-v^2)(x)~\leq~ \overline{\mathcal{E}}^{L_{0}}\left[\left(h^1-h^2\right)^{+}(x+B_{\ch^x_{D}})\right].$$
\end{lem}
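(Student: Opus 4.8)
The plan is to deduce the general bound from its special case $\omega^1=\omega^2$ by means of an elementary barrier, and to treat that special case by identifying the nonlinear expectation $\overline{\mathcal{E}}^{L_0}[\,\cdot\,]$ with the solution of a Pucci extremal Dirichlet problem and invoking the classical comparison principle for the single nonlinearity $G(\omega^1,\cdot)$ (recall that the $v^i$ are continuous up to $\partial D$ with $v^i=h^i$ there, since $D\in\mathcal{R}$ is bounded and $\overline{\mathcal{E}}^{L_0}[\ch_D]<\infty$ by Proposition \ref{prop: c1 stopping}).

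\emph{Step 1: reduction to $\omega^1=\omega^2$.} Put $c:=\|G(\omega^1,\cdot)-G(\omega^2,\cdot)\|_\infty$. Since $D$ is bounded, fix a direction $e_1$, set $M:=\sup_{x\in\mathrm{cl}(D)}e_1\cdot x$, and take $\kappa=\kappa(L_0)$ large and $\mu=\mu(L_0,D)\,c$ so that $\chi(x):=\mu\big(e^{\kappa M}-e^{\kappa(e_1\cdot x)}\big)$ is smooth, concave, non-negative on $\mathrm{cl}(D)$, satisfies $\|\chi\|_\infty\le C(L_0,D)\,c$, and $-\frac1{L_0}\Delta\chi-L_0|D\chi|\ge c$ on $D$. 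A direct computation on test functions, using Assumption \ref{assump: generator}(ii)--(iv) and $\|G(\omega^1,\cdot)-G(\omega^2,\cdot)\|_\infty\le c$, shows that $v^2+\chi$ is a viscosity supersolution of $-G(\omega^1,u,Du,D^2u)=0$ on $D$. Granting Step 2 for the subsolution $v^1$ and the supersolution $v^2+\chi$ of this equation, and using $\chi\ge 0$ together with the monotonicity of $t\mapsto t^+$ and of $\overline{\mathcal{E}}^{L_0}$, one obtains
\[
(v^1-v^2)(x)\le \chi(x)+\overline{\mathcal{E}}^{L_0}\!\Big[\big((h^1-h^2)^+_{\ch_D}\big)^x\Big]\le C(L_0,D)\,c+\overline{\mathcal{E}}^{L_0}\!\Big[\big((h^1-h^2)^+_{\ch_D}\big)^x\Big].
\]

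\emph{Step 2: the case $\omega^1=\omega^2$.} Write $G$ for the common nonlinearity, $g:=(h^1-h^2)^+\in C(\partial D)$, and $\phi(x):=\overline{\mathcal{E}}^{L_0}[(g_{\ch_D})^x]=\sup_{\mathbb{P}\in\mathcal{P}^{L_0}}\mathbb{E}^{\mathbb{P}}[g(x+B_{\ch^x_D})]\ge 0$. By the argument of Lemma \ref{lem: s.t. uc} with $b\equiv0$ and $C_0=0$ --- which, thanks to the convexity of $D$ and the exit-time estimate of Proposition \ref{prop: estimate hitting}, also gives $\phi\in\mathrm{BUC}(\mathrm{cl}(D))$ --- the function $\phi$ is the viscosity solution on $D$ of $-\sup_{|\alpha|\le L_0,\ \beta\in[\sqrt{2/L_0}I_d,\sqrt{2L_0}I_d]}\big(\alpha\cdot D\phi+\frac12\beta^2:D^2\phi\big)=0$ with $\phi=g$ on $\partial D$; since $\sup_{|\alpha|\le L_0}\alpha\cdot z=L_0|z|$ and $\sup_\beta\frac12\beta^2:\gamma=\cM^+_{1/L_0,L_0}(\gamma)$, this is the extremal equation $-\cM^+_{1/L_0,L_0}(D^2\phi)-L_0|D\phi|=0$. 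I then claim that $v^2+\phi$ is a viscosity supersolution of $-G(\cdot,u,Du,D^2u)=0$ on $D$. If $\psi\in C^2$ and $v^2+\phi-\psi$ has a local minimum at $x_0\in D$, apply the Crandall--Ishii lemma (theorem on sums) to $v^2(x)+\phi(y)-\psi(x)+\frac1{2\varepsilon}|x-y|^2$ on $\mathrm{cl}(D)^2$: it yields $x_\varepsilon,y_\varepsilon\to x_0$, $s_\varepsilon:=\frac1\varepsilon(x_\varepsilon-y_\varepsilon)$, and matrices $(D\psi(x_\varepsilon)-s_\varepsilon,X_\varepsilon)\in\overline{J}^{2,-}v^2(x_\varepsilon)$, $(s_\varepsilon,Y_\varepsilon)\in\overline{J}^{2,-}\phi(y_\varepsilon)$ with $X_\varepsilon+Y_\varepsilon\ge D^2\psi(x_\varepsilon)-o(1)$. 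The supersolution properties give $G(v^2(x_\varepsilon),D\psi(x_\varepsilon)-s_\varepsilon,X_\varepsilon)\le 0$ and $\cM^+_{1/L_0,L_0}(Y_\varepsilon)+L_0|s_\varepsilon|\le 0$; combining these with $\phi(y_\varepsilon)\ge 0$ and the monotonicity of $G$ in $y$, the $L_0$-Lipschitz bound of $G$ in $z$, the pointwise inequality $G(\cdot,\cdot,\gamma_1)-G(\cdot,\cdot,\gamma_2)\le\cM^+_{1/L_0,L_0}(\gamma_1-\gamma_2)$ (a consequence of (ii)--(iii)), and the monotonicity of $G$ in $\gamma$ together with the Hessian bound, we get
\[
G\big(v^2(x_\varepsilon)+\phi(y_\varepsilon),\,D\psi(x_\varepsilon),\,D^2\psi(x_\varepsilon)\big)\le G\big(v^2(x_\varepsilon),D\psi(x_\varepsilon)-s_\varepsilon,X_\varepsilon\big)+L_0|s_\varepsilon|+\cM^+_{1/L_0,L_0}(Y_\varepsilon)+o(1)\le o(1),
\]
and letting $\varepsilon\to0$ proves the claim. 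Since $v^1$ is a viscosity subsolution of $-G(\cdot,u,Du,D^2u)=0$ with $v^1=h^1\le h^2+g=v^2+\phi$ on $\partial D$, the classical comparison principle for this proper uniformly elliptic equation (see e.g.\ \cite{CL}) gives $v^1\le v^2+\phi$ on $\mathrm{cl}(D)$, i.e. $(v^1-v^2)(x)\le\overline{\mathcal{E}}^{L_0}[((h^1-h^2)^+_{\ch_D})^x]$; this is the second assertion, and with Step 1 it yields the first.

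\emph{Main obstacle.} The delicate point is the claim in Step 2 that $v^2+\phi$ is a viscosity supersolution: $\phi$ solves the extremal equation only in the viscosity sense, so it cannot simply be added to $v^2$, and one must route through the theorem on sums while absorbing the first-order term via $\cM^+_{1/L_0,L_0}(Y_\varepsilon)+L_0|s_\varepsilon|\le 0$ and controlling the second-order slot with the Hessian bound. The accompanying technical requirement is $\phi\in\mathrm{BUC}(\mathrm{cl}(D))$, needed so that the PDE comparison principle applies to $v^1$ and $v^2+\phi$; exactly as in Lemma \ref{lem: s.t. uc}, this rests on the convexity of $D$ and on the $\overline{\mathcal{E}}^{L_0}$-modulus of continuity of $x\mapsto\ch^x_D$ furnished by Proposition \ref{prop: estimate hitting}.
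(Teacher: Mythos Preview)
Your argument is correct, but it takes a genuinely different route from the paper's. The paper does not separate the case $\omega^1=\omega^2$; instead it absorbs the constant $c_0:=\|G(\omega^1,\cdot)-G(\omega^2,\cdot)\|_\infty$ directly into the comparison function by setting
\[
w(x):=\overline{\mathcal{E}}^{L_0}\Big[\big((\delta h)^+_{\ch_D}\big)^x+\int_0^{\ch_D^x}c_0\,dt\Big],
\]
which is a viscosity solution of $-c_0-L_0|Dw|-\tfrac12\sup_\sigma\sigma^2{:}D^2w=0$. It then mollifies: since this extremal operator is convex, Krylov's shaking-of-coefficients argument makes $w^\eta:=w*K^\eta$ a \emph{classical} supersolution, so adding $w^\eta$ (plus a small constant) to $v^2$ lets one verify the supersolution property of the sum by the elementary test-function computation $\phi\mapsto\phi-\bar w^\eta$, and one sends $\eta\to0$ at the end. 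Your proof avoids mollification entirely and instead pushes the addition $v^2+\phi$ through the Crandall--Ishii theorem on sums, extracting sub-jets $(D\psi-s_\varepsilon,X_\varepsilon)\in\overline{J}^{2,-}v^2$ and $(s_\varepsilon,Y_\varepsilon)\in\overline{J}^{2,-}\phi$ with the good Hessian bound $X_\varepsilon+Y_\varepsilon\ge D^2\psi-o(1)$; the Pucci inequality $G(\cdot,\gamma_1)-G(\cdot,\gamma_2)\le\mathcal{M}^+_{1/L_0,L_0}(\gamma_1-\gamma_2)$ then converts the extremal supersolution property of $\phi$ into exactly the slack needed. What the paper's route buys is that once $w^\eta$ is smooth the verification is a one-line computation, at the cost of invoking Krylov's convexity trick; what your route buys is that it works without any regularization and would extend to settings where the extremal equation is not convex, at the cost of the heavier Ishii-lemma machinery and the extra barrier reduction in Step~1.
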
 

\vspace{3mm}
\no {\bf Proof of Lemma \ref{lem:frozen construct}}\quad Since $\e$ is fixed, to simplify the notation, we omit $\e$ in the superscript in the proof. We devide the proof in five steps.

\vspace{2mm}
\no \underline{\it Step 1.}\quad We first prove (i) in the case of $G:=\overline{g}$, where $\overline{g}$ is defined in (\ref{eq:over g}).
For any $N$, denote
$$
\ol\th^\o_{N,N}(\pi_N;0):=\ol{\cE}^{L_0}\left[(\xi_{\ch_Q})^{\o\bar\otimes\pi_N}\right].
$$
We define $\ol{\th}_{N,n}^{\o}(\pi_{n};\cdot)$ as the C-L viscosity solution of the following PDE
\be\label{eq:PDE over g}
-\ol{g}(\th,\pa_x \th,\pa_{xx}^{2}\th)=0  ~\mbox{on}~O_\e(\o\bar\otimes\pi_n),
~~
\th(x)=\ol\th_{N,n+1}^\o(\pi_{n}^{x};0)  ~\mbox{on}~\partial O_\e(\o\bar\otimes\pi_n),~~ \mbox{for all}~n\leq N-1.
\ee
In order to shorten the formulas below, we denote the path
\beaa
&\Pi_N (\o,\pi_n^x, B) ~:=~ \o\bar\otimes\pi_{n}^{N^\o-n}(x,B)\bar\otimes (B_{\ch_{Q^{\o\bar\otimes\pi_{n}^{x}}}\wedge\cdot})^{\ch_{N^\o-n}^x},&\\
&\mbox{with}\q N^\o~:=~ \max\{n\le i\le N: \ch_{i-n}^{x} < \ch_{Q^{\o\bar\otimes\pi_{n}^{x}}} \}.&
\eeaa
By Lemma \ref{lem: s.t. uc} and simple induction, we have the stochastic representation of $\ol{\th}_{N,n}^{\o}(\pi_{n};\cdot)$:
\beaa
\ol{\th}_{N,n}^\o(\pi_{n};x)
=\sup_{b\in\dbH^{0}\left(\left[0,L_0\right]\right)}\ol\cE^{L_0}
\Big[e^{-\int_{0}^{ \ch_{N-n}^{\o,\pi_n,x}}b_{r}dr}
\xi\Big(\Pi_N(\o,\pi_n^x, B)\Big)
+C_{0}\int_{0}^{\ch_{N-n}^{\o,\pi_{n}.x}}e^{-\int_{0}^{s}b_{r}dr}ds\Big],~~\mbox{for}~n\le N-1.
\eeaa
Lemma \ref{lem: s.t. uc} also implies that 
\be\label{theta-continue}
\overline{\theta}_{N,n}^{\epsilon}(\pi_{n};x)~\mbox{is continuous
in both variables}~(\pi_{n},x),
\ee
and clearly, they are uniformly bounded. We next define
\beaa
\ol{\th}_n^\o(\pi_{n};x)
:=
\sup_{b\in\mathbb{H}^{0}\left(\left[0,L_{0}\right]\right)}\ol{\cE}^{L_0}\Big[e^{-\int_0^{\ch_{Q^{\o\bar\otimes\pi_{n}^{x}}}}b_{r}dr}\limsup_{N\rightarrow\infty}
\xi\Big(\Pi_N(\o,\pi_n^x, B)\Big)
+C_{0}\int_{0}^{\ch_{Q^{\o\bar\otimes\pi_{n}^{x}}}}e^{-\int_{0}^{s}b_{r}dr}ds\Big].
\eeaa
Then it follows that
\begin{eqnarray*}
|\ol{\th}_n^\o(\pi_{n};x)-\ol{\th}_{N,n}^{\o}(\pi_{n};x)| 
& \leq & 
C\mathcal{C}^{L_{0}}\left[\ch_{N-n}^{x}<\ch_{Q^{\o\bar\otimes\pi_{n}^{x}}}\right]\rightarrow0,\ N\rightarrow\infty.
\end{eqnarray*}
By Proposition \ref{prop: c1 stopping}, the convergence
is uniform in $(\pi_{n},x)$. Together with \eqref{theta-continue}, it implies that $\overline{\theta}_{n}^{\o}(\pi_{n};x)$
is uniformly bounded and continuous in $(\pi_{n},x)$. Moreover, by
the stability of C-L viscosity solutions we see that $\overline{\theta}_{n}^{\o}(\pi_{n};\cdot)$
is the C-L viscosity solution of PDE (\ref{eq:PDE over g}) in $O_\e(\o\bar\otimes\pi_n)$,
with the boundary condition:
\beaa
\begin{cases}
\bar{\theta}_{n}^{\o}(\pi_{n};x)=\xi(\o\bar\otimes\pi_{n}^{x}), & |x|<\epsilon\text{ and }x\in\partial Q^{\o\bar\otimes\pi_{n}},\\
\bar{\theta}_{n}^{\o}(\pi_{n};x)=\bar{\theta}_{n+1}^{\o}(\pi_{n}^{x};0), & |x|=\epsilon\text{ and }x\in Q^{\o\bar\otimes\pi_{n}}.
\end{cases}
\eeaa
Hence, we have showed the desired result in the case $G=\overline{g}$. Similarly, we may show that $\underline{\theta}_{n}^{\o}$ defined below is the C-L viscosity solution to the path-frozen PDE when
the nonlinearity is $\underline{g}$:
\beaa
\ul{\theta}^\o_n(\pi_{n};x)
:=
\inf_{b\in\mathbb{H}^{0}\left(\left[0,L_{0}\right]\right)}\underline{\mathcal{E}}^{L_{0}}\Big[e^{-\int_{0}^{\ch_{Q^{\o\bar\otimes\pi_{n}^{x}}}}b_{r}dr}\limsup_{N\rightarrow\infty}
\xi\Big(\Pi_N(\o,\pi_n^x, B)\Big)
+C_{0}\int_{0}^{\ch_{Q^{\o\bar\otimes\pi_{n}^{x}}}}e^{-\int_{0}^{s}b_{r}dr}ds\Big].
\eeaa

\vspace{3mm}

\no \underline{\it Step 2.}\q We next prove (ii) in the case of $G=\ol g$. Considering $\pi_{n}^{x}\in\mathcal{Q}^{\omega^{1}}\cap\mathcal{Q}^{\omega^{2}}$, we have the following estimate:
\begin{multline*}
\Big|\overline{\theta}_{N,n}^{\omega^{1}}(\pi_{n};x)-\overline{\theta}_{N,n}^{\omega^{2}}(\pi_{n};x)\Big| 
\leq  C\overline{\mathcal{E}}^{L_{0}}\left[\left|\ch_{N-n}^{\omega^{1},\pi_{n},x}-\ch_{N-n}^{\omega^{2},\pi_{n},x}\right|\right]\\
+C\overline{\mathcal{E}}^{L_{0}}\biggl[ \Big|
\xi\Big( \Pi_N(\o^1,\pi_n^x, B) \Big) -\xi\Big( \Pi_N(\o^2,\pi_n^x, B) \Big) \Big| \biggr].
\end{multline*}
We observe that
\beaa
&\left|\ch_{N-n}^{\omega^{1},\pi_{n},x}-\ch_{N-n}^{\omega^{2},\pi_{n},x}\right| 
~\leq ~ \left|\ch_{Q^{\omega^{1}\bar\otimes\pi_{n}^{x}}}-\ch_{Q^{\omega^{2}\bar\otimes\pi_{n}^{x}}}\right|,&\\
& d^e\Big( \Pi_N(\o^1,\pi_n^x, B),\Pi_N(\o^2,\pi_n^x, B)\Big) 
~ \le ~ d^e(\o^1,\o^2) + \Big\|B_{\ch_{Q^{\o^1\bar\otimes\pi_{n}^{x}}}\we\cd} - B_{\ch_{Q^{\o^2\bar\otimes\pi_{n}^{x}}}\we\cd}\Big\|_\infty +2\e.&
\eeaa
As in Lemma \ref{lem: s.t. uc}, one may show that
\beaa
\Big|\overline{\theta}_{N,n}^{\omega^{1}}-\overline{\theta}_{N,n}^{\omega^{2}}\Big|
& \leq & \rho \Big(d^e(\omega^{1},\omega^{2})+2\e\Big)~\le ~ \rho \big(d^{e}(\omega^{1},\omega^{2})\big) + \rho(2\e),
\eeaa
in particular, $\rho$ is independent of $N$ and $\e$.  By sending $N\rightarrow\infty$, we obtain that
\beaa
\Big|\overline{\theta}_{n}^{\omega^{1}}-\overline{\theta}_{n}^{\omega^{2}}\Big| &\leq & \rho \big(d^{e}(\omega^{1},\omega^{2})\big) + \rho(2\e).
\eeaa
A similar argument provides the same estimate for $\underline{\theta}_{n}^{\omega}$:
\be\label{eq:under theta}
\Big|\underline{\theta}_{n}^{\omega^{1}}-\underline{\theta}_{n}^{\omega^{2}}\Big| ~\leq ~ \rho \big(d^{e}(\omega^{1},\omega^{2})\big) + \rho(2\e).
\ee

\vspace{3mm}
\no \underline{\it Step 3.}\quad We now prove (i) for general $G$. Given the construction
of Step 1, we define:
\beaa
\ol{\th}_{m}^{\o,m}(\pi_{m};x) ~:=~ \ol{\th}_{m}^{\o}(\pi_{m};x),\q  \underline{\theta}_{m}^{\o,m}(\pi_{m};x)~:=~ \underline{\theta}_{m}^{\o}(\pi_{m};x),\q  m\geq1.
\eeaa
For $n\leq m-1$, we define $\overline{\theta}_{n}^{\o,m}$
and $\underline{\theta}_{n}^{\o,m}$ as the unique C-L viscosity
solution of the path-frozen PDE $(E)_{\epsilon}^{\o\bar\otimes\pi_{n}}$ with the boundary conditions
$$
\ol{\th}_n^{\o,m}(\pi_{n};x)=\ol{\th}_{n+1}^{\o,m}(\pi_{n}^{x};0),
~~\ul{\th}_n^{\o,m}(\pi_{n};x)=\ul{\th}_{n+1}^{\o,m}(\pi_{n}^{x};0)~~\mbox{for}~x\in\partial O_\e(\o\bar\otimes\pi_n).
$$
Since $\ul g\leq G\leq \ol g$, it is obvious that $\ol\th^{\e,m}_m$ and $\ul\th^{\e,m}_m$ are respectively C-L viscosity supersolution and subsolution to the path-frozen PDE $(E)_{\epsilon}^{\o\bar\otimes\pi_m}$. By the comparison result for C-L viscosity solutions of PDE's, we obtain that
\beaa
\overline{\theta}_{m}^{\o,m}(\pi_{m};\cdot)
\geq
\overline{\theta}_{m}^{\o,m+1}(\pi_{m};\cdot)
\geq
\underline{\theta}_{m}^{\o,m+1}(\pi_{m};\cdot)\geq\underline{\theta}_{m}^{\o,m}(\pi_{m};\cdot)\q\mbox{on}~O_\e(\o\bar\otimes\pi_m),
\eeaa
Further, it follows from the comparison again that
\begin{equation}\label{eq:comparison theta}
\ol{\th}_n^{\o,m}(\pi_n;\cdot)
\geq
\ol{\th}_n^{\o,m+1}(\pi_n;\cdot)
\geq
\ul{\th}_n^{\o,m+1}(\pi_n;\cdot)
\geq\ul{\th}_n^{\o,m}(\pi_n;\cdot)\q \mbox{on}~O_\e(\o\bar\otimes\pi_n)~\mbox{for all}~n\leq m.
\end{equation}
Denote $\d\th_n^{\o,m}:=\ol\th_n^{\o,m}-\ul\th_n^{\o,m}$.
Applying Lemma \ref{lem: comp on ball} repeatedly and using the
tower property of $\overline{\mathcal{E}}^{L_0}$ stated in Lemma \ref{lem: tower property}, we obtain that
\beaa
|\d\th_n^{\o,m}(\pi_n;x)|\leq\overline{\cE}^{L_0}\Big[\left|\delta\theta_{m}^{\o,m}\left(\pi_{n}^{m-n}(x,B);0\right)\right| 1_{\{\ch^x_{m-n} < \ch_{Q^{\o\bar\otimes \pi^x_n}} \}}\Big]
\eeaa
(we also used the fact that $\d\th_m^{\o,m}(\o' ; 0)=0$ as $\o' \in\partial \cQ^{\o}$).
Then, by Proposition \ref{prop: c1 stopping}, we have
$$
|\delta\theta_{n}^{\o,m}(\pi_{n};x)|
\leq 
C\mathcal{C}^{L_{0}}\left[\ch_{m-n}^{x}<\ch_{Q^{\o\bar\otimes\pi^x_n}}\right]\rightarrow 0,\q\mbox{as}~m\rightarrow\infty.
$$
Together with (\ref{eq:comparison theta}), this implies the existence of $\th_n^\o$ such that
\be\label{eq: limit theta}
\overline{\theta}_{n}^{\o,m}\downarrow\theta_{n}^{\o},\q \underline{\theta}_{n}^{\o,m}\uparrow\theta_{n}^{\o},\q \mbox{as}\ m\rightarrow\infty.
\ee
Clearly $\theta_{n}^\o$ is uniformly bounded and continuous (because it is both lower and upper semicontinuous).
Finally, it follows from the stability of C-L viscosity solutions that $\theta_{n}^\o$ satisfies the statement of (i).

\vspace{3mm}
\no \underline{\it Step 4.}\q We next prove (ii) for a general nonlinearity $G$.  For the simplicity of notation, we denote the stopping times:
\beaa
\ch^{i}:=\ch_{Q^{\omega^{i}\bar\otimes\pi_{n}^{x}}} \q \mbox{for}\q i=1,2,\q \ch^{1,2}:=\ch^{1}\wedge \ch^{2}.
\eeaa
First, considering $\overline{\theta}_{n}^{\omega,m}$ defined in Step 3, we claim that for $\pi_{n}^{x}\in\mathcal{Q}^{\omega^{1}}\cap\mathcal{Q}^{\omega^{2}}$
\begin{multline}\label{eq: crucial}
(\overline{\theta}_{n}^{\omega^{1},m}-\underline{\theta}_{n}^{\omega^{2},m})(\pi_{n};x) 
 \leq 
  \overline{\mathcal{E}}^{L_{0}}\Big[(\ol\th_{m}^{\o^{1}}-\ul\th_{m}^{\o^{2}})\Big(\pi_{n}^{m-n}(x,B);0\Big)1_{\left\{ \ch^x_{m-n}\leq \ch^{1,2}\right\} }  \\
  + \big(\rho \big(d^e(\o^1,\o^2)\big)+\rho(2\e)\big)1_{\left\{ \ch^x_{m-n} > \ch^{1,2}\right\} }  \Big]+ C(m-n)\rho\big(d^{e}(\omega^{1},\omega^{2})\big),
\end{multline}
This claim will be proved in Step 5. 
Since $\ol\th_{m}^{\o^{1}}, \ul\th_{m}^{\o^{2}}$ are both bounded, it follows from (\ref{eq: crucial}) that
\beaa
(\overline{\theta}_{n}^{\omega^{1},m}-\underline{\theta}_{n}^{\omega^{2},m})(\pi_{n};x)
&\le & 
C\mathcal{C}^{L}\left[\ch^x_{m-n}<\ch^{1,2}\right]+C(m-n+1)\rho \big(d^{e}(\omega^{1},\omega^{2})\big) + \rho (2\e).
\eeaa
Recalling \eqref{eq: limit theta}, we obtain that
\beaa
(\theta_{n}^{\omega^{1}}-\theta_{n}^{\omega^{2}})(\pi_{n};x)
&\leq &
 C\mathcal{C}^{L}\left[\ch^x_{m-n}<\ch^{1,2}\right]+C(m-n+1)\rho \big(d^{e}(\omega^{1},\omega^{2})\big) + \rho (2\e).
\eeaa
Since $\lim_{m\rightarrow\infty} \mathcal{C}^{L}\left[\ch^x_{m-n}<\ch^{1,2}\right] =0$, there is a constant $C_\e$ such that
\beaa
(\theta_{n}^{\omega^{1}}-\theta_{n}^{\omega^{2}})(\pi_{n};x)
&\leq &
\e+ C_\e \rho \big(d^{e}(\omega^{1},\omega^{2})\big) + \rho (2\e).
\eeaa
By exchanging the roles of $\omega^{1}$ and $\omega^{2}$, we have 
\beaa
\big|(\theta_{n}^{\omega^{1}}-\theta_{n}^{\omega^{2}})(\pi_{n};x)\big|
& \leq & \e+\rho(2\e) + C_\e \rho \big(d^{e}(\omega^{1},\omega^{2})\big).
\eeaa

\vspace{3mm}

\no \underline{\it Step 5.}\quad We now prove Claim (\ref{eq: crucial}). Suppose that $m\geq n+1$.
We first  show that
\bea\label{complex1}
(\overline{\theta}_{n}^{\omega^{1},m}-\underline{\theta}_{n}^{\omega^{2},m})(\pi_{n};x) & \leq & \overline{\mathcal{E}}^{L_{0}}\Big[ \big(\overline{\theta}_{n+1}^{\omega^{1},m}-\underline{\theta}_{n+1}^{\omega^{2},m}\big) \big(\pi_{n}^{1}(x,B);0\big)1_{\left\{ \ch^x_{1}\leq \ch^{1,2}\right\} }\\
 &  &  + \big(\rho\big(d^e(\o^1,\o^2)\big) + \rho(2\e)\big) 1_{\{\ch^x_1 > \ch^{1,2}\}}  \Big] +C\rho(d^{e}(\omega^{1},\omega^{2})).\notag
\eea
Then (\ref{eq: crucial}) follows from simple induction. Recall
that $\overline{\theta}_{n}^{\omega^{1},m}$ (resp. $\underline{\theta}_{n}^{\omega^{2},m}$)
is a solution to the PDE with generator $G(\omega^{1},\cdot)$ (resp. $G(\omega^{2},\cdot)$).
Now we study those two PDE's on the domain:
\[
O_{\epsilon}\cap Q^{\omega^{1}}\cap Q^{\omega^{2}}.
\]
The boundary of this set can be divided into three parts which belong
to $\partial O_{\epsilon}$, $\partial Q^{\omega^{1}}$ and $\partial Q^{\omega^{2}}$
respectively. We denote them by ${\rm Bd}_{1}$, ${\rm Bd}_{2}$ and ${\rm Bd}_{3}$.

(i)\quad On ${\rm Bd}_{1}$, we have $\ch^x_{1}\leq \ch^{1,2}$, and thus 
$$\ol{\th}_n^{\o^1,m}(\pi_n;x)=\ol{\th}_{n+1}^{\o^1,m}(\pi^x_n;0)~~\mbox{and}~~\ul{\th}_n^{\o^2,m}(\pi_n;x)=\ul{\th}_{n+1}^{\o^2,m}(\pi^x_n,0).$$

(ii)\quad On ${\rm Bd}_{2}$, we have $\ch^{1}<\ch^x_{1}$, so we  have $~\ol{\th}_n^{\o^1,m}(\pi_n;x)= \xi(\o^1\bar\otimes \pi^x_n) = \ul{\th}_{n}^{\o^1,n}(\pi_n;x).$

(iii)\quad On ${\rm Bd}_{3}$, we have $\ch^{2}<\ch^x_{1}$, so we have $~\ul{\th}_n^{\o^2,m}(\pi_n;x)=\xi(\o^2\bar\otimes \pi^x_n)=\ol{\th}_{n}^{\o^2,n}(\pi_n;x).$

\no Then it follows from Lemma \ref{lem: comp on ball} that
\bea\label{complex2}
(\ol{\th}_{n}^{\o^{1},m}-\ul{\th}_{n}^{\o^{2},m})(\pi_{n};x) 
& \leq & \ol{\cE}^{L_{0}}\Big[\big(\overline{\theta}_{n+1}^{\omega^{1},m} - \ul{\th}_{n+1}^{\o^{2},m}\big)\big(\pi_{n}^{1}(x,B);0\big)1_{\left\{ \ch^x_{1}\leq \ch^{1,2}\right\} }\notag \\
 &  &  + \Big(\ul{\th}_{n}^{\o^1,n}(\pi_n; x+B_{\ch^1}) - \ul{\th}_{n}^{\o^{2},m}\big(\pi_{n}; x+B_{\ch^1}\big) \Big) 1_{\{\ch^1<\ch^x_1 \le \ch^{2}\}}  \\
 & &   + \Big(\ol{\th}_{n}^{\o^{1},m}\big(\pi_{n}; x+ B_{\ch^2}\big) - \ol{\th}_{n}^{\o^2,n}(\pi_n;x+B_{\ch^2})  \Big) 1_{\{\ch^2<\ch^x_1 \le \ch^{1}\}}  \Big] +C\rho(d^{e}(\omega^{1},\omega^{2})). \notag
\eea
We next estimate
\beaa
\D &:=& \ul{\th}_{n}^{\o^1,n}(\pi_n; x+B_{\ch^1}) - \ul{\th}_{n}^{\o^{2},m}\big(\pi_{n}; x+B_{\ch^1}\big)
\eeaa
As in Step 3, the comparison result of C-L viscosity solution implies that
\beaa
\ul{\th}_{n}^{\o^{2},m}(\pi_{n};x+B_{\ch^{1}})
&\geq &
\ul{\th}_{n}^{\o^{2},n}(\pi_{n};x+B_{\ch^{1}}).
\eeaa
It follows from \eqref{eq:under theta} that
\beaa
\D 
~\le~
\ul\th_{n}^{\o^1,n}(\pi_n; x+ B_{\ch^1})-\ul\th_{n}^{\o^2,n}(\pi_n; x+B_{\ch^1})
~\leq~ 
\rho \big( d^e(\omega^1,\omega^2)\big)+\rho(2\e).
\eeaa
Similarly we can obtain the same estimate for $ \ol{\th}_{n}^{\o^{1},m}\big(\pi_{n}; x+B_{\ch^2}\big)- \ol{\th}_{n}^{\o^2,n}(\pi_n; x+B_{\ch^2}) $. Together with \eqref{complex2}, we obtain \eqref{complex1}.
\qed

\vspace{3mm}
The previous lemma shows the existence of C-L viscosity solution to
the path-frozen PDE's. Further, we will use Assumption \ref{assmp: approx PDE} to construct piecewise smooth super- and sub-solutions to the PPDE.
Recall the stopping times defined in \eqref{def:seq_stop}, and denote 
$$\th^\e_n:=\th^{{\bf 0},\e}_n,\q 
\ch_n:=\ch^0_n\we \ch_Q \q\mbox{and}\q \hat\pi_n := {\rm Lin}\big\{(\ch_{i}(\o),\o_{\ch_{i}(\o)});0\leq i\leq n\big\}. $$ 

\begin{lem}\label{lem: construct psi}
There exists $\psi^{\epsilon}\in\overline{C}^{2}(\mathcal{Q})$
such that
\beaa
&\psi^{\epsilon}({\bf 0})=\theta_{0}^{\epsilon}({\bf 0})+\epsilon,\q \psi^{\epsilon}\geq \xi ~~ \text{on}~~ \mathcal{\pa \cQ},&\\
&-G\big(\hat\pi_n, \psi^\e(\o), \pa_\o \psi^\e(\o), \pa_{\o\o}^2  \psi^\e(\o) \big)\geq 0\q
 \text{when}~~  \ch_{n}(\o)\le \bar t(\o)<\ch_{n+1}(\o) ,\q \mbox{for all}~n\in\dbN,&
\eeaa
where $ \pa_\o \psi^\e, \pa_{\o\o}^2  \psi^\e$ are the derivatives of $\f^\e$ on the corresponding intervals.
\end{lem}
\begin{proof}
For simplicity, in the proof we omit the superscript $\epsilon$. First, since PDE $(E)_{\epsilon}^{0}$ satisfies  Assumption \ref{assmp: approx PDE} and $G(\o,y,z,\g)$ is decreasing in $y$, there exists a function $v_{0}\in C^2_0(O_\e({\bf 0}))$ such that
\beaa
v_{0}(0)=\theta_{0}(0)+\frac{\epsilon}{2},~~{\bf L}^{0}v_{0}\geq0\ \text{on}\ O_\e({\bf 0})~~\mbox{and}~ v_{0}\geq\theta_{0}\ \text{on}\ \partial O_\e({\bf 0}).
\eeaa
Denote $v_0({\bf 0}; \cd):= v_0(\cd)$. Similarly, applying Assumption \ref{assmp: approx PDE} to PDE $(E)_{\epsilon}^{\hat\pi_{n}}$ ($n\ge 1$),
we can find a function $v_{n}(\hat\pi_{n};\cdot)\in C_0^{2}(O_\e(\hat\pi_n))$
such that
\beaa
& v_{n}(\hat\pi_{n};0)=v_{n-1}\big(\hat\pi_{n-1}; \o_{\ch_{n}(\o)}-\o_{\ch_{n-1}(\o)}\big) + 2^{-n-1}\e,&\\
&{\bf L}^{\hat\pi_{n}} v_{n} (\hat\pi_n;\cd)
\geq 0 ~~ \text{on}~O_\e(\hat\pi_n),\q v_{n}(\hat\pi_{n};\cdot)\geq\theta_{n}(\hat\pi_{n};\cdot) ~~\text{on}~\partial O_\e(\hat\pi_{n}).&
\eeaa
We now give the definition of the required function $\psi: \cQ\rightarrow \dbR$:
\beaa
\psi(\omega)
&:=&
\sum_{n=0}^\infty \Big( v_{n}\big(\hat\pi_n; \omega_{\bar{t}(\o)} - \o_{\ch_n(\o)} \big)+ \e- 2^{-n-1}\e\Big) 1_{\{\ch_{n}(\o)\le \bar t(\o)<\ch_{n+1}(\o)\}}.
\eeaa
Clearly, we have $\psi\in \ol C^2(\cQ)$. Consider a path $\o$ such that $\ch_{n}(\o)\le \bar t(\o)<\ch_{n+1}(\o)$. Since $\psi(\o) \ge v_n\big(\hat\pi_n; \o_{\bar t(\o)} - \o_{\ch_n(\o)}\big)$, it follows from the monotonicity of $G$
\beaa
-G\big(\hat\pi_n, \psi(\o), \pa_\o \psi(\o), \pa_{\o\o}^2  \psi(\o) \big)
 ~\ge ~ {\bf L}^{\hat\pi_n} v_n\big(\hat\pi_n; \o_{\bar t(\o)} - \o_{\ch_n(\o)}\big)
 ~\ge ~ 0.
\eeaa
Finally, we may easily check that
$\psi(0)-\theta_{0}(0)=\frac{\epsilon}{2}+\frac{\e}{2}=\epsilon$, and that $\psi \ge \xi$ on $\pa\cQ$.
\end{proof}

\no Now we have done all the necessary constructions and are ready
to show the main result of the section.

\vspace{5mm}

\no {\bf Proof of Proposition \ref{prop:u=u}}\quad
For any $\epsilon>0$, let $\psi^{\epsilon}$ be as in Lemma \ref{lem: construct psi},
and $\overline{\psi}^{\epsilon}:=\psi^{\epsilon}+\rho(2\epsilon)+\lambda^{-1}\left(\rho(2\epsilon)\right)$,
where $\rho$ is the common modulus of continuity of $\xi$ and $G$, and
$\lambda^{-1}$ is the inverse of the function in Assumption \ref{assump: generator}.
Then clearly $\overline{\psi}^{\epsilon}\in\overline{C}^{2}(\mathcal{Q})$
and bounded. Also,
\[
\overline{\psi}^{\epsilon}(\omega)-\xi(\omega)\geq\psi^{\epsilon}(\omega)+\rho(2\epsilon)-\xi(\omega)\geq\xi(\omega^{\epsilon})-\xi(\omega)+\rho(2\epsilon)\geq0\ \text{on}\ \partial\mathcal{Q}.
\]
Moreover, when $\bar{t}(\omega)\in[H_{n}(\omega),H_{n+1}(\omega))$,
we have that
\begin{eqnarray*}
\mathcal{L}\overline{\psi}^{\epsilon}(\omega) & = & -G\left(\omega,\overline{\psi}^{\epsilon},\partial_{\omega}\psi^{\epsilon},\partial_{\omega\omega}^{2}\psi^{\epsilon}\right)\\
 & \geq & -G\left(\hat{\pi}_{n},\psi^{\epsilon}+\lambda^{-1}\left(\rho(2\epsilon)\right),\partial_{\omega}\psi^{\epsilon},\partial_{\omega\omega}^{2}\psi^{\epsilon}\right)-\rho(2\epsilon)\\
 & \geq & -G\left(\hat{\pi}_{n},\psi^{\epsilon},\partial_{\omega}\psi^{\epsilon},\partial_{\omega\omega}^{2}\psi^{\epsilon}\right)\geq0.
\end{eqnarray*}
Then by the definition of $\overline{u}$ we see that
\be\label{estim: u and theta}
\overline{u}(0)\leq\overline{\psi}^{\epsilon}(0)=\psi^{\epsilon}+\rho(2\epsilon)+\lambda^{-1}\left(\rho(2\epsilon)\right)\leq\theta_{0}^{\epsilon}(0)+\epsilon+\rho(2\epsilon)+\lambda^{-1}\left(\rho(2\epsilon)\right).
\ee
Similarly, $\underline{u}(0)\geq\theta_{0}^{\epsilon}(0)-\epsilon-\rho(2\epsilon)-\lambda^{-1}\left(\rho(2\epsilon)\right)$.
That implies that
\[
\overline{u}(0)-\underline{u}(0)\leq2\epsilon+2\rho(2\epsilon)+2\lambda^{-1}\left(\rho(2\epsilon)\right).
\]
Since $\epsilon$ is arbitrary, this shows that $\overline{u}(0)=\underline{u}(0)$.
Similarly, we can show that $\overline{u}(\omega)=\underline{u}(\omega)$
for all $\omega\in\mathcal{Q}$.
\qed

\section{Existence}\label{sec: Existence}

In this section, we verify that 
\be\label{defn: u}
u~:=~\ol u~=~\ul u
\ee
is the unique $\cP^L$-viscosity solution in ${\rm BUC}(\cQ)$ to the PPDE (\ref{PPDE}) for $L\ge L_0$. We will prove that $u \in {\rm BUC}(\cQ)$ in Subsection {\ref{subsec: regularity} and $u$ satisfies the viscosity property in Subsection {\ref{subsec: visco}}. 

\subsection{Regularity}\label{subsec: regularity}

The non-continuity of the hitting time $\ch_Q(\cd)$ brings difficulty to the proof of the regularity of $u$. One cannot adapt the method used in \cite{ETZ3}. In our approach, we make use of the estimate \eqref{eq:theta uni} for the solution of the path-frozen PDE's.

\begin{prop}\label{prop: u bounded}
Let Assumption \ref{assump: generator} hold and $\xi\in\mbox{\rm BUC}(\pa \cQ)$.
Then $\overline{u}$ is bounded from above and $\underline{u}$ is
bounded from below. 
\end{prop}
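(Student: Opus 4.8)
The plan is to produce, for any fixed constant $C$, explicit members of the classes $\ol{\cD}^\xi_Q(\o)$ and $\ul{\cD}^\xi_Q(\o)$ that dominate $\ol u$ from above and $\ul u$ from below, respectively. Since $\xi\in\mbox{BUC}(\pa\cQ)$ it is bounded, say $|\xi|\le C_0$ (and by Assumption \ref{assump: generator}(i) the same $C_0$ controls $|G(\cdot,0,0,0)|$). I would look for a constant function, or a simple quadratic-in-$\o_{\bar t}$ function, that serves as a classical supersolution: because $\cQ$ is a domain associated with a bounded convex set $Q\in\cR$, the path $\o\mapsto\o_{\bar t(\o)}$ ranges over the bounded set $Q$, so a function of the form $\psi(\o):=a + b\,|\o_{\bar t(\o)}|^2$ with suitable constants $a,b$ is in $C^2(\cQ)$ (hence in $\ol C^2(\cQ)$, using a trivial partition with $\ch_1=\ch_Q$) and, by uniform ellipticity and the Lipschitz/monotonicity properties of $G$, satisfies $\cL^\o\psi\ge 0$ and $\psi\ge\xi$ on $\pa\cQ$ once $a,b$ are taken large enough.

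Concretely, for the upper bound I would take $\psi$ of the form $\psi(\o) = K - \kappa\,|\o_{\bar t(\o)}|^2$ or $\psi(\o)=K+\kappa\,|\o_{\bar t(\o)}|^2$, compute its Dupire derivatives (which coincide with the classical spatial derivatives of the underlying function on $\dbR^d$ by the Remark following Definition \ref{def: derivatives for Omega e}), and check that $-G(\o,\psi,\pa_\o\psi,\pa^2_{\o\o}\psi)\ge 0$ pointwise. Using Assumption \ref{assump: generator}(ii) ($G$ increasing in $\gamma$ with the $\tfrac{1}{L_0}I_d$ lower bound), (iii) (Lipschitz in $(y,z,\gamma)$ with constant $L_0$), and the boundedness of $|\o_{\bar t}|$ on $\ol Q$, the required inequality reduces to a finite number of scalar inequalities among the constants $C_0,L_0$, $\mathrm{diam}(Q)$, $K$, $\kappa$; choosing $\kappa$ and then $K$ large makes them all hold, and also makes $\psi\ge C_0\ge\xi$ on $\pa\cQ$. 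This exhibits $\psi\in\ol{\cD}^\xi_Q(\mathbf 0)$, so by definition $\ol u(\mathbf 0)\le\psi(\mathbf 0)<\infty$; the same argument at an arbitrary $\o\in\cQ$ (working with $G^\o$, $\xi^\o$, which still satisfy Assumption \ref{assump: generator} with the same constants) gives $\ol u(\o)<\infty$, so $\ol u$ is bounded above. The lower bound for $\ul u$ is completely symmetric, using a member of $\ul{\cD}^\xi_Q$ of the opposite sign.

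The main obstacle I anticipate is purely bookkeeping: verifying that the candidate $\psi$ really lies in $\ol C^2(\cQ)$ in the sense of Definition \ref{Def: overbar C2} — i.e. that the trivial stopping-time sequence $\ch_0=0$, $\ch_i=\ch_Q$ for $i\ge 1$ satisfies conditions (1)--(3), which follows from $\ch_Q\in\cH$ (since $Q\in\cR$) and Proposition \ref{prop: c1 stopping} giving $\ol\cE^L[\ch_Q]<\infty$ — and that the Itô decomposition in (3) holds with $\pa_\o\psi$, $\pa^2_{\o\o}\psi$ the classical derivatives, which is just the classical Itô formula applied to $x\mapsto K\pm\kappa|x|^2$. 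None of this is deep; the only genuinely quantitative point is pinning down how large $\kappa$ and $K$ must be, and that is a one-line estimate using $\sup_{x\in Q}|x|<\infty$ together with Assumption \ref{assump: generator}(i)--(iii). I would present the upper-bound construction in full and remark that the lower bound follows by an identical argument with signs reversed.
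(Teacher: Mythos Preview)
Your detailed argument invokes only Assumption~\ref{assump: generator}(i)--(iii), and these alone do not close it. With the candidate $\psi(\o)=K-\kappa|\o_{\bar t}|^2$ one has $\pa_\o\psi=-2\kappa\,\o_{\bar t}$ and $\pa^2_{\o\o}\psi=-2\kappa I_d$; the ellipticity gain from (ii) is $2\kappa d/L_0$, the $z$-Lipschitz loss from (iii) is $2L_0\kappa\sup_{x\in Q}|x|$, and --- crucially --- the $y$-Lipschitz loss from (iii) is $L_0|\psi|\sim L_0 K$. Hence ``choosing $K$ large'' moves the inequality the wrong way, and once $\sup_{x\in Q}|x|\ge d/L_0^2$ no choice of $(K,\kappa)$ yields $\cL\psi\ge 0$ from (i)--(iii) alone. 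The scalar inequalities you anticipate are simply inconsistent for large domains.

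The missing ingredient is (iv), the strict monotonicity of $G$ in $y$ encoded by the function $\lambda$, and once you use it the quadratic is superfluous. The paper takes the \emph{constant} $\psi:=\lambda^{-1}(C_0)+C_0$. Then $\pa_\o\psi=\pa^2_{\o\o}\psi=0$, and (iv) together with (i) gives directly
\[
\cL^\o\psi=-G^\o(\cdot,\psi,0,0)\ \ge\ \lambda(\psi)-G^\o(\cdot,0,0,0)\ \ge\ C_0-C_0\ =\ 0,
\]
while $\psi\ge C_0\ge\xi$ on $\pa\cQ$. This places $\psi\in\ol{\cD}^\xi_Q(\o)$ for every $\o$, so $\ol u\le\lambda^{-1}(C_0)+C_0$; the lower bound for $\ul u$ is the mirror image with $-\psi$. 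All the $\ol C^2$ bookkeeping you worry about then becomes vacuous, since a constant trivially satisfies Definition~\ref{Def: overbar C2}.
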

\begin{proof}
Assume that $|\xi|\leq C_0$. Define:
\[
\psi:=\lambda^{-1}\left(C_{0}\right)+C_{0}.
\]
Obviously $\psi\in\bar{C}^{2}$. Observe that $\psi_{T}\geq C_{0}\geq\xi$.
Also,
\[
\mathcal{L}^\o\psi_{s}=-G^{\omega}(\cdot,\psi_{s},0,0)\geq C_{0}-G^{\omega}(\cdot,0,0,0)\geq0.
\]
It follows that $\psi\in\overline{\mathcal{D}}_{Q}^{\xi}(\omega)$,
and thus $\overline{u}(\omega)\leq\psi(0)=\lambda^{-1}\left(C_{0}\right)+C_{0}$.
Similarly, one can show that $\underline{u}(\omega)\geq-\lambda^{-1}\left(C_{0}\right)-C_{0}$.
\end{proof}

\begin{prop}\label{prop: UC}
The function $u$ defined in \reff{defn: u} is uniformly continuous in $\mathcal{Q}$.
\end{prop}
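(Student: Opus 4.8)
The plan is to exploit the construction of $u$ as a limit of the path-frozen solutions $\theta^\epsilon_0$ together with the uniform continuity estimate \eqref{eq:theta uni} from Lemma \ref{lem:frozen construct}(ii). The key observation is that the proof of Proposition \ref{prop:u=u} actually shows something quantitative: combining \eqref{estim: u and theta} with its lower counterpart, we get for every $\epsilon>0$ and every $\omega\in\mathcal{Q}$
\[
\big|u(\omega)-\theta^\epsilon_0(\omega)\big|\ \le\ \epsilon+\rho(2\epsilon)+\lambda^{-1}\big(\rho(2\epsilon)\big)=:m(\epsilon),
\]
where I must first check that \eqref{estim: u and theta} holds not just at $\boldsymbol{0}$ but at a general $\omega$ — this is the ``similarly'' at the end of that proof, and it requires running the Perron construction of Lemma \ref{lem: construct psi} shifted by $\omega$, i.e. applying it to the path-frozen PDEs along $\omega\bar\otimes\hat\pi_n$. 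Here I am writing $\theta^\epsilon_0(\omega)$ for the quantity $\theta^{\omega,\epsilon}_0(0;0)$ appearing in Lemma \ref{lem:frozen construct}, which is exactly the object whose $\omega$-dependence is controlled by \eqref{eq:theta uni}.

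Granting that, the argument is a standard three-$\epsilon$ estimate. Fix $\omega^1,\omega^2\in\mathcal{Q}$. Then
\[
\big|u(\omega^1)-u(\omega^2)\big|\ \le\ \big|u(\omega^1)-\theta^\epsilon_0(\omega^1)\big|+\big|\theta^\epsilon_0(\omega^1)-\theta^\epsilon_0(\omega^2)\big|+\big|\theta^\epsilon_0(\omega^2)-u(\omega^2)\big|\ \le\ 2m(\epsilon)+\rho_\epsilon\big(d^e(\omega^1,\omega^2)\big),
\]
using \eqref{eq:theta uni} for the middle term. Given $\eta>0$, first choose $\epsilon$ so that $2m(\epsilon)<\eta/2$ (possible since $\rho$ and $\lambda^{-1}$ are moduli of continuity vanishing at $0$), then choose $\delta>0$ so that $\rho_\epsilon(\delta)<\eta/2$; for $d^e(\omega^1,\omega^2)<\delta$ we obtain $|u(\omega^1)-u(\omega^2)|<\eta$. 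This gives uniform continuity of $u$ on $\mathcal{Q}$.

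The main obstacle is the first step: verifying that the estimate $|u(\omega)-\theta^\epsilon_0(\omega)|\le m(\epsilon)$ is genuinely uniform in $\omega\in\mathcal{Q}$. One has to check that the functions $v_n$ produced by Assumption \ref{assmp: approx PDE} in Lemma \ref{lem: construct psi}, and hence $\psi^\epsilon$ and $\overline\psi^\epsilon$, can be built along the shifted tree $\omega\bar\otimes\hat\pi_n$ with the gap constant $\epsilon+\rho(2\epsilon)+\lambda^{-1}(\rho(2\epsilon))$ independent of $\omega$; this works because the only $\omega$-dependent inputs are the path-frozen PDEs $(E)^{\omega\bar\otimes\hat\pi_n}_\epsilon$ and the boundary datum $\xi^\omega$, and the modulus $\rho$ of $\xi$ and $G$ is uniform in $\omega$ by Assumption \ref{assump: generator}(iii) and $\xi\in\mbox{BUC}(\partial\mathcal{Q})$. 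Also one should note that $\theta^\epsilon_0$ is genuinely a time-invariant function on $\Omega^e$ (it depends on $\omega$ only through $\omega_{\bar t(\omega)}$ and the geometry relative to $Q$), so the quantity $\theta^\epsilon_0(\omega)$ and the estimate \eqref{eq:theta uni} are well-posed in the $d^e$-metric. Once these bookkeeping points are settled, the three-term estimate above closes the argument.
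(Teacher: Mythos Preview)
Your proposal is correct and follows essentially the same approach as the paper: approximate $u(\omega)$ by $\theta_0^{\omega,\epsilon}(0;0)$ with an error bounded uniformly in $\omega$ (via the shifted version of \eqref{estim: u and theta}), then invoke \eqref{eq:theta uni} to control the $\omega$-dependence of $\theta_0^{\omega,\epsilon}$. The paper's write-up is marginally slicker in that it uses $u=\overline{u}=\underline{u}$ to get a one-sided bound $u(\omega^1)-u(\omega^2)=\overline{u}(\omega^1)-\underline{u}(\omega^2)$ directly, then symmetrizes, whereas you use a three-term triangle inequality; but the substance is identical, and your discussion of why the estimate at a general $\omega$ holds (the ``similarly'' step) is exactly the point that needs to be checked.
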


\proof Recall \eqref{estim: u and theta}, i.e. for $\omega^{1},\omega^{2}\in\mathcal{Q}$, it holds that
\beaa
\overline{u}(\omega^{1})\leq\theta_{0}^{\omega^{1}}(0)+\epsilon+\rho(2\epsilon)
\q \mbox{and} \q 
\underline{u}(\omega^{2})\geq\theta_{0}^{\omega^{2}}(0)-\epsilon-\rho(2\epsilon).
\eeaa
Hence, it follows from Lemma \ref{lem:frozen construct} that
\begin{multline*}
u(\omega^{1})-u(\omega^{2})~=~\overline{u}(\omega^{1})-\underline{u}(\omega^{2})\\
~\leq~\theta_{0}^{\omega^{1}}(0)-\theta_{0}^{\omega^{2}}(0)+2(\epsilon+\rho(2\epsilon))
~\leq~ C_\e \rho(d^{e}(\omega^{1},\omega^{2}))+3\big(\e+\rho(2\epsilon)\big),
\q \mbox{for all}\q\e>0.
\end{multline*}
By exchanging the roles of $\omega^{1}$ and $\omega^{2}$, we obtain
$|u(\omega^{1})-u(\omega^{2})| \le C_\e \rho(d^{e}(\omega^{1},\omega^{2}))+3\big(\e+\rho(2\epsilon)\big)$, from which the uniform continuity of $u$ can be easily deduced.
\qed

\subsection{Viscosity property}\label{subsec: visco}

After having shown that $u$ is uniformly
continuous, we need to verify that it indeed satisfies the viscosity property. The following proof is similar to that of Proposition 4.3 in \cite{ETZ3}.
\begin{prop}
The function $u$ defined in \eqref{defn: u} is a $\cP^L$-viscosity solution to PPDE (\ref{PPDE}) for $L \ge L_0$.
\end{prop}
\begin{proof}
We only prove that $\overline{u}$ is
a $\cP^L$-viscosity supersolution. The subsolution property can be proved similarly. Without loss of generality, we only show the $\cP^{L_0}$-viscosity supersolution property at the point ${\bf 0}$. Assume the contrary, i.e.
there exists $\varphi\in\overline{\mathcal{A}}^{L_{0}}\overline{u}({\bf 0})$
such that $-c:=\mathcal{L}\varphi({\bf 0})<0$. For any $\psi\in\overline{\mathcal{D}}_{Q}^{\xi}({\bf 0})$
and $\omega\in\mathcal{Q}$ it is clear that $\psi^{\omega}\in\overline{\mathcal{D}}_{Q}^{\xi}(\omega)$
and $\psi(\omega)\geq\overline{u}(\omega)$. Now by the definition
of $\overline{u}$, there exists $\psi^{n}\in\overline{C}^{2}(\mathcal{Q})$
such that
\begin{equation}
\delta_{n}:=\psi^{n}(0)-\overline{u}(0)\downarrow0\text{ as }n\rightarrow\infty,\ \mathcal{L}\psi^{n}(\omega)\geq0,\ \omega\in\mathcal{Q}.\label{eq:psi n}
\end{equation}
Let $\ch_\e:=\e\we\ch_{O_\e}$ be a localization of test function $\f$.
Since $\varphi\in C^{2}(\cO_\e)$ and $\overline{u}\in\mathrm{BUC}(\mathcal{Q})$,
without loss of generality we may assume that
\be\label{phipetit}
\mathcal{L}\varphi(\omega_{t\wedge\cdot})\leq-\frac{c}{2}\text{ and }|\varphi_{t}-\varphi_{0}|+\left|\overline{u}_{t}-\overline{u}_{0}\right|\leq\frac{c}{6L_{0}}\text{ for all }t\leq \ch_{O_\e}.
\ee
Since $\varphi\in\mathcal{\overline{A}}^{L_{0}}\overline{u}({\bf 0})$,
this implies for all $\mathbb{P}\in\mathcal{P}^{L_{0}}$ that :
\begin{equation}
0\geq\mathbb{E}^{\mathbb{P}}\left[(\varphi-\overline{u})_{\ch_{\e}}\right]\geq\mathbb{E}^{\mathbb{P}}\left[(\varphi-\psi^{n})_{\ch_{\e}}\right].\label{eq:ep}
\end{equation}
Denote $\mathcal{G}^{\mathbb{P}}\phi:=\alpha^{\mathbb{P}}\cdot\partial_{\omega}\phi+\frac{1}{2}(\beta^{\mathbb{P}})^{2}:\partial_{\omega\omega}^{2}\phi$.
Then, since $\varphi\in C^{2}(\cO_\e)$ and $\psi^{n}\in\ol C^2(\cQ)$, it follows from (\ref{eq:psi n}) that:
\begin{eqnarray*}
\delta_{n} 
& \geq &
 \mathbb{E}^{\mathbb{P}}\left[(\varphi-\psi^{n})_{\ch_{\e}}-(\varphi-\psi^{n})_{0}\right]
 =
 \mathbb{E}^{\mathbb{P}}\left[\int_{0}^{\ch_{\e}}\mathcal{G}^{\mathbb{P}}(\varphi-\psi^{n})(B_{s\we\cd})ds\right]\\
 & \geq &
  \mathbb{E}^{\mathbb{P}}\biggl[\int_{0}^{\ch_{\e}} \Big(\frac{c}{2}-G(\cd,\varphi,\partial_{\omega}\varphi,\partial_{\omega\omega}^{2}\varphi)+G(\cd,\psi^{n},\partial_{\omega}\psi^{n},\partial_{\omega\omega}^{2}\psi^{n})+\mathcal{G}^{\mathbb{P}}(\varphi-\psi^{n})\Big) (B_{s\wedge\cdot})ds\biggr]\\
 & \ge & 
 \mathbb{E}^{\mathbb{P}}\biggl[\int_{0}^{\ch_{\e}} \Big(\frac{c}{2}-G(\cd,\varphi,\partial_{\omega}\varphi,\partial_{\omega\omega}^{2}\varphi) +G(\cd,\overline{u},\partial_{\omega}\psi^{n},\partial_{\omega\omega}^{2}\psi^{n})+\mathcal{G}^{\mathbb{P}}(\varphi-\psi^{n})\Big)(B_{s\wedge\cdot})ds\biggr],
\end{eqnarray*}
where the last inequality is due to the monotonicity in $y$ of $G$. Since $\varphi_{0}=\overline{u}_{0}$ and $G$ is $L_0$-Lipschitz continuous in $y$, it follows from \eqref{phipetit} that
\begin{eqnarray*}
\delta_{n} 
& \geq &
 \mathbb{E}^{\mathbb{P}}\biggl[\int_{0}^{\ch_{\e}}\Big(\frac{c}{3}-G(\cd,\overline{u}_{0},\partial_{\omega}\varphi,\partial_{\omega\omega}^{2}\varphi)+G(\cd,\overline{u}_{0},\partial_{\omega}\psi^{n},\partial_{\omega\omega}^{2}\psi^{n})+\mathcal{G}^{\mathbb{P}}(\varphi-\psi^{n})\Big) (B_{s\wedge\cdot})ds\biggr].
\end{eqnarray*}
We next let $\eta>0$, and for each $n$, define $\tau_{0}^{n}:=0$
and
\begin{eqnarray*}
\tau_{j+1}^{n}(\o): & = & \ch_{\e}(\o) \wedge\inf\{t\geq\tau_{j}^{n}:\rho(d^{e}(\omega_{t\wedge\cdot},\omega_{\tau_{j}^{n}\wedge\cdot}))+|\partial_{\omega}\varphi(\omega_{t\wedge\cdot})-\partial_{\omega}\varphi(\omega_{\tau_{j}^{n}\wedge\cdot})|\\
 &  & +|\partial_{\omega\omega}^{2}\varphi(\omega_{t\wedge\cdot})-\partial_{\omega\omega}^{2}\varphi(\omega_{\tau_{j}^{n}\wedge\cdot})|+|\partial_{\omega}\psi^{n}(\omega_{t\wedge\cdot})-\partial_{\omega}\psi^{n}(\omega_{\tau_{j}^{n}\wedge\cdot})|\\
 &  & +|\partial_{\omega\omega}^{2}\psi^{n}(\omega_{t\wedge\cdot})-\partial_{\omega\omega}^{2}\psi^{n}(\omega_{\tau_{j}^{n}\wedge\cdot})|\geq\eta\},
\end{eqnarray*}
where $\rho$ is a modulus of continuity in $\o$ of $G$.
Since $\f\in C^2(\cO_\e)$ and $\psi^n\in \ol C^2(\cQ)$, one can easily check that $\tau_{j}^{n}\uparrow \ch_{\e}$, $\mathcal{P}^{L_{0}}$-q.s. as $j\rightarrow\infty$. Thus,
\begin{eqnarray*}
\delta_{n} 
& \geq &
 \Big(\frac{c}{3}-C\eta\Big)\mathbb{E}^{\mathbb{P}}[\ch_{\e}]+\sum_{j\geq0}\mathbb{E}^{\mathbb{P}} \big(\tau_{j}^{n}-\tau_{j+1}^{n}\big)\Bigl(-G(\cdot,\overline{u}_{0},\partial_{\omega}\varphi,\partial_{\omega\omega}^{2}\varphi)\\
 &  & \q\q\q\q\q\q\q\q\q\q\q\q\q\q\q\q\q\q +G(\cdot,\overline{u}_{0},\partial_{\omega}\psi^{n},\partial_{\omega\omega}^{2}\psi^{n})+\mathcal{G}^{\mathbb{P}}(\varphi-\psi^{n})\Bigr)(B_{\tau_{j}^{n}\we\cd})\\
 & = & \Big(\frac{c}{3}-C\eta\Big)  \mathbb{E}^{\mathbb{P}}[\ch_{\e}]+\sum_{j\geq0}\mathbb{E}^{\mathbb{P}}\big(\tau_{j}^{n}-\tau_{j+1}^{n}\big) \Bigl(\alpha_{j}^{n}\cdot\partial_{\omega}(\psi^{n}-\varphi)
 +\frac{1}{2}(\beta_{j}^{n})^{2}:\partial_{\omega\omega}^{2}(\psi^{n}-\varphi)+\mathcal{G}^{\mathbb{P}}(\varphi-\psi^{n})\Bigr)(B_{\tau_{j}^{n}\we\cd}),
\end{eqnarray*}
for some $\alpha_{j}^{n}$, $\beta_{j}^{n}$ such that $|a^n_j|\le L$ and $\b^n_j\in\dbH^0_L$. Note that
$\alpha_{j}^{n}$ and $\beta_{j}^{n}$ are both $\mathcal{F}_{\tau_{j}^{n}}$-measurable. Take $\mathbb{P}_{n}\in\mathcal{P}^{L_{0}}$ such that $\alpha_{t}^{\mathbb{P}_{n}}=\alpha_{j}^{n}$,
$\beta_{t}^{\mathbb{P}_{n}}=\beta_{j}^{n}$ for $t\in[\tau_{j}^{n},\tau_{j+1}^{n})$.
Then
\beaa
\delta_{n} &\geq & \Big(\frac{c}{3}-C\eta\Big)\mathbb{E}^{\mathbb{P}_{n}}[\ch_{\e}].
\eeaa
Let $\eta:=\frac{c}{6C}$. It follows that $\underline{\mathcal{E}}^{L_{0}}\left[\ch_{\e}\right] \leq \mathbb{E}^{\mathbb{P}_{n}}\left[\ch_{\e}\right] \leq \frac{6}{c}\delta_{n}$.
By letting $n\rightarrow\infty$, we get $\underline{\mathcal{E}}^{L_{0}}\left[\ch_{\e}\right]=0$, contradiction.
\end{proof}

\section{Path-dependent time-invariant stochastic control}\label{sec: app}

In this section, we present an application of fully nonlinear elliptic PPDE.  An important question which is most relevant since the recent financial crisis is the risk of model mis-specification. The uncertain volatility model (see  Avellaneda, Levy and Paras \cite{ALP}, Lyons \cite{Lyons} or Nutz \cite{N-G}) provides a conservative answer to this problem.  

In the present application, the canonical process $B$ represents the price process of some primitive asset, and our objective is the hedging of the derivative security defined by the payoff $\xi(B_\cdot)$ at some maturity $\ch_Q$ defined as the exiting time from some domain $Q$.

In contrast with the standard Black-Scholes modeling, we assume that the probability space $(\O,\cF)$ is endowed with a family of probability measures $\cP^{\rm UVM}$. In the uncertain volatility model, the quadratic variation of the canonical process is assumed to lie between two given bounds,
$$\ul\si^2dt\le d\langle B\rangle_t\le \ol\si^2dt,\q 
\dbP\mbox{-a.s. for all}~\dbP\in\cP^{\rm UVM}.
$$ 
Then, by the possible frictionless trading of the underlying asset, it is well known that the non-arbitrage condition is characterized by the existence of an equivalent martingale measure. Consequently, we take
\beaa
\cP^{\rm UVM}:=\{\dbP\in \cP^{\infty}: B~\mbox{is a continuous}~\dbP\mbox{-martingale  and}~\frac{d\langle B\rangle_t}{dt}\in [\ul\si^2,\ol\si^2],~\dbP\mbox{-a.s.}\}.
\eeaa 
The superhedging problem under model uncertainty was initially formulated by Denis \& Martini \cite{DM} and Neufeld \& Nutz \cite{Nutz}, and involves delicate quasi-sure analysis. Their main result expresses the cost of robust superheging as 
$$u_0:=\ol\cE^{\rm UVM}\big[e^{-r\ch_Q}\xi(B_{\ch_Q\we\cd})\big]
:=\ol\cE^{\cP^{\rm UVM}}\big[e^{-r\ch_Q}\xi(B_{\ch_Q\we\cd})\big],$$
where $r$ is the discount rate. Further, define $u$ on $\O^e$ as:
\be\label{stochcontol}
u(\o) := \ol\cE^{\rm UVM}\big[e^{-r \ch_{Q^\o}} \xi\big(\o\bar\otimes B_{\ch_{Q^\o}\we\cdot}\big)\big],
\q\mbox{for all}~\o\in\cQ.
\ee
We are interested in characterizing $u$ as a viscosity solution of the corresponding fully nonlinear elliptic PPDE.

\begin{assum}\label{assum: app}
Assume that 
\beaa
\xi\in {\rm BUC}(\pa \cQ),\q \ul\si>0,\q \mbox{and the discount rate}~ r\ge0.
\eeaa
\end{assum}

\begin{prop}\label{prop: application}
Let $L$ be a constant such that $\frac{1}{L}\le \ul\si$ and $L\ge \ol\si$. Under Assumption \ref{assum: app}, the function $u$ defined in \eqref{stochcontol} is in ${\rm BUC}(\cQ)$ and is a $\cP^{L}$-viscosity solution to the elliptic path-dependent HJB equation:
\beaa
ru-\sup_{\g \in[\ul\si, \ol\si]}\frac12 \g^2 \pa^2_{\o\o}u~=~0~\mbox{on}~\cQ,~~\mbox{and}~~u=\xi~\mbox{on}~\pa\cQ
\eeaa
\end{prop}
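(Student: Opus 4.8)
\quad The stated equation is the elliptic PPDE \eqref{PPDE} for the (path-independent) nonlinearity
\[
G(\o,y,z,\gamma):=\sup_{\b\in[\ul\si I_d,\ol\si I_d]}\tfrac12\b^2:\gamma+L|z|-ry ,
\]
so the first step is to check that $G$ satisfies Assumptions \ref{assump: generator} and \ref{assmp: approx PDE}. Assumption \ref{assump: generator} is straightforward: $G$ does not depend on $\o$, so the continuity in $\o$ in (iii) is automatic and the Lipschitz continuity in $(y,z,\gamma)$ is clear with constant $L_0:=\max(r,L,\tfrac12\ol\si^2 d,2/\ul\si^2)$; $|G(\cdot,0,0,0)|=0\le C_0$; picking a maximiser $\b^*$ of $\tfrac12\b^2:\gamma_2$ gives, for $\gamma_1\ge\gamma_2$, $G(\o,y,z,\gamma_1)-G(\o,y,z,\gamma_2)\ge\tfrac12(\b^*)^2:(\gamma_1-\gamma_2)\ge\tfrac{\ul\si^2}{2}I_d:(\gamma_1-\gamma_2)$, i.e.\ uniform ellipticity (this is where $\ul\si>0$ enters); and, since $r>0$, $G$ is decreasing in $y$ with $\lambda(y)=ry$. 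For Assumption \ref{assmp: approx PDE}, note that $G(\o,\cdot)$ is convex in $(z,\gamma)$ and uniformly elliptic, so on the convex domain $O_\e\cap Q^\o$ the frozen Dirichlet problem $(E)^\o_\e$ admits a solution in $C^2_0(O_\e(\o))$ by the Evans--Krylov/Caffarelli--Cabr\'e interior regularity theory (cf.\ the Remark after Assumption \ref{assmp: approx PDE}), whence $\ol v=\ul v$. By Theorem \ref{wellposedness} the PPDE then has a unique viscosity solution in $\mathrm{BUC}(\cQ)$, and the task reduces to showing that the function $u$ of \eqref{stochcontol} is BUC and is a viscosity solution.

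\noindent Boundedness is immediate: $0\le e^{-r\ch^\o_Q}\le1$ gives $|u(\o)|\le\|\xi\|_\infty$. For the uniform continuity I would proceed as in Lemma \ref{lem: s.t. uc}. For $\o\in\cQ$ the hitting time $\ch^\o_Q$, viewed as a functional of the canonical process, is the hitting time of the domain $Q^\o=Q-\o_{\bar t(\o)}$, hence depends on $\o$ only through the endpoint $\o_{\bar t(\o)}\in Q$, and $\cP^{\rm UVM}\subset\cP^{L_0}$ for $L_0$ large. Writing, for $\o^1,\o^2\in\cQ$,
\[
e^{-r\ch^{\o^1}_Q}\xi\big((B_{\ch_Q\we\cdot})^{\o^1}\big)-e^{-r\ch^{\o^2}_Q}\xi\big((B_{\ch_Q\we\cdot})^{\o^2}\big)
\]
as a discount-factor increment, bounded by $r\,|\ch^{\o^1}_Q-\ch^{\o^2}_Q|$, plus a payoff increment, and then invoking the uniform hitting-time estimate of Proposition \ref{prop: estimate hitting} together with the $L^2$-control of the oscillation of $B$ on $[\ch^{\o^1}_Q\we\ch^{\o^2}_Q,\ch^{\o^1}_Q\vee\ch^{\o^2}_Q]$, the convexity of the modulus of $\xi$ and Jensen's inequality produce a modulus of continuity for $u$. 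Here $\ul\si>0$ is what makes Proposition \ref{prop: estimate hitting} and $\ol\cE^{\rm UVM}[\ch^\o_Q]<\infty$ available, and $r>0$ keeps the discount factor bounded.

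\noindent To prove the viscosity property I would first establish the dynamic programming principle
\[
u(\o)=\ol\cE^{\rm UVM}\Big[e^{-r(\t\we\ch^\o_Q)}\,u\big((\o\bar\otimes B)_{(\t\we\ch^\o_Q)\we\cdot}\big)\Big],\qquad \o\in\cQ,\ \t\in\cT,
\]
which follows from the tower/stability property of the martingale-constrained sublinear expectation $\ol\cE^{\rm UVM}$, in the spirit of \cite{NvH} and \cite{Nutz}, together with the continuity of $u$ and $\xi$ just obtained. Given the DPP, the verification that $u$ is an $L_0$-viscosity sub- and supersolution of \eqref{PPDE} runs exactly as in Proposition 4.3 of \cite{ETZ3} (the argument used in Section \ref{subsec: visco}): for a test function $\f\in\ul\cA^{L_0}u(\o)$ (resp.\ $\ol\cA^{L_0}u(\o)$) with localisation $\ch_{O_\e}$, one combines its defining inequality for $\f-u^\o$ over $\cP^{L_0}$ with the DPP read along near-optimal (resp.\ all) measures of $\cP^{\rm UVM}\subset\cP^{L_0}$, applies It\^o's formula to the smooth $\f$, and lets $\e\downarrow0$; the discounting produces the $ry$-term, a volatility attaining $\sup_\b\tfrac12\b^2:\pa^2_{\o\o}\f_0$ produces the Hamiltonian, and the drift room of $\cP^{L_0}$ accounts for the $L|z|$-term, so the required inequality at $\o$ follows. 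By the uniqueness part of Theorem \ref{wellposedness}, $u$ is then the unique BUC viscosity solution.

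\noindent The main obstacle is this last identification, and inside it the two delicate points are: (i) proving the regularity of $u$ in the absence of continuity of $\ch_Q$ — bypassed above by reducing to the endpoint of the path and using the uniform hitting-time estimate of the appendix rather than continuity of $\ch_Q$; and (ii) establishing the DPP for $\ol\cE^{\rm UVM}$ rigorously and checking its compatibility with the $\cP^{L_0}$-localised test-function classes that define viscosity solutions.
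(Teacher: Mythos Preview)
Your route is broadly correct and close in spirit to the paper's, but the subsolution step is organised differently. The paper does \emph{not} establish a general DPP for $\ol\cE^{\rm UVM}$ and then argue with near-optimal measures. Instead it proves a dedicated lemma (Lemma \ref{lem: optimal prob}): using weak compactness of $\cP^{\rm UVM}$ together with a dyadic partition/diagonal construction, it produces a single $\dbP^*\in\cP^{\rm UVM}$ under which $e^{-rt}u_t$ is an exact martingale on $[0,\ch_Q]$. With this $\dbP^*$ in hand the subsolution verification becomes straightforward: the defining inequality of $\f\in\ul\cA^L u(0)$ evaluated under $\dbP^*$, combined with $u_0=\dbE^{\dbP^*}[e^{-r(\ch_{O_\e}\we h)}u_{\ch_{O_\e}\we h}]$, yields after It\^o and division by $h$ the required inequality. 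Your near-optimal approach is also viable, but you must be explicit that for each $h$ you choose $\dbP_h\in\cP^{\rm UVM}$ with defect $o(h)$ in the DPP, so the error survives division by $h$; ``let $\e\downarrow0$'' is not the right limit here. The paper's route trades the full DPP for the weak-compactness lemma, which is what makes the subsolution computation clean.

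Two smaller remarks. First, your pointer to ``the argument used in Section \ref{subsec: visco}'' is off: that section establishes the viscosity property of the Perron construction $\ol u=\ul u$ by approximating from within $\ol C^2(\cQ)$ and has nothing to do with DPP for a control value function; the paper's proof of Proposition \ref{prop: application} is a direct verification, not a reduction to Section \ref{subsec: visco}. Second, checking Assumptions \ref{assump: generator} and \ref{assmp: approx PDE} and invoking Theorem \ref{wellposedness} is not needed for what is actually claimed (that $u$ is \emph{a} viscosity solution), and the paper does not do it; your detour through Evans--Krylov/Caffarelli--Cabr\'e for Assumption \ref{assmp: approx PDE} is therefore superfluous for this proposition.
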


\begin{lem}
The function $u$ defined in \eqref{stochcontol} is in ${\rm BUC}(\cQ)$.
\end{lem}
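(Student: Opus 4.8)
The goal is to show that $u$ defined in \eqref{stochcontol} is bounded and uniformly continuous with respect to $d^e(\cdot,\cdot)$. Boundedness is immediate: since $\xi\in\mbox{\rm BUC}(\O^e)$ it is bounded by some $C_0$, and because $r>0$ and $e^{-r\ch^\o_Q}\in(0,1]$, we have $|u(\o)|\le C_0$ for every $\o$. So the work is entirely in the continuity estimate, and the plan is to mimic the argument of Lemma \ref{lem: s.t. uc}, replacing the generic hitting time estimates by Proposition \ref{prop: estimate hitting} from the Appendix.

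\textbf{Key steps.} First I would fix $\o^1,\o^2\in\O^e$ and, writing $x_i:=\o^i_{\bar t(\o^i)}$ for their endpoints, note that for each $\dbP\in\cP^{\rm UVM}$ the concatenated path $(B_{\ch_Q\we\cdot})^{\o^i}$ differs from the other only through the shift of starting point $x_i$ and through the (random) hitting times $\ch_Q^{\o^i}=\ch_{Q^{x_i}}$. Then I would split
\beaa
|u(\o^1)-u(\o^2)|
&\le& \ol\cE^{\rm UVM}\Big[\big|e^{-r\ch_Q^{\o^1}}-e^{-r\ch_Q^{\o^2}}\big|\,|\xi|\Big]
 + \ol\cE^{\rm UVM}\Big[e^{-r\ch_Q^{\o^2}}\big|\xi\big((B_{\ch_Q\we\cdot})^{\o^1}\big)-\xi\big((B_{\ch_Q\we\cdot})^{\o^2}\big)\big|\Big].
\eeaa
For the first term, $|e^{-ra}-e^{-rb}|\le r|a-b|$, so it is bounded by $rC_0\,\ol\cE^{\rm UVM}\big[|\ch_Q^{\o^1}-\ch_Q^{\o^2}|\big]$, which by Proposition \ref{prop: estimate hitting} (applied with the bounds $\ul\si,\ol\si$, legitimate since $\cP^{\rm UVM}\subset\cP^\infty$) is at most $\rho(|x_1-x_2|)\le\rho(d^e(\o^1,\o^2))$ for some modulus $\rho$. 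For the second term, I would use the modulus of continuity of $\xi$ together with the elementary pathwise bound
\beaa
d^e\Big((B_{\ch_Q\we\cdot})^{\o^1},(B_{\ch_Q\we\cdot})^{\o^2}\Big)
\le d^e(\o^1,\o^2) + |x_1-x_2| + \big\|B_{\ch_Q^{\o^1}\we\cdot}-B_{\ch_Q^{\o^2}\we\cdot}\big\|_\infty,
\eeaa
then invoke convexity of $\rho$ and Jensen's inequality exactly as in \eqref{estimateBUC}–\eqref{estimatemoment}: $\ol\cE^{\rm UVM}\big[\|B_{\ch_Q^{\o^1}\we\cdot}-B_{\ch_Q^{\o^2}\we\cdot}\|_\infty\big]\le C\big(\ol\cE^{\rm UVM}\big[|\ch_Q^{\o^1}-\ch_Q^{\o^2}|\big]\big)^{1/2}$ via a maximal (Doob/BDG) inequality on the martingale $B$ stopped between the two hitting times, and then Proposition \ref{prop: estimate hitting} again. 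Combining the two pieces yields $|u(\o^1)-u(\o^2)|\le \tilde\rho(d^e(\o^1,\o^2))$ for a new modulus $\tilde\rho$, and finally I would check time-invariance of $u$ — but this is automatic, since $\ch_Q$, the exit time from $Q$, is invariant under time reparametrization and $\xi$ is time-invariant, so $d^e(\o^1,\o^2)=0$ forces $u(\o^1)=u(\o^2)$.

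\textbf{Main obstacle.} The delicate point is the control of $\|B_{\ch_Q^{\o^1}\we\cdot}-B_{\ch_Q^{\o^2}\we\cdot}\|_\infty$ uniformly over $\dbP\in\cP^{\rm UVM}$: one must handle the supremum over the time interval between the two (nested, since $x_1\neq x_2$ makes $Q^{x_1}\neq Q^{x_2}$) exit times and reduce it to $\ol\cE^{\rm UVM}[|\ch_Q^{\o^1}-\ch_Q^{\o^2}|]$. Under $\cP^{\rm UVM}$ the canonical process is a martingale with quadratic variation density in $[\ul\si^2,\ol\si^2]$, so a BDG-type bound gives $\dbE^\dbP[\sup_{\ch_Q^{\o^1}\le t\le \ch_Q^{\o^2}}|B_t-B_{\ch_Q^{\o^1}}|^2]\le C\,\dbE^\dbP[\langle B\rangle_{\ch_Q^{\o^2}}-\langle B\rangle_{\ch_Q^{\o^1}}]\le C\ol\si^2\,\dbE^\dbP[|\ch_Q^{\o^1}-\ch_Q^{\o^2}|]$; the constant is universal, so the bound survives the supremum over $\dbP$. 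Everything else is the same bookkeeping as Lemma \ref{lem: s.t. uc}, and I expect the proof to be short once Proposition \ref{prop: estimate hitting} is in hand.
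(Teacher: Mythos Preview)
Your proposal is correct and follows essentially the same route as the paper. The paper's own proof is a single sentence pointing to the hitting-time estimate \eqref{estimate-hittingtime} (i.e.\ Proposition \ref{prop: estimate hitting}) together with $\xi\in\mbox{BUC}(\partial\cQ)$; you have faithfully unpacked that sentence by mimicking the argument of Lemma \ref{lem: s.t. uc}, splitting into the discount-factor term and the $\xi$-term, and controlling the latter via Jensen plus a BDG/moment bound reducing to $\ol\cE[|\ch_Q^{\o^1}-\ch_Q^{\o^2}|]$.
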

\begin{proof}
As in Lemma \ref{lem: s.t. uc}, the required result follows easily from the fact $\xi\in\mbox{BUC}(\pa\cQ)$.
\end{proof}

\begin{lem}\label{lem:tower-uvm}
We have $u_0 = \ol\cE^{\rm UVM} [e^{-r\t} u_\t]$ (recall that $u_t(\o):=u(\o_{t\we\cd})$) for all $\t\le \ch_Q$.
\end{lem}
\begin{proof}
By the definition of $u$, we have
\beaa
e^{-rt} u(\o_{t\we\cd}) 
&=& e^{-rt}\ol\cE^{\rm UVM}\Big[e^{-r \ch_{Q^{\o_{t\we\cd}}}} \xi\big(\o \otimes_t B_{\ch_{Q^{\o_{t\we\cd}}}\we\cdot}\big)\Big]\\
&=& e^{-rt}\ol\cE^{\rm UVM}\Big[e^{-r \big((\ch_{Q})^{t,\o}-t\big)} \big(\xi_{\ch_{Q}}\big)^{t,\o}\Big]\\
&=& \ol\cE^{\rm UVM}\Big[e^{-r (\ch_{Q})^{t,\o}} \big(\xi_{\ch_{Q}}\big)^{t,\o}\Big].
\eeaa
Then it follows the tower property (Lemma \ref{lem: tower property}) that
\beaa
u_0 = \ol\cE^{\rm UVM}\Big[e^{-r\ch_Q}\xi(B_{\ch_Q\we\cd})\Big]
 =  \ol\cE^{\rm UVM}\Big[  \ol\cE^{\rm UVM}\big[e^{-r (\ch_{Q})^{\t,\cd}} \big(\xi_{\ch_{Q}}\big)^{\t,\cd}\big] \Big] 
=  \ol\cE^{\rm UVM} [e^{-r\t} u_\t].
\eeaa
\end{proof}

\no {\bf Proof of Proposition \ref{prop: application}}\quad \underline{\it Step 1.}\q We first verify the viscosity supersolution property.  Without loss of generality, we only verify it at the point ${\bf 0}$. Recall the equivalent definition of viscosity solutions in Proposition \ref{prop:equiv-def}. Let $(\a,\b)\in\ol\cJ^L u({\bf 0})$, i.e.
$-u_0=\max_\t\ol\cE^L \big[(\psi^{\a,\b}-u)_{\ch_{\e}\we\t}\big]$, with $\ch_\e:=\e\we\ch_{O_\e}$. 
Then we have for all $\dbP\in\cP^{\rm UVM}\subset \cP^L$ and $h>0$ that
\beaa
0 ~ &\geq & ~ \dbE^{\dbP}\Big[\psi^{\a,\b}_{\ch_{\e}\we h}-u_{\ch_{\e}\we h}+u_0\Big] \\
&\geq & ~ \dbE^\dbP\Big[ \frac12 \b \langle B\rangle_{\ch_{\e}\we h} + \a B_{\ch_{\e}\we h} \Big]+\dbE^\dbP\Big[(e^{-r(\ch_{\e}\we h)}-1)u_{\ch_{\e}\we h}\Big]-\dbE^\dbP\Big[e^{-r(\ch_{\e}\we h)}u_{\ch_{\e}\we h}\Big]+u_0
\eeaa
It follows from Lemma \ref{lem:tower-uvm} that $u_0 = \ol\cE^{\rm UVM}\Big[e^{-r(\ch_{\e}\we h)}u_{\ch_{\e}\we h}\Big] \ge \dbE^\dbP\Big[e^{-r(\ch_{\e}\we h)}u_{\ch_{\e}\we h}\Big]$. Therefore
\beaa
0 &\geq &~ \dbE^\dbP\Big[\frac12 \b \langle B\rangle_{\ch_{\e}\we h}  +\a B_{\ch_{\e}\we h}\Big]+ \dbE^\dbP\Big[(e^{-r(\ch_{\e}\we h)}-1)u_{\ch_{\e}\we h}\Big].
\eeaa
Now, we take $\dbP_{\g}\in\cP^{\rm UVM}$ such that there exists a $\dbP_{\g}$-Brownian motion $W$ such that $B_t = \g W_t$, $\dbP_{\g}$-a.s. It follows that
\beaa
 0 ~\geq~ \frac{1}{h}\dbE^{\dbP_{\g}}\Big[ \frac12 \g^2 \b (\ch_{\e}\we h)+\Big(e^{-r(\ch_{\e}\we h)}-1\Big)u_{\ch_{\e}\we h}\Big].
\eeaa
Let $h\rightarrow 0$, we obtain that $0\geq -ru_0+\frac12 \g^2 \b$. Since $\g \in [\ul\si,\ol\si]$ can be arbitrary, we finally have
\beaa
ru_0-\sup_{\g \in [\ul\si,\ol\si]}\frac12 \g^2 \b ~\geq ~0.
\eeaa

\vspace{3mm}

\no \underline{\it Step 2.}\quad Now we verify the viscosity subsolution property. Without loss of generality, we only verity it at the point ${\bf 0}$. Let $(\a,\b) \in \ul\cJ^L u({\bf 0})$, i.e. $ -u_0=\min_\t\ul\cE^L \big[(\psi^{\a,\b}-u)_{\ch_{\e}\we\t}\big]$, with $\ch_\e:=\e\we\ch_{O_\e}$. For any $h>0$ we have
\beaa
0 ~\le ~\ul \cE^L \Big[\psi^{\a,\b}_{\ch_{\e}\we h}-u_{\ch_{\e}\we h}+u_0\Big].
\eeaa
So we have for all $\dbP\in \cP^{\rm UVM}\subset\cP^L$ that
\beaa
0  &\leq &  \dbE^\dbP\Big[ \frac12 \b \langle B\rangle_{\ch_{\e}\we h} \Big]+\dbE^\dbP\Big[(e^{-r(\ch_{\e}\we h)}-1)u_{\ch_{\e}\we h}\Big]-\dbE^\dbP\Big[e^{-r(\ch_{\e}\we h)}u_{\ch_{\e}\we h}\Big]+u_0\\
  &\le &  \dbE^{\dbP}\Big[\frac12   \sup_{\g \in [\ul\si,\ol\si]}\g^2 \b (\ch_{\e}\we h)  + (e^{-r(\ch_{\e}\we h)}-1)u_{\ch_{\e}\we h}\Big]-\dbE^\dbP\Big[e^{-r(\ch_{\e}\we h)}u_{\ch_{\e}\we h}\Big]+u_0.
\eeaa
Since $u_0 = \ol\cE^{\rm UVM}\Big[e^{-r(\ch_{\e}\we h)}u_{\ch_{\e}\we h}\Big]$ (Lemma \ref{lem:tower-uvm}), it follows that
\be\label{eq:uvmsub}
0 ~\le ~ \ol\cE^{\rm UVM}\Big[\frac12 \sup_{\g \in [\ul\si,\ol\si]}\g^2 \b (\ch_{\e}\we h)  + (e^{-r(\ch_{\e}\we h)}-1)u_{\ch_{\e}\we h}\Big].
\ee
Since we have
\beaa
\Big| \frac{e^{-r(\ch_{\e}\we h)}-1} {h}u_{\ch_{\e}\we h}  + r u_0\Big|
&\le & \Big| \frac{e^{-r(\ch_{\e}\we h)}-1} {h}  + r\Big| |u_{\ch_{\e}\we h}| + r|u_{\ch_{\e}\we h}-u_0|\\
&\le & C \Big| \frac{e^{-r(\ch_{\e}\we h)}-1} {h} + r\Big| + r \rho(\e).
\eeaa
where $\rho$ is a modulus of continuity of $u$. By denoting 
\beaa
\d(h) ~:=~ \sup_{0\le s\le h}\Big| \frac{e^{-rs}-1}{s}+r \Big|,
\eeaa
we have the following estimate:
\beaa
\Big| \frac{e^{-r(\ch_{\e}\we h)}-1} {h}u_{\ch_{\e}\we h} + r u_0\Big|
~\le ~ \Big(C\d (h) +r\rho(\e) \Big) 1_{\{\ch_{\e}>h\}}
+  \Big(C\big(r+\d (h)\big) +r\rho(\e) \Big) 1_{\{\ch_{\e}\le h\}}.
\eeaa
Together with \eqref{eq:uvmsub}, we obtain that
\beaa
0 &\le & \ol\cE^{\rm UVM}\Big[\frac12 \sup_{\g \in [\ul\si,\ol\si]}\g^2 \b \frac{\ch_{\e}\we h}{h}  -ru_0\Big] +
C\d (h) +r\rho(\e) + \Big(C\big(r+\d (h)\big) +r\rho(\e) \Big) \cC^{\cP^{\rm UVM}}\big[\ch_{\e}\le h \big]\\
&\le & \frac12 \sup_{\g \in [\ul\si,\ol\si]}\g^2 \b  -ru_0 +
C\d (h) +r\rho(\e) + \Big(C\big(r+\d (h)\big) +r\rho(\e) + \frac12 \ol\si^2 |\b| \Big) \cC^{\cP^{\rm UVM}}\big[\ch_{\e}\le h \big].
\eeaa
By letting $h\rightarrow 0$, we get $ru_0 -r\rho(\e)-\sup_{\g \in [\ul\si,\ol\si]}\frac12 \g^2 \b ~\leq ~0$.
Finally, by letting $\e\rightarrow 0$, we obtain
\beaa
ru_0 -\sup_{\g \in [\ul\si,\ol\si]}\frac12 \g^2 \b ~\leq ~0.
\eeaa
\qed

\section{Appendix}\label{sec: some proofs}

\no {\bf Proof of Proposition \ref{prop: c1 stopping}}\quad The first result is easy, and we omit its proof. We decompose the proof in two steps.

\ms
\no \underline{\it Step 1.}\q We first prove that $\ol\cE^L[\ch_D]<\infty$. Without loss of generality, we may assume that $D=O_r$. Denote by $B^1$ the first entry of $B$.  Since 
\beaa
\ch_{O_r}\leq \ch^1_r:=\inf\{t\geq 0:|B^1_t|\geq r\},
\eeaa
it is enough to show that $\ol\cE^L[\ch^1_r]<\infty$. Thus,
without loss of generality, we may assume that the dimension $d=1$.

We first consider the following Dirichlet problem of ODE:
\be\label{ODE 1dim}
-L|\pa_x u|-\frac{1}{L}\pa^2_{xx}u -1=0,\q u(r)=u(-r)=0.
\ee
It is easy to verify that Equation \eqref{ODE 1dim} has a classical solution:
\beaa
u(x) = \frac{1}{L^3}\Big(e^{L^2r}-e^{L^2x}\Big)-\frac{1}{L}(R-x)
~\mbox{for}~0\le x\le r,\q\mbox{and}~u(x)=u(-x)~\mbox{for}~-r\le x\le 0.
\eeaa
Further, it is clear that $u$ is concave, so $u$ is also a classical solution to the equation:
\be\label{nonlinearODE 1dim}
-L|\pa_x u|-\frac12\sup_{\frac{2}{L}\le\b\le 2L}\b\pa^2_{xx}u -1=0,\q u(r)=u(-r)=0.
\ee
Then by It\^o's formula, we obtain
\beaa
0=u(B_{\ch_{O_r}}) = u_0 + \int_0^{\ch_{O_r}}\pa_x u(B_t)dB_t+\frac12\int_0^{\ch_{O_r}}\pa^2_{xx}u(B_t)d\langle B\rangle_t.
\eeaa
Recalling the definition of $\dbQ^{\a,\b}$ in \eqref{defn:PL} and taking the expectation on both sides, we have
\be\label{ito on u}
0=
u_0 + \dbE^{\dbQ^{\a,\b}}\Big[\int_0^{\ch_{O_r}}\big(\a_t\pa_x u(B_t)+\frac12 \b^2_t\pa_{xx}^2u(B_t)\big)dt\Big] 
\q\mbox{for all}\q 
\|\a\|\le L,\frac{2}{L}\leq \b_\cd\leq 2L
\ee
Since $u$ is a solution of Equation \eqref{nonlinearODE 1dim}, we have
\beaa
\dbE^{\dbQ^{\a,\b}}\Big[\int_0^{\ch_{O_r}}\big(\a_t\pa_x u(B_t)+\frac12 \b^2_t\pa_{xx}^2u(B_t)\big)dt\Big] 
\leq
-\dbE^{\dbQ^{\a,\b}}[\ch_{O_r}]
\eeaa
Hence $u_0\geq \ol\cE^L[\ch_{O_r}]$. On the other hand, taking $\a^*:=L{\rm sgn}\big(\pa_x u(B_t)\big)$ and $\b^*:=\sqrt{\frac{2}{L}}$, we obtain from \eqref{nonlinearODE 1dim} and \eqref{ito on u} that
$$u_0 = \dbE^{\dbQ^{\a^*,\b^*}}[\ch_{O_r}].$$
So, we have proved that $u_0 = \ol\cE^L[\ch_{O_r}]$. Consequently, $\ol\cE^L[\ch_{O_r}]<\infty$.

\ms
\no \underline{\it Step 2.}\q Note that
\[
\mathcal{C}^{L}\left[\ch_{D}\geq T\right]\leq\frac{\mathcal{\overline{E}}^{L}\left[\ch_{D}\right]}{T}.
\]
By the result of Step 1, we have $\mathcal{C}^{L}\left[\ch_{D}\geq T\right]\leq\frac{C}{T}$, and then $\lim_{T\rightarrow\infty}\mathcal{C}^{L}\left[\ch_{D}\geq T\right]=0$.
Further,
\bea\label{app-seq}
\mathcal{C}^{L}\left[\ch_{n}<\ch_{D}\right] & \leq & \mathcal{C}^{L}\left[\ch_{n}<\ch_{D};\ch_{D}\leq T\right]+\mathcal{C}^{L}\left[\ch_{n}<\ch_{D};\ch_{D}>T\right] \notag \\
 & \leq & \mathcal{C}^{L}\left[\ch_{n}<T\right]+\mathcal{C}^{L}\left[\ch_{D}>T\right].
\eea
We conclude that $\lim_{n\rightarrow\infty}\mathcal{C}^{L}\left[\ch_{n}<\ch_{D}\right]=0$.

\no Further, define $\hat{D}:=\cup_{x\in D}D^{x}.$ Note that
$\ch_{D}^{x}~\leq ~ \ch_{\hat{D}}$ for all $x\in D$.
Hence we have
\beaa
\sup_{x\in D}\mathcal{C}^{L}\left[\ch_{D}^x \geq T\right]
&\leq &
\mathcal{C}^{L}\left[\ch_{\hat{D}}\geq T\right]\rightarrow 0.
\eeaa
Together with \eqref{app-seq}, we obtain
$\lim_{n\rightarrow\infty}\sup_{x\in D}\mathcal{C}^{L}\left[\ch_{n}<\ch^x_{D}\right]=0$.
\qed

\vspace{3mm}

\no {\bf Proof of Lemma \ref{lem: comp on ball}}\quad
For simplicity, denote
\beaa
&\q g^i:=G(\o^i,\cd,\cd,\cd)~ (i=1,2), \q c_0:=\rho\big(d^e(\o^1,\o^2)\big)~ (\ge |g^1-g^2|),&\\
& {\bf L}^i u := -g^i (u, \pa_x u, \pa^2_{xx} u) ~(i=1,2), \q\q\mbox{and}\q \d h:=h^1-h^2 . &
\eeaa
By standard argument, one can easily verify that function 
$$w(x):=\overline{\mathcal{E}}^{L_{0}}\Big[\delta h^{+}(x+B_{\ch^x_{D}})+c_{0}\ch_{D}^{x} \Big]$$
is a C-L viscosity solution of the nonlinear PDE:
\[
-c_{0}-L_{0}|\pa_x w|-\frac{1}{2}\sup_{\sqrt{\frac{2}{L_{0}}}I_{d}\leq\g\leq\sqrt{2L_{0}}I_{d}}\g^{2}:\pa_{xx}^{2}w=0
~\mbox{on}~D,
\q\mbox{and}~w=(\delta h)^{+}\text{ on }\partial D.
\]
Let $K$ be a smooth nonnegative kernel with unit total mass. For
all $\eta>0$, we define the mollification $w^{\eta}:=w*K^{\eta}$
of $w$. Then $w^{\eta}$ is smooth, and it follows from a convexity argument as in \cite{Krylov} that $w^{\eta}$ is a classic supersolution of
\be\label{eq: w eta}
-c_{0}-L_{0}|\pa_x w^{\eta}|-\frac{1}{2}\sup_{\sqrt{\frac{2}{L_{0}}}I_{d}\leq \g \leq\sqrt{2L_{0}}I_{d}}\g^{2}:\pa_{xx}^{2} w^{\eta} \geq 0 \text{ on }D,
\q\text{and}\ w^{\eta}=(\delta h)^{+}*K^{\eta}\text{ on }\partial D.
\ee
We claim that
\[
\bar{w}^{\eta}+v^{2}\ \text{is a C-L viscosity supersolution to the PDE with generator}\ g^{1},
\]
where $\bar{w}^{\eta}:=w^{\eta}+\d$, with $\d:=\max_{x\in \pa D}|w^{\eta}(x)-(\delta h)^{+}(x)|$.
Then we note that 
\beaa
\bar{w}^{\eta}+v^{2}\geq w^{\eta}+h^{2}+\d \geq h^{1}=v^{1}\q\mbox{on}~~\partial D.
\eeaa
 By comparison principle for the C-L viscosity solutions of PDE's, we have $\bar{w}^{\eta}+v^{2}\geq v^{1}$
on $\mathrm{cl}(D)$. Setting $\eta\rightarrow0$, we obtain that
$v^1-v^2\leq w$. The desired result follows.

It remains to prove that $\bar{w}^\eta+v^{2}$ is a C-L viscosity supersolution of the
PDE with generator $g^{1}$. Let $x_{0}\in D$, $\phi\in C^{2}(D)$
be such that $0=(\phi-\bar{w}^{\eta}-v^{2})(x_{0})=\max\left(\phi-\bar{w}^{\eta}-v^{2}\right)$.
Then, it follows from the viscosity supersolution property of $v^{2}$
that ${\bf L}^{2}(\phi-\bar{w}^{\eta})(x_{0})\geq0$. Hence,
at the point $x_{0}$, by (\ref{eq: w eta}) we have
\begin{eqnarray*}
{\bf L}^{1}\phi & \geq & {\bf L}^{1}\phi-{\bf L}^{2}(\phi-\bar{w}^{\eta})\\
 & = & -g^{1}(\phi,\pa_x \phi,\pa_{xx}^{2}\phi)+g^{2}(\phi-\bar{w}^{\eta},\pa_x (\phi-\bar{w}^{\eta}),\pa_{xx}^{2}(\phi-\bar{w}^{\eta}))\\
 & \geq & -g^{1}(\phi,\pa_x \phi, \pa_{xx}^{2}\phi)+g^{2}(\phi,\pa_x (\phi-\bar{w}^{\eta}),\pa_{xx}^{2}(\phi-\bar{w}^{\eta}))\\
 & \geq & -c_{0} - L_{0}|\pa_x w^{\eta}| - \frac{1}{2}\sup_{\sqrt{\frac{2}{L_{0}}}I_{d}\leq \g \leq\sqrt{2L_{0}}I_{d}}\g^{2}:\pa_{xx}^{2}w^{\eta}\\
 & \geq & 0,
\end{eqnarray*}
where the last inequality is due to \eqref{eq: w eta}.
\qed

\begin{prop}\label{prop: estimate hitting}
For all $n\geq 1$, there exists a modulus of continuity $\rho$ such that
\beaa
\mathcal{\overline{E}}^{L}\left[|\ch_{Q}^{x_1}-\ch_{Q}^{x_{2}}|\right]\leq\rho\left(|x_1-x_2|\right).
\eeaa
\end{prop}
\proof
By the tower property, we have
\beaa
\ol\cE^L\Big[|\ch^{x_1}_Q-\ch^{x_2}_Q|\Big]
& \le & 
\ol\cE^L\Big[|\ch^{x_1}_Q-\ch^{x_2}_Q| 1_{\{\ch^{x_1}_Q\le\ch^{x_2}_Q\}}\Big]+
\ol\cE^L\Big[|\ch^{x_1}_Q-\ch^{x_2}_Q| 1_{\{\ch^{x_1}_Q>\ch^{x_2}_Q\}}\Big]\\
&\le &
\ol\cE^L\Big[\ol\cE^L\Big[\ch_Q^{x_2  +B_{\ch_Q^{x_1}}}\Big] 1_{\{\ch^{x_1}_Q\le\ch^{x_2}_Q\}}\Big]
+
\ol\cE^L\Big[\ol\cE^L\Big[\ch_Q^{x_1+ B_{\ch_Q^{x_2}}}\Big] 1_{\{\ch^{x_1}_Q>\ch^{x_2}_Q\}}\Big].
\eeaa
So, it suffices to show that there exists a modulus of continuity  $\rho$ such that
\beaa
\ol\cE^L\Big[\ch_Q^{x_2 + \o'_{\ch_Q^{x_1}}}\Big]
~\leq~
\rho\Big(|x_1-x_2|\Big),
\q \mbox{for all}~\o'~\mbox{such that}~\ch^{x_1}_Q(\o')\leq \ch^{x_2}_Q(\o').
\eeaa
Denote $y_i := x_i + \o'_{\ch^{\o^1}_Q}$ for $i=1,2$. Note that
$$|y_1-y_2|=|x_1-x_2|,\q
y_1\in\pa Q,~y_2\in Q.
$$

In the case of the dimension $d=1$, we may assume that $Q=[0,h]$ for some $h>0$. Next, consider the Dirichlet problem of ODE:
\be\label{ODE again}
-L|\pa_x u|-\frac12\sup_{\frac{2}{L}\le\b\le 2L}\b\pa^2_{xx}u-1=0
\q\mbox{and}\q
u(-\frac{h}{2})=u(\frac{h}{2})=0
\ee
Then, as in the proof of Proposition \ref{prop: c1 stopping}} above, we can prove that Equation \eqref{ODE again} has a classical solution $u$ and 
$$\ol\cE^L \big[\ch_{Q}^{y_2}\big]
~=~
u\Big(\frac{h}{2}-|x_1-x_2|\Big)
~=~ u\Big(\frac{h}{2}-|x_1-x_2|\Big) - u\big(\frac{h}{2}\big)
~\leq~
\rho\big(|x_1-x_2|\big),$$
where $\rho$ is the modulus of continuity of $u$.

In the case $d>1$, we need the following discussion. Since $Q$ is bounded and convex, there exists a $d$-dimensional open cube $\widehat Q$ such that 
$Q\subset \widehat Q$, $d(y_2,\pa\widehat Q) \le |y_1-y_2| = |x_1-x_2|$ and there is a unique point 
$y^* \in \pa \widehat Q$ such that  $d(y_2,\pa\widehat Q) = |y_2-y^*|$.
Since $\ch^{y_2}_{Q}\le \ch^{y_2}_{\widehat Q}$, it is enough to prove
\be\label{enoughwidehat}
\ol\cE^L\Big[\ch_{\widehat Q}^{y_2}\Big]
~\leq~
\rho\Big(|x_1-x_2|\Big).
\ee
Denote the unit vector $e^* := \frac{y^*-y_2}{|y^*-y_2|} $. Note that
\be\label{lineincube}
y_2 + |y^*-y_2| e^* \in \pa \widehat Q
\q\mbox{and there is a constant $\ell>0$ such that}~
y_2 - \ell e^* \in \pa \widehat Q
\ee
Denote a new stopping time
\beaa
\ch^*:= \inf \{t\ge 0: B\cd e^* \notin (-\ell, |y^*-y_2|) \}.
\eeaa
Since $\widehat Q$ is a cube, it follows from \eqref{lineincube} that $\ch_{\widehat Q}^{y_2}\le \ch^*$. Since $B\cd e^*$ takes values in $\dbR^1$, it follows from the previous result in the case $d=1$ that
\beaa
\ol\cE^L\big[\ch^*\big] ~\le ~ \rho \big(|y^*-y_2|\big) 
~\le  ~\rho \big(|x_1-x_2|\big),\q\mbox{for some modulus of continuity}~\rho.
\eeaa 
 Together with the fact  $\ch_{\widehat Q}^{y_2}\le \ch^*$, we finally obtain \eqref{enoughwidehat}.
 \qed

\end{document}